
\pdfoutput=1

\documentclass[a4paper,reqno,final]{amsart}

\usepackage[T1]{fontenc}
\usepackage[utf8]{inputenc}
\usepackage[english]{babel}
\usepackage{ifthen}
\usepackage{enumitem}
\usepackage{dsfont}
\usepackage{mathtools}
\usepackage{extarrows}
\usepackage[babel]{csquotes}
\usepackage[backend=biber,style=alphabetic,bibencoding=utf8]{biblatex}
\usepackage[protrusion=true,expansion=true,babel=true,final]{microtype}
\usepackage[unicode,bookmarksopen]{hyperref}

\newcommand{\IZ}{\mathbb{Z}}

\newcommand{\IN}{\mathbb{N}}
\newcommand{\IR}{\mathbb{R}}
\newcommand{\IC}{\mathbb{C}}

\newcommand{\IP}{\mathbb{P}}
\newcommand{\IT}{\mathbb{T}}
\newcommand{\abs}[1]{\left\lvert#1\right\rvert}
\newcommand{\normalabs}[1]{\lvert#1\rvert}

\newcommand{\norm}[1]{\left\lVert#1\right\rVert}
\newcommand{\normalnorm}[1]{\lVert#1\rVert}
\newcommand{\biggnorm}[1]{\biggl\lVert#1\biggr\rVert}

\newcommand{\R}[2][\empty]{
	\ifthenelse{\equal{#1}{\empty}}
		{\mathcal{R}\left\{#2\right\}}
		{\mathcal{R}_{#1}\left\{#2\right\}}
}

\renewcommand{\Re}{\operatorname{Re}}
\renewcommand{\Im}{\operatorname{Im}}
\renewcommand{\epsilon}{\varepsilon}
\renewcommand{\phi}{\varphi}

\DeclareMathOperator{\linspan}{span}

\DeclareMathOperator{\sign}{sign}

\DeclareMathOperator{\Rad}{Rad}

\DeclareMathOperator{\Ind}{\mathds{1}}

\newtheorem{lemma}{Lemma}[section]
\newtheorem{corollary}[lemma]{Corollary}
\newtheorem{proposition}[lemma]{Proposition}
\newtheorem{theorem}[lemma]{Theorem}
\theoremstyle{definition}
\newtheorem{definition}[lemma]{Definition}
\newtheorem{problem}{Problem}
\newtheorem{example}[lemma]{Example}
\newtheorem{remark}[lemma]{Remark}
\newtheorem{convention}[lemma]{Convention}

\newlist{thm_enum}{enumerate}{1}
\setlist[thm_enum]{label=\normalfont(\alph*)}
\newlist{def_enum}{enumerate}{1}
\setlist[def_enum]{label=\normalfont(\roman*)}
\newlist{equiv_enum}{enumerate}{1}
\setlist[equiv_enum]{label=\normalfont(\roman*)}

\numberwithin{equation}{section}

\addbibresource{Literatursammlung.bib}

\DeclareSourcemap{
  \maps[datatype=bibtex, overwrite]{
    \map{
      \step[fieldset=abstract, null]
      \step[fieldsource=entrykey,match=LeM07,final] %
      \step[fieldset=shorthand,fieldvalue=LeM07] %
    }
    \map{
      \step[fieldsource=entrykey,match=Mer99,final] %
      \step[fieldset=shorthand,fieldvalue=LeM99] %
    }
    \map{
      \step[fieldsource=entrykey,match=Mer98,final] %
      \step[fieldset=shorthand,fieldvalue=LeM98] %
    }
  }
}

\renewbibmacro{publisher+location+date}{%
	\printlist{publisher}
	\setunit*{\addcomma\space}
	\printlist{location}
  	\setunit*{\addcomma\space}
  	\usebibmacro{date}
\newunit} 

\newbibmacro{string+doi+url}[1]{%
	\iffieldundef{doi}{%
			\iffieldundef{url}{#1}{\href{\thefield{url}}{#1}}%
		}%
		{\href{http://dx.doi.org/\thefield{doi}}{#1}}
}%

\renewbibmacro{in:}{}

\ExecuteBibliographyOptions{doi=false,isbn=false,url=false}

\DeclareFieldFormat*{title}{\usebibmacro{string+doi+url}{\mkbibemph{#1}}}
\DeclareFieldFormat*{booktitle}{#1}
\DeclareFieldFormat[article]{volume}{\textbf{#1}\addcomma}
\DeclareFieldFormat[article]{number}{\addspace no.~#1}
\DeclareFieldFormat[article]{pages}{#1}
\DeclareFieldFormat[book,incollection]{volume}{}
\DeclareFieldFormat[book,incollection]{series}{#1\addcomma\space vol.~\thefield{volume}}
\DeclareFieldFormat{journaltitle}{#1} 
\DeclareFieldFormat{url}{}
\DeclareFieldFormat{eprint}{arXiv: \href{http://arxiv.org/abs/#1}{#1}}

\title[Regularity Properties of Sectorial Operators]{Regularity Properties of Sectorial Operators: Counterexamples and Open Problems}
\author{Stephan Fackler}
\address{Institute of Applied Analysis, University of Ulm, Helmholtzstr. 18, 89069 Ulm}
\email{stephan.fackler@uni-ulm.de}

\begin{document}
	\begin{abstract}
		We give a survey on the different regularity properties of sectorial operators on Banach spaces. We present the main results and open questions in the theory and then concentrate on the known methods to construct various counterexamples.
	\end{abstract}
	
	\dedicatory{Dedicated to Professor Charles Batty on the occasion of his $60^{\text{th}}$ birthday}

	\maketitle
	
	\section{Introduction}
	
		By now sectorial operators play a central role in the study of abstract evolution equations. Moreover, in the past decades certain sectorial operators with additional properties have become important both from the point of view of operator theory and partial differential equations. We call these additional properties \emph{regularity properties} of sectorial operators. Very important examples are the boundedness of the $H^{\infty}$-calculus or the imaginary powers, $\mathcal{R}$-sectoriality and -- in the case that the sectorial operator generates a semigroup -- the property of having a dilation to a group. This survey is intended as a quick guide to these properties and the main results and open questions in this area. A particular emphasis is thereby given to the presentation of various methods to construct counterexamples.
		
		In the first section we introduce all aforementioned properties and list the main results. In particular we will see that on $L_p$ for $p \in (1, \infty)$ and on more general Banach spaces the following implications hold:
			\[ \text{loose dilation} \quad \Rightarrow \quad \text{bounded } \text{$H^{\infty}$-calculus} \quad \Rightarrow \quad \text{BIP} \quad \Rightarrow \quad \mathcal{R}\text{-sectorial} \]
		and all of them imply sectoriality by their mere definitions. Our main goal in the sections thereafter is to give explicit counterexamples which show that for each of the above properties the converse implication $\Leftarrow$ does not hold. We present different approaches to construct such counterexamples. The first one is well-known and the most far-reaching and uses Schauder multipliers. In~\cite{Fac13} and~\cite{Fac14} this approach has been developed further to give the first explicit example of a sectorial operator on $L_p$ which is not $\mathcal{R}$-sectorial. 		
		The second approach uses a theorem of S.~Monniaux to give examples of sectorial operators with BIP which do not have a bounded $H^{\infty}$-calculus. Finally, we study the regularity properties on exotic Banach spaces and show how Pisier's counterexample to the Halmos problem can be used to give an example of a sectorial operator with a bounded $H^{\infty}(\Sigma_{\frac{\pi}{2}+})$-calculus which does not have a dilation. Moreover, we meet and motivate open problems in the theory and formulate them separately whenever they arise.
	\section{Main Definitions and Fundamental Results}
	
	In this section we give the definitions of the regularity properties to be considered later. Further, we present the main results for these regularity classes. Our leitmotif is to present all results in the most general form that does not involve the introduction of new concepts apart from the main ones. We hope that this allows the reader to see the main ideas clearly without getting himself lost in details. For further information we refer to~\cite{KunWei04}, \cite{DHP03} and~\cite{Haa06}. Furthermore we make the following convention.
	
		\begin{convention}
			All Banach spaces are assumed to be complex.
		\end{convention}
	
		\subsection{Sectorial Operators}
		
			We begin our journey with sectorial operators. For $\omega \in (0,\pi)$ we denote by
				\[ \Sigma_{\omega} \coloneqq \{ z \in \IC \setminus \{ 0 \}: \abs{\arg(z)} < \omega \} \]
			the open sector in the complex plane with opening angle $\omega$, where our convention is that $\arg z \in (-\pi,\pi]$.
		
			\begin{definition}[Sectorial Operator]\index{sectorial operator} A closed densely defined operator $A$ with dense range on a Banach space $X$ is called \emph{sectorial} if there exists an $\omega \in (0, \pi)$ such that
				\begin{equation*}
					\label{sectorial}
					\tag{$S_{\omega}$}
					\sigma(A) \subset \overline{\Sigma_{\omega}} \qquad \text{and} \qquad \sup_{\lambda \not\in \overline{\Sigma_{\omega + \epsilon}}} \norm{\lambda R(\lambda, A)} < \infty \quad \forall \epsilon > 0.
				\end{equation*}
				One defines the \emph{sectorial angle of $A$} as $\omega(A) \coloneqq \inf \{ \omega: \text{\eqref{sectorial} holds} \}$.		
			\end{definition}

			\begin{remark}
				The above definition automatically implies that $A$ is injective. The definition of sectorial operators varies in the literature. Some authors do not require a sectorial operator to be injective or to have dense range. Others even omit the density of the domain. We give this strict definition to reduce technical difficulties when dealing with bounded imaginary powers and bounded $H^{\infty}$-calculus. For a very general treatment avoiding unnecessary restrictions in the development as far as possible see the monograph~\cite{Haa06}.			
			\end{remark}
			
		\subsection{\texorpdfstring{$\mathcal{R}$}{R}-Sectorial Operators}
		
			In the study of $L_p$-maximal parabolic regularity culminating in the work~\cite{Wei01} an equivalent characterization of maximal $L_p$-regularity in terms of a stronger sectoriality condition has become very useful both for theory and applications. This condition is called $\mathcal{R}$-sectoriality. We will exclusively treat this condition from an operator theoretic point of view and refer to~\cite{KunWei04} and~\cite{DHP03} for the connection with non-linear parabolic partial differential equations.

			 Let $r_k(t) \coloneqq \sign \sin (2^k \pi t)$ be the $k$-th \emph{Rademacher function}. Then on the probability space $([0,1], \mathcal{B}([0,1]), \lambda)$, where $\mathcal{B}([0,1])$ is the Borel $\sigma$-algebra on $[0,1]$ and $\lambda$ denotes the Lebesgue measure, the Rademacher functions form an independent identically distributed family of random variables satisfying $\IP(r_k = \pm 1) = \frac{1}{2}$.
				
			\begin{definition}[$\mathcal{R}$-Boundedness]
				A family of operators $\mathcal{T} \subseteq \mathcal{B}(X)$ on a Banach space $X$ is called \emph{$\mathcal{R}$-bounded} if for one $p \in [1, \infty)$ (equiv.\ all $p \in [1, \infty)$ by the Khintchine inequality) there exists a finite constant $C_p \ge 0$ such that for each finite subset $\{T_1, \ldots, T_n \}$ of $\mathcal{T}$ and arbitrary $x_1, \ldots, x_n \in X$ one has
				\begin{equation} 
					\biggnorm{\sum_{k = 1}^n r_k T_k x_k}_{L_p([0,1]; X)} \le C_p \biggnorm{\sum_{k=1}^n r_k x_k}_{L_p([0,1]; X)}. \label{eq:R-ineq}
				\end{equation}
				The best constant $C_p$ such that \eqref{eq:R-ineq} holds is called the \emph{$\mathcal{R}$-bound} of $\mathcal{T}$ and is denoted (for an implicitely fixed $p$) by $\mathcal{R}(\mathcal{T})$.
			\end{definition} 
				
			Furthermore we denote by $\Rad(X)$ the closed span of the functions of the form $\sum_{k=1}^n r_k x_k$ in $L_1([0,1];X)$. The $\mathcal{R}$-bound behaves in many ways similar to a classical norm. For example, if $\mathcal{S}$ is a second family of operators, one sees that (if the operations make sense)
				\[ \mathcal{R}(\mathcal{T} + \mathcal{S}) \le \mathcal{R}(\mathcal{T}) + \mathcal{R}(\mathcal{S}), \qquad \mathcal{R}(\mathcal{TS}) \le \mathcal{R}(\mathcal{T})\mathcal{R}(\mathcal{S}). \]
			Note that by the orthogonality of the Rademacher functions in $L_2([0,1])$ a family $\mathcal{T} \subseteq \mathcal{B}(H)$ for some Hilbert space $H$ is $\mathcal{R}$-bounded if and only if $\mathcal{T}$ is bounded in operator norm. In fact, an $\mathcal{R}$-bounded subset $\mathcal{T} \subseteq \mathcal{B}(X)$ for a Banach space $X$ is clearly always norm-bounded and one can show that the converse holds if and only if $X$ is isomorphic to a Hilbert space~\cite[Proposition~1.13]{AreBu02}.
			
			Now, if one replaces norm-boundedness by $\mathcal{R}$-boundedness, one obtains the definition of an $\mathcal{R}$-sectorial operator.
			
			\begin{definition}[$\mathcal{R}$-Sectorial Operator]\index{$\mathcal{R}$-sectorial operator}
				A sectorial operator on a Banach space $X$ is called \emph{$\mathcal{R}$-sectorial }if for some $\omega \in (\omega(A), \pi)$ one has
					\begin{equation*}
						\mathcal{R} \{ \lambda R(\lambda,A): \lambda \not\in \overline{\Sigma_{\omega}} \} < \infty. \label{R-sectorial}\tag{$\mathcal{R}_{\omega}$}
					\end{equation*}
				One defines the \emph{$\mathcal{R}$-sectorial angle} of $A$ as $\omega_R(A) \coloneqq \inf\{ \omega: \text{\eqref{R-sectorial} holds} \}$. If $A$ is not $\mathcal{R}$-sectorial, we set $\omega_R(A) \coloneqq \infty$.
			\end{definition}
			
			By definition one has $\omega(A) \le \omega_{R}(A)$. In Hilbert spaces an operator is sectorial if and only if it is $\mathcal{R}$-sectorial. In this case the equality $\omega(A) = \omega_R(A)$ holds. There are examples of sectorial operators $A$ on Banach spaces for which one has the strict inequalities $\omega(A) < \omega_R(A) < \infty$. For this see the examples cited in Section~\ref{sec:hinfty} and use the fact that $\omega_R(A) = \omega_{H^{\infty}}(A)$ on UMD-spaces. However, the following problem seems to be open.
			
			\begin{problem}
				Let $A$ be an $\mathcal{R}$-sectorial operator on $L_p$ for $p \in (1,\infty)$. Does one have $\omega(A) = \omega_R(A)$ (if $A$ generates a positive / contractive / positive contractive analytic $C_0$-semigroup)?
			\end{problem}
			
			In general Banach spaces $\mathcal{R}$-sectorial operators clearly are sectorial, the converse question whether every sectorial operator is $\mathcal{R}$-sectorial will be explicitly answered negatively in Theorem~\ref{thm:counterexample_mrp_lp}.
		
		\subsection{Bounded \texorpdfstring{$H^{\infty}$-}{Holomorphic }Calculus for Sectorial Operators}\label{sec:hinfty}
		
			In complete analogy to the Dunford functional calculus for bounded operators one can define a holomorphic functional calculus for sectorial operators. This goes back to the work~\cite{McI86} in the Hilbert space case and to~\cite{CDMY96} in the Banach space case. We start by introducing the necessary function spaces.
		
			\begin{definition}
				For $\sigma \in (0, \pi)$ we define 
					\begin{align*} 
						& H_0^{\infty}(\Sigma_{\sigma}) \coloneqq \left\{ f: \Sigma_{\sigma} \to \IC \text{ analytic}: \abs{f(\lambda)} \le C \frac{\abs{\lambda}^{\epsilon}}{(1 + \abs{\lambda})^{2\epsilon}} \text{ on } \Sigma_{\sigma} \text{ for } C, \epsilon > 0 \right \}, \\
						& H^{\infty}(\Sigma_{\sigma}) \coloneqq \{ f: \Sigma_{\sigma} \to \IC \text{ analytic and bounded} \}.
					\end{align*}
			\end{definition}
				
			Now let $A$ be a sectorial operator on a Banach space $X$ and $\sigma > \omega(A)$. Then for $f \in H_0^{\infty}(\Sigma_{\sigma})$ one can define
				\[ f(A) = \frac{1}{2\pi i} \int_{\partial \Sigma_{\sigma'}} f(\lambda) R(\lambda, A) \, d\lambda \qquad (\omega(A) < \sigma' < \sigma).  \]
			This is well-defined by the growth estimate on $f$ and by the invariance of the contour integral and induces an algebra homomorphism $H^{\infty}_0(\Sigma_{\sigma}) \to \mathcal{B}(X)$. 
			
			One can show that this homomorphism can be extended to a bounded homomorphism on $H^{\infty}(\Sigma_{\sigma})$ satisfying a continuity property similar to the one in Lebesgue's dominated convergence theorem if and only if the homomorphism $H_0^{\infty}(\Sigma_{\sigma}) \to \mathcal{B}(X)$ is bounded. This leads us to the next definition.
						
			\begin{definition}[Bounded $H^{\infty}$-calculus]\index{$H^{\infty}$-Calculus} A sectorial operator $A$ is said to have a \emph{bounded $H^{\infty}(\Sigma_{\sigma})$-calculus} for some $\sigma \in (\omega(A), \pi)$ if the homomorphism $f \mapsto f(A)$ from $H_0^{\infty}(\Sigma_{\sigma})$ to $\mathcal{B}(X)$ is bounded. The infimum of the $\sigma$ for which this homomorphism is bounded is denoted by $\omega_{H^{\infty}}(A)$. We say that $A$ has a \emph{bounded $H^{\infty}$-calculus} if $A$ has a bounded $H^{\infty}(\Sigma_{\sigma})$-calculus for some $\sigma \in (0, \pi)$. If $A$ does not have a bounded $H^{\infty}$-calculus, we let $\omega_{H^{\infty}}(A) \coloneqq \infty$.
			\end{definition}
			
			One can extend the functional calculus to the broader class of holomorphic functions on $\Sigma_{\sigma}$ with polynomial growth~\cite[Appendix~B]{KunWei04}. Of course, the so obtained operators cannot be bounded in general. Note that it follows directly from the definition that one always has $\omega(A) \le \omega_{H^{\infty}}(A)$ for a sectorial operator $A$. Moreover, there exist examples of sectorial operators $A$ for which the strict inequalities $\omega(A) < \omega_{H^{\infty}}(A) < \infty$ hold: in~\cite{Kal03} N.J.~Kalton gives an example on a uniformly convex space and in the unpublished manuscript~\cite{KalWei????2} there is an example on a subspace of an $L_p$-space by the same author.
			
			There is a close connection to $\mathcal{R}$-boundedness and $\mathcal{R}$-sectoriality. A Banach space $X$ is said to have \emph{Pisier's property $(\alpha)$} (as introduced in~\cite{Pis78}) if there is a constant $C \ge 0$ such that for all $n \in \IN$, all $n \times n$-matrices $[x_{ij}] \in M_n(X)$ of elements in $X$ and all choices of scalars $[\alpha_{ij}]\in M_n(\IC)$ one has
				\[
					\int_{[0,1]^2} \biggnorm{\sum_{i,j=1}^n \alpha_{ij} r_i(s) r_j(t) x_{ij}} \, ds \, dt \le C \sup_{i,j} \abs{\alpha_{ij}} \int_{[0,1]^2} \biggnorm{\sum_{i,j=1}^n r_i(s) r_j(t) x_{ij}} \, ds \, dt.
				\]
			We remark that $L_p$-spaces have Pisier's property $(\alpha)$ for $p \in (1, \infty)$. A proof of the following theorem can be found in~\cite[Theorem~12.8]{KunWei04}.
					
			\begin{theorem}\label{thm:h_infty_generates_R_bounded_sets}
				Let $X$ be a Banach space with Pisier's property $(\alpha)$ and $A$ a sectorial operator on $X$ with a bounded $H^{\infty}(\Sigma_{\sigma})$-calculus for some $\sigma \in (0, \pi)$. Then for all $\sigma' \in (\sigma, \pi)$ and all $C \ge 0$ the set
					\[ \{ f(A): \norm{f}_{H^{\infty}(\Sigma_{\sigma'})} \le C \} \]
				is $\mathcal{R}$-bounded.
			\end{theorem}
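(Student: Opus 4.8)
My plan is to recast $\mathcal{R}$-boundedness as the uniform boundedness of diagonal operators and then to recognise the resulting bound as an instance of an operator-valued functional calculus. After normalising $C = 1$, recall that a family $\mathcal{T} \subseteq \mathcal{B}(X)$ is $\mathcal{R}$-bounded with $\mathcal{R}(\mathcal{T}) \le K$ precisely when, for every $n$ and every choice $T_1, \dots, T_n \in \mathcal{T}$, the diagonal operator
\[ \diag(T_1, \dots, T_n)\colon \sum_{k=1}^n r_k x_k \longmapsto \sum_{k=1}^n r_k T_k x_k \]
is bounded on $\Rad(X)$ with norm at most $K$. Thus it suffices to bound, uniformly in $n$ and in the $f_k$ with $\norm{f_k}_{H^{\infty}(\Sigma_{\sigma'})} \le 1$, the operators $\diag(f_1(A), \dots, f_n(A))$ on $\Rad(X)$.

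The point is that each such diagonal operator is a \emph{single} functional calculus expression on $\Rad(X)$. Let $B$ be the pointwise extension of $A$ to $\Rad(X)$, i.e.\ $B(\sum_k r_k x_k) = \sum_k r_k A x_k$; since $g(B) = \diag(g(A), \dots, g(A))$ for scalar $g$ and $\norm{g(A)}_{\mathcal{B}(X)} = \norm{g(B)}_{\mathcal{B}(\Rad(X))}$, the operator $B$ is again sectorial and inherits the bounded $H^{\infty}(\Sigma_{\sigma})$-calculus of $A$ with the same bound. Now set $F(\lambda) \coloneqq \diag(f_1(\lambda), \dots, f_n(\lambda)) \in \mathcal{B}(\Rad(X))$; this is a bounded holomorphic $\mathcal{B}(\Rad(X))$-valued function on $\Sigma_{\sigma'}$ commuting with the resolvents of $B$, and $F(B) = \diag(f_1(A), \dots, f_n(A))$ is exactly the operator we must control. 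The decisive observation is that property $(\alpha)$ is precisely the statement that the diagonal scalar multipliers with entries in the unit disc form an $\mathcal{R}$-bounded subset of $\mathcal{B}(\Rad(X))$: unravelling the double Rademacher average $r_i(s) r_j(t)$ in the defining inequality shows that $\mathcal{R}\{F(\lambda) : \lambda \in \Sigma_{\sigma'}\}$ is bounded by the property-$(\alpha)$ constant of $X$, uniformly in $n$ and in the $f_k$.

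It then remains to feed this into the Kalton--Weis operator-valued $H^{\infty}$-calculus: when $B$ has a bounded scalar $H^{\infty}(\Sigma_{\sigma})$-calculus and the commuting symbol $F$ has $\mathcal{R}$-bounded range — so that, by the Cauchy estimates, $\{\lambda F'(\lambda) : \lambda \in \Sigma_{\sigma''}\}$ is $\mathcal{R}$-bounded as well for any $\sigma < \sigma'' < \sigma'$, once more by property $(\alpha)$ — one obtains $\norm{F(B)}_{\mathcal{B}(\Rad(X))} \le C \cdot \mathcal{R}\{F(\lambda)\}$ with $C$ depending only on $A$ and the angles. Combining this with the bound of the previous paragraph controls $\diag(f_1(A), \dots, f_n(A))$ uniformly in $n$ and in the $f_k$, which is the assertion. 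I expect the genuine obstacle to lie in the operator-valued calculus step, whose proof is the technical heart: one represents $F(B)$ through a Calderón reproducing formula $x = \int_0^{\infty} \psi(tB)\theta(tB) x \,\frac{dt}{t}$ and controls the resulting integral by two-sided square function estimates, and it is exactly at this stage that property $(\alpha)$ is indispensable for upgrading norm-bounds to the $\mathcal{R}$-bounds needed to absorb the symbol $F$.
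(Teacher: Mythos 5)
Your proof is correct and takes essentially the same route as the proof the paper refers to (\cite[Theorem~12.8]{KunWei04}): pass to the pointwise extension $B$ of $A$ on $\Rad(X)$, observe that property $(\alpha)$ is exactly the $\mathcal{R}$-boundedness of the scalar diagonal multipliers there, and apply the Kalton--Weis operator-valued $H^{\infty}$-calculus to the diagonal symbol $F$. One small correction to your closing remark: the Kalton--Weis theorem itself holds on arbitrary Banach spaces (its square-function proof needs no property $(\alpha)$, and the $\mathcal{R}$-bound on $\{\lambda F'(\lambda)\}$ follows from the Cauchy formula plus stability of $\mathcal{R}$-bounds under absolutely convex hulls), so property $(\alpha)$ enters only where you first used it, namely in verifying the $\mathcal{R}$-boundedness hypothesis for the diagonal symbols on $\Rad(X)$.
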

			
			Note that this also implies under the above assumptions that a sectorial operator with a bounded $H^{\infty}$-calculus is $\mathcal{R}$-sectorial. This can also be proved under the following weaker assumption on the Banach space~\cite[Theorem~5.3]{KalWei01}. A Banach space $X$ \emph{has property $(\Delta)$} if there is a constant $C \ge 0$ such that for all $n \in \IN$ and all $n \times n$-matrices $[x_{ij}] \in M_n(X)$ one has
				\[ 
					\int_{[0,1]^2} \biggnorm{\sum_{i=1}^n \sum_{j=1}^{i} r_i(s) r_j(t) x_{ij}} \, ds \, dt \le C \int_{[0,1]^2} \biggnorm{\sum_{i,j=1}^n r_i(s) r_j(t) x_{ij}} \, ds \, dt. 
				\]
			
			\begin{theorem}\label{thm:hinfty_implies_rsectorial}
				Let $X$ be a Banach space with property $(\Delta)$. Further let $A$ be a sectorial operator on $X$ with a bounded $H^{\infty}$-calculus. Then $A$ is $\mathcal{R}$-sectorial with $\omega_R(A) = \omega_{H^{\infty}}(A)$.
			\end{theorem}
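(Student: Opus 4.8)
The plan is to establish the two inequalities $\omega_R(A) \le \omega_{H^\infty}(A)$ and $\omega_{H^\infty}(A) \le \omega_R(A)$. The first already shows that $A$ is $\mathcal{R}$-sectorial (its $\mathcal{R}$-sectorial angle is finite, being bounded by $\omega_{H^\infty}(A) < \infty$), and combined with the second it yields the asserted equality of angles.

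For $\omega_R(A) \le \omega_{H^\infty}(A)$ I would realize the rescaled resolvents as values of the functional calculus. Fix $\sigma > \omega_{H^\infty}(A)$ and, for $\lambda \notin \overline{\Sigma_\sigma}$, put $r_\lambda(z) \coloneqq \lambda(\lambda - z)^{-1}$, so that $r_\lambda(A) = \lambda R(\lambda, A)$. Each $r_\lambda$ is bounded and holomorphic on every smaller sector $\Sigma_{\sigma'}$, and the elementary geometric estimate $\abs{\lambda - z} \ge \sin(\sigma - \sigma') \max(\abs{\lambda}, \abs{z})$ for $\lambda \notin \overline{\Sigma_\sigma}$ and $z \in \Sigma_{\sigma'}$ gives the bound $\norm{r_\lambda}_{H^\infty(\Sigma_{\sigma'})} \le 1/\sin(\sigma - \sigma')$, \emph{uniformly} in $\lambda$. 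Thus $\{ r_\lambda : \lambda \notin \overline{\Sigma_\sigma} \}$ is a bounded subset of $H^\infty(\Sigma_{\sigma'})$ once we fix $\omega_{H^\infty}(A) < \sigma' < \sigma$, and the transfer of boundedness into $\mathcal{R}$-boundedness---this is Theorem~\ref{thm:h_infty_generates_R_bounded_sets} under the stronger property $(\alpha)$ and~\cite[Theorem~5.3]{KalWei01} under property $(\Delta)$---shows that $\{ \lambda R(\lambda, A) : \lambda \notin \overline{\Sigma_\sigma} \}$ is $\mathcal{R}$-bounded. Hence $\omega_R(A) \le \sigma$, and letting $\sigma \downarrow \omega_{H^\infty}(A)$ proves the inequality.

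For the reverse inequality $\omega_{H^\infty}(A) \le \omega_R(A)$ I would use the $\mathcal{R}$-sectoriality to push the angle of the calculus down to $\omega_R(A)$. The decisive intermediate statement is that, for any $\psi \in H_0^\infty(\Sigma_\sigma)$ with $\sigma > \omega_R(A)$, the family $\{ \psi(tA) : t > 0 \}$ is $\mathcal{R}$-bounded. This follows from the contour representation $\psi(tA) = \frac{1}{2\pi i} \int_{\partial \Sigma_\nu} \psi(t\lambda) R(\lambda, A) \, d\lambda$ with $\omega_R(A) < \nu < \sigma$: the $\mathcal{R}$-boundedness of $\{ \lambda R(\lambda, A) \}$ together with the scale-invariant finiteness of $\int_{\partial \Sigma_\nu} \abs{\psi(t\lambda)} \, \abs{d\lambda}/\abs{\lambda}$ lets one bound the $\mathcal{R}$-norm of the integral by the standard principle that an absolutely convergent average of an $\mathcal{R}$-bounded family is again $\mathcal{R}$-bounded. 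From the $\mathcal{R}$-boundedness of $\{ \psi(tA) : t > 0 \}$ one then derives, through a Littlewood--Paley decomposition at angle $\sigma$, the square-function estimate which---matched with the lower estimate coming from the already-present bounded $H^\infty$-calculus---upgrades via the square-function characterization of the $H^\infty$-calculus to a bounded $H^\infty(\Sigma_\sigma)$-calculus. As $\sigma > \omega_R(A)$ was arbitrary this gives $\omega_{H^\infty}(A) \le \omega_R(A)$.

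The main obstacle is this last upgrading step in the reverse inequality: converting the $\mathcal{R}$-boundedness of the dyadic blocks $\{\psi(tA)\}$ into a genuine norm bound for the full operator $f(A)$, uniformly in $\norm{f}_{H^\infty(\Sigma_\sigma)}$. This is precisely where property $(\Delta)$ is indispensable and has to substitute for the more convenient property $(\alpha)$: its triangular-truncation structure is what controls the partial sums $\sum_n (f \phi_n)(A)$ of the Littlewood--Paley expansion in the absence of full unconditionality. Justifying the interchange of the contour integral with the Rademacher averages, and the convergence of the decomposition in the relevant square-function norm rather than merely weakly, are the delicate points; by comparison the geometric resolvent estimate of the first part is completely routine.
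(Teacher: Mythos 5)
You should first be aware that the paper itself contains no proof of this theorem: it is stated with a citation to~\cite[Theorem~5.3]{KalWei01}, so the only possible comparison is with that cited argument. Your global architecture (two inequalities; resolvents realized as a uniformly $H^{\infty}$-bounded family; $\mathcal{R}$-boundedness of $\{\psi(tA): t>0\}$ via the scale-invariant contour estimate and the principle that absolutely convergent averages of $\mathcal{R}$-bounded families are $\mathcal{R}$-bounded) is the right one, and those individual steps are correct. But the first half of your proposal contains a circularity that undermines it as a proof. Under property $(\Delta)$ there is no general \enquote{transfer of boundedness into $\mathcal{R}$-boundedness}: Theorem~\ref{thm:h_infty_generates_R_bounded_sets} is only available under Pisier's property $(\alpha)$, and on a space with only $(\Delta)$ a uniformly bounded subset of $H^{\infty}(\Sigma_{\sigma'})$ need not map to an $\mathcal{R}$-bounded set of operators --- $(\Delta)$ controls only \emph{triangular} double Rademacher sums, whereas the general transfer requires full unconditionality of double sums, which is what $(\alpha)$ buys. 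The reference you substitute under $(\Delta)$, namely \cite[Theorem~5.3]{KalWei01}, is not a transfer principle: it is precisely the theorem you are being asked to prove. Kalton and Weis obtain the $\mathcal{R}$-boundedness of $\{\lambda R(\lambda,A)\}$ by an argument tailored to the specific functions $\lambda/(\lambda - \cdot\,)$, in which a dyadic decomposition produces a double-indexed Rademacher sum whose dominant part has the triangular structure that $(\Delta)$ can handle. So the step you dismiss in your last sentence as \enquote{completely routine} is in fact the crux under the stated hypothesis; only the elementary estimate $\normalnorm{r_{\lambda}}_{H^{\infty}(\Sigma_{\sigma'})} \le 1/\sin(\sigma - \sigma')$ is routine.

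In the reverse direction, the intermediate claim ($\mathcal{R}$-boundedness of $\{\psi(tA): t > 0\}$ for $\psi \in H_0^{\infty}(\Sigma_{\sigma})$, $\sigma > \omega_R(A)$, with contour at an angle $\nu \in (\omega_R(A), \sigma)$) is correct and standard. The decisive \enquote{upgrading} step, however, is only named, not executed: the square-function characterization of the bounded $H^{\infty}$-calculus in its Rademacher form, with the angle flexibility you need (square function estimates inherited from the calculus at the \emph{large} angle $\omega_{H^{\infty}}(A)$, their duals for $A^{*}$, then reassembled into a calculus at the \emph{smaller} angle $\sigma > \omega_R(A)$), is itself a Kalton--Weis theorem of essentially the same depth as the statement being proved, and its proof is again where $(\Delta)$ (or $\mathcal{R}$-sectoriality plus duality) must do the work you attribute to it. You do flag this honestly, and your diagnosis of the role of $(\Delta)$ is qualitatively right; but as written the proposal defers both halves to the very literature item the paper cites, so it is a correct roadmap of the Kalton--Weis argument rather than a proof. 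To complete it you would have to carry out the triangular-decomposition argument with property $(\Delta)$ explicitly, in the first direction for the resolvent family itself and in the second for the partial sums of the Littlewood--Paley expansion.
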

			
			The above theorem can be seen as a generalization of the result that a sectorial operator with a bounded $H^{\infty}$-calculus on a Hilbert space satisfies $\omega(A) = \omega_{H^{\infty}}(A)$. In particular, the example for the strict inequality $\omega_{H^{\infty}}(A) > \omega(A)$ on a subspace of $L_p$ gives the same strict inequality for the $\mathcal{R}$-sectorial angle $\omega_R(A)$.
			
			It is an important and natural question to ask which classes of sectorial operators have a bounded $H^{\infty}$-calculus. In the following a \emph{contractive analytic semigroup} is an analytic semigroup $(T(z))$ with $\norm{T(t)} \le 1$ for all $t \ge 0$. In the Hilbert space case one has the following characterization.
			
			\begin{theorem}\label{thm:characterization_hinfty_hilbert}
				Let $A$ be a sectorial operator on a Hilbert space such that $-A$ generates a contractive analytic $C_0$-semigroup. Then $A$ has a bounded $H^{\infty}$-calculus with $\omega_{H^{\infty}}(A) = \omega(A) < \frac{\pi}{2}$. 
				
				Conversely, if $A$ has a bounded $H^{\infty}$-calculus with $\omega_{H^{\infty}}(A) < \frac{\pi}{2}$, then there exists an invertible $S \in \mathcal{B}(H)$ such that $-S^{-1} A S$ generates a contractive analytic $C_0$-semigroup.
			\end{theorem}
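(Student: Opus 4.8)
The plan is to route both implications through the square function (quadratic estimate) characterization of the bounded $H^{\infty}$-calculus on Hilbert space due to McIntosh \cite{McI86}: a sectorial operator $B$ on a Hilbert space $H$ admits a bounded $H^{\infty}(\Sigma_{\sigma})$-calculus for some $\sigma \in (\omega(B), \pi)$ if and only if $B$ satisfies the two-sided estimate
\[ c\norm{x}^2 \le \int_0^\infty \norm{\psi(tB)x}^2 \, \frac{dt}{t} \le C\norm{x}^2 \qquad (x \in H) \]
for one (equivalently every) nonzero $\psi \in H_0^{\infty}(\Sigma_{\mu})$ with $\mu > \omega(B)$; moreover the resulting calculus is then automatically bounded on every $\Sigma_{\sigma}$ with $\sigma > \omega(B)$, so that $\omega_{H^{\infty}}(B) = \omega(B)$. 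I would take this as the central black box and do the operator-theoretic work on either side of it.

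For the direct implication I would first convert the semigroup hypothesis into accretivity. By the Lumer--Phillips theorem $-A$ generates a contraction $C_0$-semigroup precisely when $A$ is m-accretive, i.e.\ $\Re\langle Ax, x\rangle \ge 0$ on $D(A)$ together with the range condition, and the analyticity of the semigroup forces $\omega(A) < \pi/2$. To produce a bounded calculus I would pass to the Cayley transform $V \coloneqq (A - I)(A + I)^{-1}$, which is a genuine contraction on $H$ because $A$ is m-accretive. Von Neumann's inequality then gives $\norm{p(V)} \le \norm{p}_{\infty, \mathbb{D}}$ for polynomials $p$, and transporting this bound along the Möbius map $z \mapsto \frac{z-1}{z+1}$ from $\Sigma_{\pi/2}$ onto $\mathbb{D}$ yields, modulo the standard domain technicalities for the unbounded operator $A$, a bounded $H^{\infty}(\Sigma_{\pi/2})$-calculus. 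Since $\pi/2 > \omega(A)$, the optimality part of McIntosh's theorem upgrades this to $\omega_{H^{\infty}}(A) = \omega(A) < \pi/2$.

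For the converse I would build the similarity by an explicit Hilbertian renorming. Starting from a bounded $H^{\infty}(\Sigma_{\sigma})$-calculus with $\sigma < \pi/2$, note first that $\norm{e^{-tz}}_{H^{\infty}(\Sigma_{\sigma})} = 1$, so $(e^{-tA})_{t \ge 0}$ is a bounded analytic semigroup. Choosing $\psi(z) = z^{1/2}e^{-z} \in H_0^{\infty}(\Sigma_{\mu})$ for some $\omega(A) < \mu < \pi/2$, I would set
\[ [x, y] \coloneqq \int_0^\infty \langle A^{1/2}e^{-rA}x, A^{1/2}e^{-rA}y\rangle \, dr, \]
which by the two-sided quadratic estimate defines an inner product equivalent to the original one. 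A direct substitution $r \mapsto r + t$ gives $[e^{-tA}x, e^{-tA}x] = \int_t^\infty \norm{A^{1/2}e^{-rA}x}^2 \, dr \le [x, x]$, so the semigroup is contractive for $[\cdot,\cdot]$. Writing $[x, y] = \langle Gx, y\rangle$ with $G \in \mathcal{B}(H)$ positive and invertible and putting $S \coloneqq G^{-1/2}$, the rescaled semigroup $e^{-tS^{-1}AS} = G^{1/2}e^{-tA}G^{-1/2}$ is contractive for the original inner product, and it remains analytic since similarity preserves the sectorial angle. Hence $-S^{-1}AS$ generates a contractive analytic $C_0$-semigroup.

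I expect the genuine obstacle to be McIntosh's two-sided estimate itself, which is the only ingredient specific to Hilbert space and whose proof in the non-self-adjoint setting is technically demanding: in the direct implication it hides inside the passage from the angle $\pi/2$ to the optimal angle $\omega(A)$, while in the converse it is exactly what makes $[\cdot,\cdot]$ equivalent to the original inner product (the lower bound there being the upper quadratic estimate for $A^*$). The remaining friction is bookkeeping --- the unboundedness of $A$ in the Cayley-transform transfer and the verification that $G^{1/2}$ really intertwines the two semigroups --- but the accretivity input, von Neumann's inequality, and the contractivity computation for $[\cdot,\cdot]$ are all clean once the square function estimate is in hand.
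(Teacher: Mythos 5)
Your proposal is correct, but it takes a visibly different route from the paper, whose own treatment is essentially citational. For the forward direction the paper invokes Sz.-Nagy's dilation theorem (Theorem~\ref{thm:dilation_hilbert_space}): the contraction semigroup dilates strictly to a unitary group, the spectral theorem gives the unitary group's generator a bounded $H^{\infty}(\Sigma_{\sigma})$-calculus for every $\sigma > \frac{\pi}{2}$, this passes to $A$ (the corollary following Theorem~\ref{thm:dilation_hilbert_space}), and then --- exactly as in your argument --- the Hilbert-space angle equality $\omega_{H^{\infty}}(A) = \omega(A)$ finishes. You replace the dilation by the Cayley transform $V = (A-I)(A+I)^{-1}$ plus von Neumann's inequality; this is an essentially equivalent mechanism (von Neumann's inequality is itself the scalar shadow of unitary dilation), but it stays inside $H$ and avoids constructing the larger space, at the cost of the approximation step you flag when passing from polynomials in $V$ to all of $H_0^{\infty}(\Sigma_{\pi/2})$ --- though note McIntosh's theorem only needs a bounded calculus at \emph{some} angle in $(\omega(A),\pi)$, so a lossy bound at angles slightly above $\frac{\pi}{2}$ already suffices. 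For the converse the paper simply cites Le Merdy's Theorem~1.1 in \cite{Mer98}, and here your contribution is more substantial: the renorming $[x,y] = \int_0^{\infty} \langle A^{1/2}e^{-rA}x, A^{1/2}e^{-rA}y\rangle \, dr$, with equivalence of norms from the two-sided quadratic estimate (the lower bound being the dual estimate for $A^*$), contractivity via the substitution $r \mapsto r+t$, and the similarity $S = G^{-1/2}$, is precisely Le Merdy's own argument, so your proposal in effect supplies a self-contained proof of the half that the survey outsources. Your bookkeeping is sound throughout: with $\psi(z) = z^{1/2}e^{-z}$ one has $\int_0^{\infty} \normalnorm{\psi(tA)x}^2 \frac{dt}{t} = \int_0^{\infty} \normalnorm{A^{1/2}e^{-tA}x}^2 \, dt$, the form is Hermitian, bounded and coercive, $G^{1/2}e^{-tA}G^{-1/2} = e^{-tS^{-1}AS}$ is contractive in the original norm, and similarity preserves the sectoriality angle. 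Both routes correctly concentrate all the Hilbert-space-specific depth in McIntosh's angle-independence theorem. One small caution: your phrase \enquote{the analyticity of the semigroup forces $\omega(A) < \frac{\pi}{2}$} relies on the convention --- shared by the paper and by \cite{Mer98} --- that the analytic semigroup is bounded on subsectors; analyticity plus contractivity on $[0,\infty)$ alone does not yield $\omega(A) < \frac{\pi}{2}$ (sectorial multiplication operators with spectrum tangent to the imaginary axis at the origin give $\omega(A) = \frac{\pi}{2}$), so this convention should be made explicit in a write-up.
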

			
			The first implication follows from the existence of a dilation to a $C_0$-group as discussed in Section~\ref{sec:dilations} and the fact $\omega_{H^{\infty}}(A) = \omega(A)$, the second implication is a result of C.~Le Merdy~\cite[Theorem~1.1]{Mer98}. There is an analogue in the $L_p$-case.			
			\begin{theorem}\label{thm:characterization_hinfty_banach}
				Let $p \in (1, \infty)$ and $A$ be a sectorial operator on an $L_p$-space $L_p(\Omega)$ such that $-A$ generates a contractive positive analytic $C_0$-semigroup. Then $A$ has a bounded $H^{\infty}$-calculus with $\omega_{H^{\infty}}(A) = \omega_R(A) < \frac{\pi}{2}$.
				
				Conversely, if $A$ has a bounded $H^{\infty}$-calculus with $\omega_{H^{\infty}}(A) < \frac{\pi}{2}$, then there exists a sectorial operator $B$ on a second $L_p$-space $L_p(\tilde{\Omega})$ with $\omega_{H^{\infty}}(B) < \frac{\pi}{2}$ such that $-B$ generates a positive contractive analytic $C_0$-semigroup, a quotient of a subspace $E$ of $L_p(\tilde{\Omega})$ and an invertible $S \in \mathcal{B}(L_p(\Omega), E)$ with $A = S^{-1} B S$.
			\end{theorem}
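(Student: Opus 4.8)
I would prove the two implications by quite different means; in both, the interaction between the order structure of $L_p$ and the analyticity of the semigroup does the real work. For the forward implication the equality of angles comes essentially for free: since $L_p$ for $p\in(1,\infty)$ has Pisier's property $(\alpha)$ — and property $(\alpha)$ implies property $(\Delta)$ by specializing the defining inequality to the scalars $\alpha_{ij}=\Ind_{\{j\le i\}}$ — Theorem~\ref{thm:hinfty_implies_rsectorial} gives $\omega_R(A)=\omega_{H^\infty}(A)$ as soon as a bounded $H^\infty$-calculus is available. The substantive task is therefore to produce a bounded $H^\infty(\Sigma_\sigma)$-calculus with some $\sigma<\tfrac{\pi}{2}$. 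The plan is dilation together with transference. By Fendler's dilation theorem the positive contraction $C_0$-semigroup $(T(t))_{t\ge0}$ on $L_p(\Omega)$ dilates to a $C_0$-group $(U(t))_{t\in\IR}$ of positive isometries on a larger space $L_p(\tilde\Omega)$, i.e.\ there are a positive isometric embedding $J\colon L_p(\Omega)\to L_p(\tilde\Omega)$ and a positive contractive projection $P$ onto its range with $T(t)=PU(t)J$ for all $t\ge0$. The generator of the bounded group has a bounded $H^\infty$-calculus on every horizontal strip, by the boundedness of the Hilbert transform on the UMD space $L_p(\tilde\Omega)$ (this is exactly where $p\in(1,\infty)$ is used); transferring these strip multiplier bounds through the identity $T(t)=PU(t)J$ in the manner of Coifman--Weiss produces a bounded functional calculus for $A$. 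The one genuinely delicate point — and the step I expect to require the most care — is the angle bookkeeping: transference as such only controls the calculus up to the boundary angle $\tfrac{\pi}{2}$, and one must feed in the analyticity of $(T(t))$, equivalently $\omega(A)<\tfrac{\pi}{2}$, to push the admissible sector \emph{strictly} below $\tfrac{\pi}{2}$, so that indeed $\omega_{H^\infty}(A)<\tfrac{\pi}{2}$ and not merely $\omega_{H^\infty}(A)\le\tfrac{\pi}{2}$. Equivalently, one may recast the goal as a two-sided square function estimate in the sense of Cowling--Doust--McIntosh--Yagi, whose forward half for $\psi(z)=ze^{-z}$ is a Littlewood--Paley--Stein $g$-function bound made available precisely by positivity and contractivity.

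For the converse I would follow the route of Le~Merdy's Hilbert-space result (Theorem~\ref{thm:characterization_hinfty_hilbert}), adapted to the fact that one cannot renorm an $L_p$-space and remain in the $L_p$ category — which is exactly why a quotient of a subspace must appear. From $\omega_{H^\infty}(A)<\tfrac{\pi}{2}$ one has $\omega(A)<\tfrac{\pi}{2}$, so $-A$ generates a bounded analytic semigroup $T(t)=e^{-tA}$, and Theorem~\ref{thm:h_infty_generates_R_bounded_sets} together with property $(\alpha)$ supplies the $\mathcal{R}$-bounded families needed for a two-sided square function estimate. Consequently the map $u\colon x\mapsto\bigl(t\mapsto tAe^{-tA}x\bigr)$ is an isomorphism of $L_p(\Omega)$ onto its range inside $L_p\bigl(\Omega;L_2(\IR_+,\tfrac{dt}{t})\bigr)$; since $L_2$ embeds isometrically into $L_p$ through Gaussian random variables, this target space is itself isometric to a subspace of some $L_p(\tilde\Omega)$. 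Taking $S=u$ and letting $E$ be the resulting quotient of a subspace then realizes $A$ as $S^{-1}BS$ for a model operator $B$ on $L_p(\tilde\Omega)$. The crux, and what I expect to be the main obstacle, is to arrange that $-B$ generates a \emph{positive, contractive, analytic} semigroup: a diagonal or multiplication model is ruled out immediately, since multiplication by a non-real symbol is never positivity preserving, so $B$ must be built from a genuinely positive contractive template — a heat- or translation-type generator on the ambient $L_p(\tilde\Omega)$ — whose $H^\infty$-functional-calculus model reproduces the sectorial behaviour of $A$. Making this template simultaneously generate a positive contraction semigroup, be analytic of angle $<\tfrac{\pi}{2}$, and, through the quotient-of-subspace structure, restrict to an operator similar to $A$ is the technical heart of the argument.
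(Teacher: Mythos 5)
Note first that the survey itself does not prove this theorem: the first half is attributed to L.~Weis (\cite[Remark~4.9c)]{Wei01}, \cite[Section~4d)]{Wei01b}) and the second half to \cite{Fac14b}, so your proposal has to be measured against those cited arguments. For the forward implication your skeleton --- Fendler's dilation theorem \cite{Fen97}, transference via Proposition~\ref{prop:hinfty_and_dilation} to get $\omega_{H^{\infty}}(A)\le\frac{\pi}{2}$, then the angle identity of Theorem~\ref{thm:hinfty_implies_rsectorial} (and your remark that property $(\alpha)$ implies property $(\Delta)$ is correct) --- is indeed the standard route. The genuine gap is the step you yourself flag as delicate: you propose to push the calculus angle strictly below $\frac{\pi}{2}$ by \emph{feeding in the analyticity of $(T(t))$, equivalently $\omega(A)<\frac{\pi}{2}$}. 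That cannot work: Theorem~\ref{thm:hinfty_implies_rsectorial} gives $\omega_{H^{\infty}}(A)=\omega_R(A)$, not $\omega_{H^{\infty}}(A)=\omega(A)$, and $\omega(A)<\frac{\pi}{2}$ does not imply $\omega_R(A)<\frac{\pi}{2}$. Indeed, Section~\ref{sec:hinfty} recalls operators on a subspace of $L_p$ (which still has property $(\Delta)$) with $\omega(A)<\omega_{H^{\infty}}(A)=\omega_R(A)<\infty$, and taking suitable fractional powers yields an operator with a bounded $H^{\infty}$-calculus, $\omega(A)<\frac{\pi}{2}$, yet $\omega_R(A)\ge\frac{\pi}{2}$. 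So no angle bookkeeping starting from mere sectoriality of angle $<\frac{\pi}{2}$ can close the argument; the missing ingredient is precisely the $\mathcal{R}$-analyticity $\omega_R(A)<\frac{\pi}{2}$ of analytic semigroups of positive contractions on $L_p$ (\cite[Theorem~4.2]{Wei01}, resting on Littlewood--Paley--Stein estimates in the spirit of Lamberton's maximal regularity theorem) --- a second substantive use of positivity and contractivity, which your closing remark about $g$-functions gestures toward but does not supply.

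For the converse your sketch stops exactly where the content of \cite{Fac14b} begins, and your own framework provably cannot deliver the model operator. With $u\colon x\mapsto\bigl(t\mapsto tAe^{-tA}x\bigr)$ into $L_p\bigl(\Omega;L_2(\IR_{+},\frac{dt}{t})\bigr)$, the semigroup that $u$ intertwines $e^{-sA}$ with is the weighted translation $(R(s)f)(t)=\frac{t}{t+s}f(t+s)$: this is positive and contractive on the fibers, but it is \emph{not} analytic --- its generator $G=\frac{d}{dt}-\frac{1}{t}$ has every $\lambda$ with $\Re\lambda<0$ as an eigenvalue (solve $f'=(\lambda+\frac{1}{t})f$ to get $f(t)=te^{\lambda t}\in L_2(\IR_{+},\frac{dt}{t})$), so $-G$ is sectorial of angle exactly $\frac{\pi}{2}$ and generates no analytic semigroup. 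Thus translation-type models in your representation fail analyticity, multiplication-type models fail positivity (as you note), and a $B$ that is simultaneously positive, contractive, analytic, of calculus angle $<\frac{\pi}{2}$, and intertwined with $A$ is never constructed --- that is the entire theorem, not a final technicality to be deferred. Relatedly, your sketch offers no mechanism producing the quotient-of-a-subspace structure: since on $L_p$ the two-sided square function estimate makes $u$ an isomorphism onto its closed range, your construction would exhibit $L_p(\Omega)$ as a plain (indeed, via the Gaussian projection for $1<p<\infty$, complemented) subspace of $L_p(\tilde{\Omega})$, whereas the quotient in the statement signals that the actual construction must pass to a larger subspace invariant under the model semigroup and then to a quotient, the range of a square-function embedding not being invariant under any of your candidate models. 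In summary: the forward half becomes correct once Weis's $\mathcal{R}$-analyticity theorem is inserted in place of the analyticity argument, while the converse half is an announcement of the result of \cite{Fac14b} rather than a proof of it.
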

			
			The first implication is due to L.~Weis (see \cite[Remark~4.9c)]{Wei01} and \cite[Section~4d)]{Wei01b}), the second one was obtained by the author in~\cite{Fac14b}. There are some open questions regarding generalizations of Weis' result.
			
			\begin{problem}
				Let $A$ be a sectorial operator on some UMD-Banach lattice and suppose that $-A$ is the generator of a positive contractive $C_0$-semigroup. Does $A$ have a bounded $H^{\infty}$-calculus (bounded imaginary powers / is $\mathcal{R}$-analytic)?
			\end{problem}
			
			\begin{problem}\label{prob:contractive_generator_lp_hinfy}
				Let $A$ be a sectorial operator on some $L_p$-space for $p \in (1,\infty)$ and suppose that $-A$ is the generator of a contractive $C_0$-semigroup. Does $A$ have a bounded $H^{\infty}$-calculus (bounded imaginary powers / is $\mathcal{R}$-analytic)?
			\end{problem}
			
			\begin{problem}
				Let $A$ be a sectorial operator on some $L_p$-space for $p \in (1,\infty)$ and suppose that $-A$ is the generator of a positive $C_0$-semigroup. Does $A$ have a bounded $H^{\infty}$-calculus (bounded imaginary powers / is $\mathcal{R}$-analytic)?
			\end{problem}
			
			\begin{problem}
				Find a similar characterization as in Theorem~\ref{thm:characterization_hinfty_hilbert} or Theorem~\ref{thm:characterization_hinfty_banach} in the case $\omega_{H^{\infty}(A)} = \frac{\pi}{2}$.
			\end{problem}
			
			It was observed by C.~Le Merdy in~\cite[p.~33]{Mer99} that a counterexample to Problem~\ref{prob:contractive_generator_lp_hinfy} on $L_p$ would also provide a negative answer to a (largely) open conjecture by Matsaev. For an introduction to the problem, its noncommutative analogue and further references we refer to the recent article~\cite{Arh13}. We note that there exists a $2 \times 2$-matrix counterexample to Matsaev's conjecture for the case $p=4$ which was obtained with the help of numerics~\cite{Dru11}, but an analytic approach is missing.
			
		\subsection{Bounded Imaginary Powers (BIP)}
		
			Sectorial operators with bounded imaginary powers have been studied before the first appearance of the $H^{\infty}$-calculus. They play an important role in the Dore--Venni theorem~\cite[Theorem~2.1]{DorVen87} and in the interpolation of fractional domain spaces~\cite{Yag84}.
		
			\begin{definition}[Bounded Imaginary Powers (BIP)]\index{bounded imaginary powers}\index{BIP|see{Bounded Imaginary Powers}}
				A sectorial operator on a Banach space $X$ is said to have \emph{bounded imaginary powers (BIP)} if for all $t \in \IR$ the operator $A^{it}$ associated to the functions $\lambda \mapsto \lambda^{it}$ via the holomorphic functional calculus is bounded.
			\end{definition}
			
			In this case $(A^{it})_{t \in \IR}$ is a $C_0$-group on $X$ with generator $i\log A$~\cite[Corollary~3.5.7]{Haa06}. The growth of the $C_0$-group $(A^{it})_{t \in \IR}$ is used to define the BIP-angle.
			
			\begin{definition}
				For a sectorial operator $A$ on some Banach space with bounded imaginary powers one defines
					\[ 
						\omega_{\text{BIP}}(A) \coloneqq \inf \{ \omega \ge 0: \normalnorm{A^{it}} \le Me^{\omega \abs{t}} \text{ for all } t \in \IR \text{ for some } M \ge 0 \}.
					\]
				If $A$ does not have bounded imaginary powers, we set $\omega_{\text{BIP}}(A) \coloneqq \infty$.
			\end{definition}
			
			Let $A$ be a sectorial operator with a bounded $H^{\infty}(\Sigma_{\sigma})$-calculus for some $\sigma \in (0, \pi)$. Then one has
				\[ \normalabs{\lambda^{it}} \le \exp(\Re(it \log \lambda)) \le \exp(\abs{t} \sigma) \]
			for all $\lambda \in \Sigma_{\sigma}$. This shows that the boundedness of the $H^{\infty}$-calculus for $A$ implies that $A$ has bounded imaginary powers with $\omega_{\text{BIP}}(A) \le \omega_{H^{\infty}}(A)$. A less obvious fact is that BIP implies $\mathcal{R}$-sectoriality on UMD-spaces~\cite[Theorem~4.5]{DHP03}. A Banach space is called a UMD-space if the vector-valued Hilbert transform is bounded on $L_2(\IR;X)$. There are more equivalent definitions of UMD-spaces. For details we refer to \cite{Bur01} and \cite{Fra86}. We only note the following: if $X$ is a UMD-space, then so is $L_p(\Omega;X)$ for all measure spaces $\Omega$ and $p \in (1,\infty)$. In particular, $L_p(\Omega)$ is UMD. Moreover, every UMD-space has property $(\Delta)$, but not every UMD-space has Pisier's property $(\alpha)$. 
			
			\begin{theorem}
				Let $A$ be a sectorial operator with bounded imaginary powers on a UMD-space. Then $A$ is $\mathcal{R}$-sectorial with $\omega_R(A) \le \omega_{\mathrm{BIP}}(A)$.
			\end{theorem}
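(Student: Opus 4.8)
The plan is to verify $\mathcal{R}$-sectoriality directly from the definition, by producing for every $\theta \in (\omega_{\mathrm{BIP}}(A), \pi)$ an $\mathcal{R}$-bound for the resolvent family on the complement of $\overline{\Sigma_{\theta}}$. Putting $\mu = -\lambda$, one has $\lambda R(\lambda, A) = \mu(\mu+A)^{-1} = \Id - A(\mu+A)^{-1}$, and the condition $\lambda \notin \overline{\Sigma_{\theta}}$ translates into $\mu \in \Sigma_{\pi - \theta}$. Since adjoining the fixed operator $\Id$ does not change $\mathcal{R}$-boundedness, it suffices to show that $\{A(\mu+A)^{-1} : \mu \in \Sigma_{\pi-\theta}\}$ is $\mathcal{R}$-bounded. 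This is precisely $(\mathcal{R}_{\theta})$, so it gives $\omega_R(A) \le \theta$, and taking the infimum over $\theta > \omega_{\mathrm{BIP}}(A)$ yields $\omega_R(A) \le \omega_{\mathrm{BIP}}(A)$.

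First I would exploit the group structure of the imaginary powers. Since $A^{is} = e^{is\log A}$, the operator $\log A$ is (up to the factor $i$) the generator of the $C_0$-group $(A^{is})_{s \in \IR}$, and for every $\omega' > \omega_{\mathrm{BIP}}(A)$ there is $M_{\omega'}$ with $\norm{A^{is}} \le M_{\omega'} e^{\omega'\abs{s}}$. The key identity is that $A(\mu+A)^{-1}$ is the value of the functional calculus of $\log A$ on the symbol
\[ f_{\mu}(u) = \frac{e^{u}}{e^{u}+\mu} = \frac{1}{1+\mu e^{-u}}, \qquad \text{i.e.}\qquad A(\mu+A)^{-1} = f_{\mu}(\log A), \]
which follows from the composition rule for the holomorphic calculus (agreement on a core). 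Fourier inversion of $f_{\mu}(u) = \tfrac12\bigl(1 + \tanh\tfrac{u - \log\mu}{2}\bigr)$ then gives the representation $A(\mu+A)^{-1} = \tfrac12\Id + \tfrac{1}{2i}\,\mathrm{p.v.}\int_{\IR} \frac{\mu^{-is}}{\sinh(\pi s)}\,A^{is}\,ds$. The decisive feature is that $f_{\mu}$ extends to a bounded holomorphic function on the strip $\{\abs{\Im u} < \pi - \abs{\arg\mu}\}$, its singularities sitting at $\Im u = \pm(\pi - \abs{\arg\mu})$; for $\mu \in \Sigma_{\pi-\theta}$ this half-width is at least $\theta$. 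Correspondingly the kernel decays like $e^{-\pi\abs{s}}$, which dominates the group growth $e^{\omega'\abs{s}}$ together with $\abs{\mu^{-is}} = e^{s\arg\mu}$ as soon as $\omega' + \abs{\arg\mu} < \pi$, and the latter holds uniformly for $\mu \in \Sigma_{\pi-\theta}$ once $\omega' < \theta$. Hence the representation converges absolutely and uniformly, giving uniform boundedness.

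It remains to upgrade this to $\mathcal{R}$-boundedness, and here the UMD hypothesis enters; this is the main obstacle. I would fix $\omega_{\mathrm{BIP}}(A) < \omega' < \omega_1 < \theta$ and check that $\{f_{\mu} : \mu \in \Sigma_{\pi-\theta}\}$ is uniformly bounded and holomorphic on the strip $\{\abs{\Im u} < \omega_1\}$ with a uniform Mikhlin-type bound $\sup_{\mu}\sup_{\abs{\Im u} < \omega_1}\bigl(\abs{f_{\mu}(u)} + \abs{u f_{\mu}'(u)}\bigr) < \infty$; the separation of the strip from the singularities of $f_{\mu}$ makes these bounds uniform in $\mu$. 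Then I would invoke the $\mathcal{R}$-bounded, operator-valued version of the transference principle for $C_0$-groups (of Coifman--Weiss type): on a UMD space, a family of symbols that is uniformly bounded and holomorphic on a strip strictly wider than the group type $\omega'$ yields, when applied to the group generator, an $\mathcal{R}$-bounded family of operators. The exponential growth of $(A^{is})$ is exactly what forces the passage to a genuine strip rather than to a bounded group, and it is the boundedness of the vector-valued Hilbert transform on $L_2(\IR; X)$ --- the UMD property --- that powers the underlying operator-valued Fourier multiplier theorem. Applying this to the symbols $f_{\mu}$ gives the $\mathcal{R}$-boundedness of $\{A(\mu+A)^{-1} : \mu \in \Sigma_{\pi-\theta}\}$, and the reduction of the first paragraph finishes the argument. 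The one point demanding care throughout is the identification $f_{\mu}(\log A) = A(\mu+A)^{-1}$, which I would settle by a standard uniqueness argument for the two bounded calculi.
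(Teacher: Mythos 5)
Your reduction to the family $\{A(\mu+A)^{-1} : \mu \in \Sigma_{\pi-\theta}\}$, the identification $A(\mu+A)^{-1} = f_{\mu}(\log A)$, and the principal value representation with kernel $1/(2i\sinh(\pi s))$ are exactly the classical Cl\'ement--Pr\"uss argument behind the reference \cite[Theorem~4.5]{DHP03} that the paper cites for this theorem (the paper gives no proof of its own), and your bookkeeping of the exponents $e^{-\pi\abs{s}}$, $e^{\omega'\abs{s}}$, $e^{\abs{\arg\mu}\abs{s}}$ is correct up to the uniform-boundedness stage. The passage to $\mathcal{R}$-boundedness, however, contains two genuine errors. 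First, the claimed uniform Mikhlin bound is false: writing $\mu = te^{i\phi}$ one has $f_{\mu}(u) = g(u - \log t - i\phi)$ for the \emph{fixed} symbol $g(w) = (1+e^{-w})^{-1}$, so the family consists of translates of one function, and at $u = \log t$ one gets $\abs{u f_{\mu}'(u)} \ge \tfrac{1}{4}\abs{\log t} \to \infty$; Mikhlin's condition is not translation invariant. Second, and more seriously, the transference lemma you invoke --- that on a UMD space a family of symbols uniformly bounded and holomorphic on a strip strictly wider than the \emph{group type} yields an $\mathcal{R}$-bounded family when applied to the generator --- is false. Since $\exp$ is a biholomorphism from the strip $H_{\sigma}$ onto $\Sigma_{\sigma}$ for $\sigma \le \pi$ and $f(A) = (f \circ \exp)(\log A)$, your lemma would give every sectorial operator with $\omega_{\mathrm{BIP}}(A) < \pi$ on a UMD space a bounded $H^{\infty}$-calculus, contradicting Corollary~\ref{cor:hinfty_correspondence} of this very paper: the shift group on $L_p(\IR)$, $p \neq 2$, is isometric (group type $0$), yet the associated sectorial operator has no bounded $H^{\infty}$-calculus. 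Theorems of this shape on UMD spaces require an \emph{$\mathcal{R}$-group type} hypothesis (Kalton--Weis; cf.\ Theorem~\ref{thm:monniaux_for_hinfty}), which is precisely what is unavailable here --- indeed $\{A^{is}e^{-\omega'\abs{s}}\}$ need not be $\mathcal{R}$-bounded under BIP alone.

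The repair --- and this is how the cited proof actually runs --- is to exploit the translation structure and randomize \emph{first}, instead of seeking a multiplier theorem for the symbol family. Given $\mu_1, \ldots, \mu_n \in \Sigma_{\pi-\theta}$ and $x_1, \ldots, x_n \in X$, insert your representation into the Rademacher sum and observe that
\[
	V(s)\colon \sum_{k=1}^n r_k x_k \mapsto \sum_{k=1}^n r_k\, \mu_k^{-is} A^{is} x_k
\]
defines a strongly continuous, exponentially bounded family on $\Rad(X)$: by Kahane's contraction principle, $\norm{V(s)} \le 2M e^{(\pi - \theta + \omega')\abs{s}}$ uniformly in the choice of the $\mu_k$, since the scalars $\mu_k^{-is}$ are bounded by $e^{(\pi-\theta)\abs{s}}$. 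Because $\Rad(X)$ is a closed subspace of $L_p([0,1];X)$, which is again UMD, the \emph{single-operator} transference estimate for the principal value convolution with $1/(2i\sinh(\pi s))$ --- the Hilbert transform singularity at $s = 0$ handled by the UMD property, the decay $e^{-\pi\abs{s}}$ beating the growth since $\omega' < \theta$ --- applies verbatim to $V$, yielding $\normalnorm{\sum_k r_k A(\mu_k+A)^{-1}x_k} \le C \normalnorm{\sum_k r_k x_k}$ with $C$ independent of $n$ and the $\mu_k$. That is the desired $\mathcal{R}$-bound, and your first paragraph then gives $\omega_R(A) \le \theta$ as intended; note that UMD enters through the Hilbert transform on $\Rad(X)$-valued functions, not through a Mikhlin condition on $\{f_{\mu}\}$.
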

			
			In particular this implies that a sectorial operator $A$ on a UMD-space with a bounded $H^{\infty}$-calculus satisfies $\omega_R(A) = \omega_{\text{BIP}}(A) = \omega_{H^{\infty}}(A)$. The first example showing that the strict inequality $\omega(A) < \omega_{\text{BIP}}(A)$ can hold was found by M.~Haase~\cite[Corollary~5.3]{Haa03} (see also Remark~\ref{rem:bip_angle_bigger}).
			
		\subsection{Sectorial Operators which have a Dilation}
			A further regularity property which is not so inherent to sectorial operators but nevertheless very important for their study is the existence of group dilations. This powerful concept goes back to B.~Sz.-Nagy. For a detailed treatment of dilation theory on Hilbert spaces see~\cite{SFBK10}. In particular one has the following result \cite[Theorem~8.1]{SFBK10}.
			
			\begin{theorem}\label{thm:dilation_hilbert_space}
				Let $(T(t))_{t \ge 0}$ be a contractive $C_0$-semigroup on a Hilbert space $H$. Then there exists a second Hilbert space $K$, an embedding $J\colon H \to K$, an orthogonal projection $P\colon K \to H$ and a unitary group $(U(t))_{t \ge 0}$ on $K$ with
					\[ T(t) = PU(t)J \qquad \text{for all } t \ge 0. \]
			\end{theorem}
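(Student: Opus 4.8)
The plan is to obtain the continuous dilation from the classical unitary dilation of a \emph{single} contraction, combined with the Kolmogorov (equivalently GNS) construction that turns a positive-definite operator function into a unitary representation. First I would extend the semigroup to a $\mathcal{B}(H)$-valued function on all of $\IR$ by setting $\phi(t) \coloneqq T(t)$ for $t \ge 0$ and $\phi(t) \coloneqq T(-t)^*$ for $t < 0$. Since $H$ is reflexive, the adjoint semigroup $(T(t)^*)_{t \ge 0}$ is again strongly continuous, so $\phi$ is strongly continuous on $\IR$ with $\phi(0) = \Id$ and $\phi(-t) = \phi(t)^*$.

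The heart of the matter, and the main obstacle, is to show that $\phi$ is \emph{positive definite}, i.e.\ that
\[ \sum_{j,k=1}^n \langle \phi(t_j - t_k) h_k, h_j \rangle \ge 0 \]
for all finite families $t_1, \ldots, t_n \in \IR$ and $h_1, \ldots, h_n \in H$. This is precisely where contractivity enters, and I would reduce it to the discrete case: for a single contraction $C \in \mathcal{B}(H)$ the sequence $m \mapsto C^m$ (with $C^{-m} \coloneqq (C^*)^m$) is positive definite on $\IZ$, which is Sz.-Nagy's unitary power dilation of a contraction and can be made completely explicit through the Schäffer dilation matrix built from the defect operators. Translating all the $t_j$ by a common constant leaves the quadratic form unchanged, so I may assume $t_j \ge 0$. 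Fixing a step $h > 0$ and integers $m_j \coloneqq \floor{t_j / h}$, the semigroup law gives $\phi((m_j - m_k) h) = C^{m_j - m_k}$ in the extended sense for the contraction $C \coloneqq T(h)$, so the discrete case yields nonnegativity of the corresponding form for every $h$. Letting $h \to 0$ we have $(m_j - m_k) h \to t_j - t_k$, and the strong continuity of $\phi$ passes the inequality to the limit, establishing positive definiteness.

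With positive definiteness in hand, the unitary group is produced by the Kolmogorov construction. On the space $\mathcal{F}$ of finitely supported functions $f \colon \IR \to H$ I would introduce the sesquilinear form $\langle f, g \rangle_K \coloneqq \sum_{s,t} \langle \phi(t - s) f(s), g(t) \rangle$; positive definiteness makes it positive semidefinite, and factoring out its null space and completing yields a Hilbert space $K$. The embedding $J \colon H \to K$, $Jh \coloneqq [\delta_0 \otimes h]$, is isometric because $\langle Jh, Jh' \rangle_K = \langle \phi(0) h, h' \rangle = \langle h, h' \rangle$, and the translations $(U(r) f)(s) \coloneqq f(s + r)$ define, after a change of variables in the defining sum, isometries with inverse $U(-r)$, hence a unitary group on $K$ satisfying $U(r) U(r') = U(r + r')$. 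Strong continuity of $(U(r))$ follows from $\norm{U(r) Jh - Jh}^2 = 2\norm{h}^2 - 2\Re\langle \phi(r) h, h \rangle \to 0$ together with the uniform boundedness of the group on the dense span of the orbit vectors.

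Finally I would read off the dilation formula. Letting $P \coloneqq J J^*$ be the orthogonal projection onto $J(H)$, a direct computation gives $U(r) Jh = [\delta_{-r} \otimes h]$, so for $r \ge 0$ and all $h, h' \in H$ one has $\langle U(r) Jh, Jh' \rangle_K = \langle \phi(r) h, h' \rangle = \langle J T(r) h, Jh' \rangle_K$, whence $P U(r) J h = J T(r) h$. Identifying $H$ with $J(H)$ this reads $T(r) = P U(r) J$ for all $r \ge 0$, as desired. The only genuinely nontrivial input is the positive definiteness of $\phi$; everything after it is routine bookkeeping, and the legitimacy of the discrete-to-continuous limit rests solely on the strong continuity of $T$ and of its adjoint semigroup.
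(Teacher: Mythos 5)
Your proposal is correct; note that the paper itself gives no proof of this statement but cites it as Sz.-Nagy's dilation theorem \cite[Theorem~8.1]{SFBK10}, and your argument --- extending $(T(t))_{t \ge 0}$ to a strongly continuous $\mathcal{B}(H)$-valued function on $\IR$ via the adjoint semigroup, reducing its positive definiteness to the discrete power-dilation kernel of the single contraction $C = T(h)$ by the floor-function discretization and passing to the limit by strong continuity, then applying the Kolmogorov construction --- is precisely the classical route taken in that reference. The delicate points (strong continuity of the adjoint semigroup on a Hilbert space, the sign-insensitive limit $(m_j - m_k)h \to t_j - t_k$ using continuity of $\phi$ at $0$ from both sides, and density of the orbit vectors for the strong continuity of $(U(t))_{t \in \IR}$) are all handled correctly.
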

			
			It follows from the spectral theory of normal operators that the negative generator of $(U(t))_{t \in \IR}$ and therefore also the negative generator of $(T(t))_{t \ge 0}$ has a bounded $H^{\infty}$-calculus for all angles bigger than $\frac{\pi}{2}$. Hence, using the fact that $\omega(A) = \omega_{H^{\infty}}(A)$ we have found a proof of the first part of Theorem~\ref{thm:characterization_hinfty_hilbert}. We have seen the following.
			
			\begin{corollary}
				Let $A$ be a sectorial operator on a Hilbert space such that $-A$ generates a contractive $C_0$-semigroup. Then $A$ has a bounded $H^{\infty}$-calculus with $\omega_{H^{\infty}}(A) = \omega(A) \le \frac{\pi}{2}$.
			\end{corollary}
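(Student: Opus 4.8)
The plan is to exploit the dilation furnished by Theorem~\ref{thm:dilation_hilbert_space} together with the fact that the functional calculus of a normal operator is governed by the spectral theorem. Writing $T(t) = e^{-tA}$ for the contractive semigroup generated by $-A$, I would first invoke Theorem~\ref{thm:dilation_hilbert_space} to obtain a Hilbert space $K$, an isometric embedding $J \colon H \to K$ with adjoint the orthogonal projection $P = J^* \colon K \to H$, and a unitary $C_0$-group $(U(t))_{t \in \IR}$ on $K$ such that $T(t) = P U(t) J$ for all $t \ge 0$. Let $B$ denote the negative generator of $(U(t))_{t \in \IR}$, so that $U(t) = e^{-tB}$. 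By Stone's theorem $B$ is skew-adjoint, hence normal with $\sigma(B) \subseteq i\IR$.

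Second, I would record the functional calculus of $B$. Since $B$ is normal with spectrum on the imaginary axis, the Borel functional calculus built from its spectral measure yields, for every $\sigma > \frac{\pi}{2}$ and every $f \in H_0^{\infty}(\Sigma_{\sigma})$, the estimate
\[ \norm{f(B)} = \sup_{\lambda \in \sigma(B)} \abs{f(\lambda)} \le \norm{f}_{H^{\infty}(\Sigma_{\sigma})}, \]
because $\sigma(B) \setminus \{0\} \subseteq \Sigma_{\sigma}$ for such $\sigma$. One checks that on normal $B$ this spectral-calculus operator coincides with the one given by the contour integral, so that $B$ possesses a contractive $H^{\infty}(\Sigma_{\sigma})$-calculus for every angle $\sigma > \frac{\pi}{2}$.

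Third -- and this is the technical heart -- I would transfer this boundedness to $A$ by pushing the dilation identity through the resolvents and then through the defining Cauchy integral. For $\Re \lambda < 0$ both negative generators admit the Laplace representation $R(\lambda, A) = -\int_0^{\infty} e^{\lambda t} T(t)\,dt$ and $R(\lambda,B) = -\int_0^{\infty} e^{\lambda t}U(t)\,dt$, the integrals converging since $T$ and $U$ are contractive. Inserting $T(t) = PU(t)J$ and pulling the bounded maps $P$ and $J$ out of the Bochner integral gives the resolvent dilation
\[ R(\lambda, A) = P\,R(\lambda, B)\,J \qquad (\Re \lambda < 0). \]
Now fix $\sigma \in (\frac{\pi}{2}, \pi)$ and $f \in H_0^{\infty}(\Sigma_{\sigma})$, and choose a contour $\partial \Sigma_{\sigma'}$ with $\sigma' \in (\frac{\pi}{2}, \sigma)$; since $\cos \sigma' < 0$ the entire punctured contour lies in the open left half-plane, so the resolvent identity is available on it. Substituting it into
\[ f(A) = \frac{1}{2\pi i} \int_{\partial \Sigma_{\sigma'}} f(\lambda) R(\lambda, A)\, d\lambda \]
and again extracting $P$ and $J$ from the absolutely convergent integral yields $f(A) = P\,f(B)\,J$. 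As $\norm{P} \le 1$ and $\norm{J} = 1$, the estimate of the second step gives $\norm{f(A)} \le \norm{f}_{H^{\infty}(\Sigma_{\sigma})}$, whence $\omega_{H^{\infty}}(A) \le \frac{\pi}{2}$.

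Finally, since $-A$ generates a contractive $C_0$-semigroup, $A$ is m-accretive and therefore $\omega(A) \le \frac{\pi}{2}$. Combining $\omega(A) \le \omega_{H^{\infty}}(A) \le \frac{\pi}{2}$ with the Hilbert space fact $\omega_{H^{\infty}}(A) = \omega(A)$ (McIntosh's theorem, \cite{McI86}) yields the asserted identity $\omega_{H^{\infty}}(A) = \omega(A) \le \frac{\pi}{2}$. The main obstacle I anticipate is the careful justification of the passage $f(A) = P f(B) J$: one must match the region of validity of the resolvent dilation -- the left half-plane, forced by the angle $\frac{\pi}{2}$ of the unitary generator's spectrum -- with the contour of the $H_0^{\infty}$-integral, and verify that $P$ and $J$ may legitimately be commuted past both the Laplace and the Cauchy integrals.
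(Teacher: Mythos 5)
Your proposal is correct and follows essentially the same route as the paper: the paper's one-line argument is precisely the Sz.-Nagy dilation of Theorem~\ref{thm:dilation_hilbert_space}, the spectral theory of the (skew-adjoint) generator of the unitary group giving a bounded $H^{\infty}(\Sigma_{\sigma})$-calculus for every $\sigma > \frac{\pi}{2}$, transfer to $A$ through the dilation, and McIntosh's Hilbert space equality $\omega_{H^{\infty}}(A) = \omega(A)$. You merely make explicit the transfer step $f(A) = P f(B) J$ via the Laplace representation of the resolvent and the contour integral, which the paper leaves implicit and which you justify correctly.
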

			
			It is now time to give a precise definition of semigroup dilations on general Banach spaces. We follow the terminology used in~\cite{ArhMer14}.
			
			\begin{definition}
				Let $(T(t))_{t \ge 0}$ be a $C_0$-semigroup on some Banach space $X$. Further let $\mathcal{X}$ denote a class of Banach spaces. We say that
					\begin{def_enum}
						\item $(T(t))_{t \ge 0}$ has a \emph{strict dilation} in $\mathcal{X}$ if for some $Y$ in $\mathcal{X}$ there are contractive linear operators $J\colon X \to Y$ and $Q\colon Y \to X$ and an isometric $C_0$-group $(U(t))_{t \in \IR}$ on $Y$ such that
							\[ T(t) = QU(t)J \qquad \text{for all } t \ge 0. \]
						\item $(T(t))_{t \ge 0}$ has a \emph{loose dilation} in $\mathcal{X}$ if for some $Y$ in $\mathcal{X}$ there are bounded linear operators $J\colon X \to Y$ and $Q\colon Y \to X$ and a bounded $C_0$-group $(U(t))_{t \in \IR}$ on $Y$ such that
							\[ T(t) = QU(t)J \qquad \text{for all } t \ge 0. \]
					\end{def_enum}
			\end{definition}
			
			Note that in the above terminology Theorem~\ref{thm:dilation_hilbert_space} shows that every contractive $C_0$-semigroup on a Hilbert space has a strict dilation in the class of all Hilbert spaces. The main connection with the other regularity properties is the following observation.
			
			\begin{proposition}\label{prop:hinfty_and_dilation}
				Let $A$ be a sectorial operator on a Banach space $X$ such that $-A$ generates a $C_0$-semigroup which has a loose dilation in the class of all UMD-Banach spaces. Then $A$ has a bounded $H^{\infty}$-calculus with $\omega_{H^{\infty}}(A) \le \frac{\pi}{2}$.
			\end{proposition}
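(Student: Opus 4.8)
\emph{Proof plan.} The plan is to transfer the $H^{\infty}$-calculus from the dilating group down to $A$ through the intertwining maps $J$ and $Q$, thereby reducing the statement to the known fact that the generator of a bounded $C_0$-group on a UMD-space has a bounded $H^{\infty}$-calculus of angle $\frac{\pi}{2}$.

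First I would fix the data of the loose dilation: a UMD-space $Y$, bounded operators $J\colon X\to Y$ and $Q\colon Y\to X$, and a bounded $C_0$-group $(U(t))_{t\in\IR}$ on $Y$ with generator $iB$, satisfying $T(t)=QU(t)J$ for $t\ge 0$, where $(T(t))_{t\ge 0}$ is the semigroup generated by $-A$. Two elementary observations set the stage. Since $\norm{T(t)}\le\norm{Q}\norm{J}\sup_{s}\norm{U(s)}<\infty$, the semigroup $T$ is bounded, so $A$ is sectorial with $\omega(A)\le\frac{\pi}{2}$. Moreover, as $(U(t))_{t\in\IR}$ is a group, both $iB$ and $-iB$ generate bounded $C_0$-semigroups; hence $\sigma(-iB)\subseteq i\IR$ and the Hille--Yosida bounds give $\norm{R(\lambda,-iB)}\le M/\abs{\Re\lambda}$ for $\Re\lambda\neq 0$, which makes $-iB$ sectorial of angle $\frac{\pi}{2}$.

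Next I would establish a resolvent factorisation on a suitable contour. Fix $\sigma>\frac{\pi}{2}$ and choose $\sigma'\in(\frac{\pi}{2},\sigma)$, so that the two rays of $\partial\Sigma_{\sigma'}$ lie in the open left half-plane, i.e.\ $\Re\lambda<0$ along the contour. For such $\lambda$ the Laplace representation of the resolvents converges, and a short computation using $T(t)=QU(t)J$ yields
\[ R(\lambda,A)=-\int_0^\infty e^{\lambda t}T(t)\,dt=-Q\Bigl(\int_0^\infty e^{\lambda t}U(t)\,dt\Bigr)J=Q\,R(\lambda,-iB)\,J. \]
Inserting this into the Cauchy integral defining the calculus (legitimate since $\omega(A)\le\frac{\pi}{2}<\sigma'<\sigma$), pulling the bounded operators $Q$ and $J$ out of the norm-convergent integral, and recognising the inner integral as $f(-iB)$, we obtain for every $f\in H_0^\infty(\Sigma_\sigma)$ that
\[ f(A)=\frac{1}{2\pi i}\int_{\partial\Sigma_{\sigma'}}f(\lambda)R(\lambda,A)\,d\lambda=Q\,f(-iB)\,J, \]
whence $\norm{f(A)}\le\norm{Q}\norm{J}\,\norm{f(-iB)}$. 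The decay of $f\in H_0^\infty$ together with the resolvent bound from the previous paragraph guarantees absolute convergence, so this step is routine.

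The final and decisive step is to bound $\norm{f(-iB)}$ by $\norm{f}_{H^\infty(\Sigma_\sigma)}$, i.e.\ to invoke that the sectorial operator $-iB$ has a bounded $H^\infty(\Sigma_\sigma)$-calculus for every $\sigma>\frac{\pi}{2}$. This is exactly the point where the UMD-hypothesis on $Y$ is indispensable: it is the theorem of Hieber--Pr\"uss, proved via the transference principle from the boundedness of the vector-valued Hilbert transform on $L_2(\IR;Y)$ (see~\cite{KunWei04} and~\cite{Haa06}). Granting it, we get $\norm{f(A)}\le\norm{Q}\norm{J}\,C_\sigma\norm{f}_{H^\infty(\Sigma_\sigma)}$ for all $f\in H_0^\infty(\Sigma_\sigma)$, so $A$ has a bounded $H^\infty(\Sigma_\sigma)$-calculus and thus $\omega_{H^\infty}(A)\le\sigma$; letting $\sigma\downarrow\frac{\pi}{2}$ gives $\omega_{H^\infty}(A)\le\frac{\pi}{2}$. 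I expect the main obstacle to be precisely this transference result, and a secondary technical nuisance is that $0$ may belong to $\sigma(-iB)$, so $-iB$ need not be injective with dense range; however, the $H_0^\infty$-bound obtained from the Cauchy integral is insensitive to this, and one can otherwise pass to $\epsilon-iB$ and let $\epsilon\downarrow 0$.
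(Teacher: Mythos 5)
Your proof is correct and follows essentially the same route as the paper: the paper's one-line argument invokes the Coifman--Weis transference principle to reduce to the shift group on $L_p(\IR;Y)$ together with the vector-valued Mikhlin theorem, which is exactly the content of the Hieber--Pr\"uss theorem you quote for the group generator, while your factorisation $f(A)=Q\,f(-iB)\,J$ merely spells out the routine dilation-transfer step the paper leaves implicit. Your treatment of the possible non-injectivity of $-iB$ is also sound, since the $H_0^{\infty}$ contour bound indeed uses only the resolvent estimate on the rays.
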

			
			This follows from the transference principle of R.R.~Coifman and G.~Weis developed in~\cite{CoiWei76} which reduces the assertion to the case of the vector-valued shift group on $L_p(\IR;Y)$ for some UMD-space $Y$ which can be shown directly with the help of the vector-valued Mikhlin multiplier theorem~\cite[Proposition~3]{Zim89}.
			
			On $L_p$-spaces for $p \in (1,\infty)$ one has the following characterization of strict dilations. A bounded linear operator $T\colon L_p(\Omega) \to L_p(\Omega')$ is called a \emph{subpositive contraction} if there exists a positive contraction $S\colon L_p(\Omega) \to L_p(\Omega')$, that is $\norm{S} \le 1$ and $f \ge 0 \Rightarrow Sf \ge 0$, such that $\abs{Tf} \le S\abs{f}$ for all $f \in L_p(\Omega)$.
			
			\begin{theorem}
				Let $(T(t))_{t \ge 0}$ be a $C_0$-semigroup on some $\sigma$-finite $L_p$-space for $p \in (1,\infty) \setminus \{2\}$. Then $(T(t))_{t \ge 0}$ has a strict dilation in the class of all $\sigma$-finite $L_p$-spaces if and only if $(T(t))_{t \ge 0}$ is a semigroup consisting of subpositive contractions. 
			\end{theorem}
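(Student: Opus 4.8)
The plan is to prove the two implications by quite different means: necessity rests on the rigidity of $L_p$-isometries for $p \ne 2$, whereas sufficiency is a genuine dilation construction.

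\emph{Necessity.} Assume $T(t) = QU(t)J$ is a strict dilation with $(U(t))_{t\in\IR}$ an isometric $C_0$-group on $Y = L_p(\tilde\Omega)$. The first step is to exploit the normalization $T(0) = \Id_X$: since $U(0) = \Id_Y$ it forces $QJ = \Id_X$, and then $\norm{x} = \norm{QJx} \le \norm{Jx} \le \norm{x}$ shows that $J$ is an isometric embedding. Dualizing $QJ = \Id_X$ gives $J^*Q^* = \Id_{X^*}$; using $\sigma$-finiteness to identify $X^* = L_{p'}(\Omega)$ and $Y^* = L_{p'}(\tilde\Omega)$ with $p' \in (1,\infty)\setminus\{2\}$, the same argument shows that $Q^*$ is an isometric embedding. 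The crucial input is now \emph{Lamperti's theorem}: for $p \ne 2$ every into-isometry of an $L_p$-space is a disjointness-preserving weighted composition operator. Applied to $J$, to $Q^*$, and to each surjective isometry $U(t)$, it yields positive moduli $\abs{J}$, $\abs{Q^*}$, $\abs{U(t)}$ (each itself an isometry) with $\abs{Rf} = \abs{R}\abs{f}$ whenever $R$ is disjointness-preserving.

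It remains to dominate $T(t)$ pointwise. For the factor $Q$ I would pass to the adjoint: testing $\abs{Qg}$ against $h \ge 0$ and using $\abs{Q^*h'} \le \abs{Q^*}\abs{h'}$ gives the estimate $\abs{Qg} \le \abs{Q^*}^*\abs{g}$ for $g \in Y$, where $\abs{Q^*}^*$ is a positive contraction from $L_p(\tilde\Omega)$ to $L_p(\Omega)$. Chaining the three estimates then yields
\[ \abs{T(t)f} = \abs{Q\,U(t)\,Jf} \le \abs{Q^*}^* \abs{U(t)Jf} = \abs{Q^*}^* \abs{U(t)} \abs{J} \abs{f} \eqqcolon S(t)\abs{f}, \]
and $S(t)$ is a positive contraction, being a composition of positive contractions. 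Hence every $T(t)$ is a subpositive contraction, as claimed.

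\emph{Sufficiency.} This is the substantial direction and really amounts to a dilation theorem, so here I would build on existing machinery rather than construct the group by hand. The model result is Fendler's theorem, which dilates any positive contraction $C_0$-semigroup on an $L_p$-space to a \emph{positive} isometric $C_0$-group on a larger $\sigma$-finite $L_p$-space; the strategy is to reduce the subpositive case to it. The first step is to replace the $T(t)$ by dominating positive contractions: subpositivity makes each $T(t)$ regular with a least positive dominant $\abs{T(t)}$, but these are only submultiplicative, $\abs{T(s+t)} \le \abs{T(s)}\abs{T(t)}$, rather than forming a semigroup. I expect the main obstacle to be precisely this point—assembling from the $T(t)$ a genuine positive contraction $C_0$-semigroup that can be dilated while retaining enough lattice and sign data to recover the operators $T(t)$ inside the dilation space. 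Granting such a reduction, one feeds the dominating semigroup into Fendler's construction (itself the second hard ingredient, being far more delicate in the continuous parameter than the single-operator Akcoglu dilation) and recovers the isometric—no longer necessarily positive—group $(U(t))$ together with the contractions $J$ and $Q$ realizing $T(t) = QU(t)J$.
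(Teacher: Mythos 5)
Your necessity argument is essentially the paper's own proof: the paper observes that $J$ and $Q^*$ are isometries and then deduces subpositivity of $J$, of each $U(t)$, and of $Q$ (the last via its adjoint) from the Banach--Lamperti theorem on the structure of $L_p$-isometries for $p \ne 2$; your pointwise chain $\abs{T(t)f} \le \abs{Q^*}^*\abs{U(t)}\abs{J}\abs{f}$, with $\abs{Q^*}^*$ a positive contraction, is a correct and slightly more detailed filling-in of exactly that sketch, including the duality step that needs $\sigma$-finiteness to identify $L_p(\Omega)^*$ with $L_{p'}(\Omega)$.

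The sufficiency direction, however, contains a genuine gap, and it is the one you flagged yourself. You plan to reduce to Fendler's theorem for \emph{positive} contraction semigroups by first dominating $(T(t))_{t\ge 0}$ by a positive contraction $C_0$-semigroup, note correctly that the least positive dominants $\abs{T(t)}$ are only submultiplicative, and then proceed ``granting such a reduction''. That grant is precisely the missing idea: submultiplicativity does not yield a semigroup, and there is no general procedure that manufactures a positive contraction $C_0$-semigroup $(S(t))$ with $\abs{T(t)f} \le S(t)\abs{f}$ out of the individual dominants, so the argument as written does not close. The paper sidesteps the issue entirely: Fendler's dilation theorem, in the form cited there~\cite{Fen97}, applies \emph{directly} to $C_0$-semigroups of subpositive contractions. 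Subpositivity is absorbed inside Fendler's construction---at the level of the discrete-time dilation of a single subpositive contraction, where the Akcoglu--Sucheston machinery extends to the subpositive class---and is then transferred to continuous time by his approximation arguments, rather than by first dominating the whole semigroup. So the correct repair of your proof is not to build the dominating semigroup but to invoke (or reprove) Fendler's theorem in its subpositive version.
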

			
			Every $C_0$-semigroup of subpositive contractions on $L_p$ for $p \in (1,\infty)$ has a strict dilation by Fendler's dilation theorem~\cite{Fen97}. For the converse it suffices to show that for a strict dilation $T(t) = QU(t)J$ all the operators $U(t)$, $J$ and $Q$ are subpositive contractions (notice that $J$ and $Q^*$ are isometries). For the first two this essentially follows from the Banach--Lamperti theorem~\cite[Theorem~3.2.5]{FleJam03} on the structure of isometries on $L_p$-spaces, for the third as well if applied to the adjoint $Q^*$. However, there is no characterization of semigroups on $L_p$ with a loose dilation.
			
			\begin{problem}
				Characterize those semigroups on $L_p$ which have a loose dilation in the class of all $L_p$-spaces.
			\end{problem}
				
			For a more concrete discussion in the setting of discrete semigroups see~\cite[Section~5]{ArhMer14}. We also do not know whether the following extension of Fendler's dilation theorem to UMD-Banach lattices holds.
			
			\begin{problem}
				Does every $C_0$-semigroup of positive contractions on a UMD-Banach lattice have a strict / loose dilation in the class of all UMD-spaces?
			\end{problem}
			
			In the negative direction one knows the following: there exists a completely positive contraction, i.e.\ a discrete semigroup, on a noncommutative $L_p$-space which does not have a strict dilation in the class of all noncommutative $L_p$-spaces~\cite[Corollary~4.4]{JunMer07}. For a weak discrete counterexample in the setting of $L_p(L_q)$-spaces see~\cite[Contre exemple~6.1]{GueRay88}.
			
			Recall that by Proposition~\ref{prop:hinfty_and_dilation} a $C_0$-semigroup $(T(t))_{t \ge 0}$ with generator $-A$ that has a loose dilation in the class of all UMD-spaces has a bounded $H^{\infty}$-calculus with $\omega_{H^{\infty}}(A) \le \frac{\pi}{2}$. The following theorem by A.~Fröhlich and L.~Weis~\cite[Corollary~5.4]{FroWei06} is a partial converse. Its proof uses square function techniques which we do not cover here, for an overview we refer to~\cite{LeM07}.
			
			\begin{theorem}
				Let $A$ be a sectorial operator on a UMD-space $X$ with $\omega_{H^{\infty}}(A) < \frac{\pi}{2}$. Then the semigroup $(T(t))_{t \ge 0}$ generated by $-A$ has a loose dilation to the space $L_2([0,1];X)$.
			\end{theorem}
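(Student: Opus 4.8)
The plan is to build the dilation by hand out of the semigroup and to read the dilation relation off an elementary reproducing formula for $T(t) = e^{-tA}$, pushing all the genuine difficulty into two square function estimates. Since $\omega(A) \le \omega_{H^{\infty}}(A) < \frac{\pi}{2}$, the operator $-A$ generates a bounded analytic $C_0$-semigroup, so that $A^{1/2}e^{-sA}x$ is well defined for all $x \in X$ and $s > 0$. I would take the dilation space to be the Bochner space $L_2(\IR;X)$, which is again UMD and is isometrically isomorphic to $L_2([0,1];X)$ (transport $L_2(\IR;X)$ through a smooth diffeomorphism $\phi\colon (0,1) \to \IR$ via $f \mapsto (f\circ\phi)\normalabs{\phi'}^{1/2}$). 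The group $(U(t))_{t \in \IR}$ is the left translation group $(U(t)g)(s) = g(s+t)$, which is isometric and hence a bounded $C_0$-group, and the embeddings are the square functions of $A$ and of $A^*$:
\[ (Jx)(s) = \sqrt{2}\,A^{1/2}e^{-sA}x \ \ (s > 0), \quad (Jx)(s) = 0 \ \ (s \le 0), \qquad Qg = \sqrt{2}\int_0^{\infty} A^{1/2}e^{-sA}g(s)\,ds. \]

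Granting for the moment that $J$ and $Q$ are bounded, the dilation identity is a one-line computation. For $t \ge 0$ one has $(U(t)Jx)(s) = \sqrt{2}\,A^{1/2}e^{-(s+t)A}x$ for every $s > 0$, so that, using the semigroup law and the elementary identity $\int_0^{\infty} A e^{-2sA}x\,ds = \tfrac{1}{2}x$ (valid on a dense subspace and extended by boundedness),
\[ QU(t)Jx = 2\int_0^{\infty} A e^{-(2s+t)A}x\,ds = 2\,e^{-tA}\int_0^{\infty} A e^{-2sA}x\,ds = e^{-tA}x = T(t)x. \]
Thus $T(t) = QU(t)J$ for all $t \ge 0$, which is exactly a loose dilation: the group is isometric while $J$ and $Q$ are merely bounded. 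Note that only $t \ge 0$ enters, so the one-sidedness of the semigroup causes no difficulty, and the whole construction is explicit apart from the boundedness of the two maps.

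The entire weight of the theorem therefore rests on the boundedness of $J$ and $Q$, i.e.\ on the square function estimate $\norm{s \mapsto A^{1/2}e^{-sA}x} \le C\norm{x}$ and its dual counterpart for $A^*$ (which is available since $X^*$ is again UMD and $A^*$ inherits the bounded $H^{\infty}$-calculus with the same angle). This is the step I expect to be the main obstacle. On a Hilbert space these are precisely McIntosh's characterisation of the bounded $H^{\infty}$-calculus; on a general UMD-space the naive Bochner estimate $\int_0^{\infty}\normalnorm{A^{1/2}e^{-sA}x}^2\,ds \le C^2\norm{x}^2$ may fail and must be replaced by Gaussian, or — using that UMD-spaces have finite cotype — Rademacher square functions living in $\Rad(X) \subseteq L_2([0,1];X)$. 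Producing these intrinsically non-Hilbertian square functions from the bounded $H^{\infty}$-calculus (the $\mathcal{R}$-boundedness consequences in the spirit of Theorem~\ref{thm:h_infty_generates_R_bounded_sets}) and matching them to the genuine Bochner geometry of $L_2([0,1];X)$ is the real content of the square function techniques of Fröhlich and Weis and is exactly where the UMD-hypothesis becomes indispensable. The restriction $\omega_{H^{\infty}}(A) < \frac{\pi}{2}$ enters already at the outset, by making $-A$ the generator of an analytic semigroup so that $A^{1/2}e^{-sA}$ exists at all, and again in keeping the relevant Fourier multipliers admissible, as in the transference argument behind Proposition~\ref{prop:hinfty_and_dilation}.
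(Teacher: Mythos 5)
Your skeleton --- translation group, square-function embeddings, and the reproducing formula $\int_0^{\infty} A e^{-2sA}x\,ds = \tfrac{1}{2}x$ --- is indeed the classical construction behind this theorem, and your verification of the dilation identity $T(t) = QU(t)J$ is correct. The genuine gap is that the step you defer is not merely the hard part: with your concrete choices it is \emph{false} on general UMD spaces, so the construction as set up cannot prove the theorem. The boundedness of $J\colon X \to L_2(\IR;X)$ is the Bochner-norm square function estimate $\int_0^{\infty} \normalnorm{A^{1/2}e^{-sA}x}^2\,ds \le C^2\norm{x}^2$, and this fails already for the simplest operators with a bounded $H^{\infty}$-calculus of angle $0$. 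Take $X = \ell_p$ with $p \in (2,\infty)$ (a UMD space) and the Schauder multiplier $Ae_n = 2^n e_n$ with respect to the standard unconditional basis, which has $\omega_{H^{\infty}}(A) = 0$. For $x = \sum_{n=1}^N e_n$ one has $\norm{x}_p^2 = N^{2/p}$, while for $s \in [2^{-k-1},2^{-k}]$, $1 \le k \le N$, the single coordinate $n = k$ already gives $\normalnorm{A^{1/2}e^{-sA}x}_p^2 \ge e^{-2}\,2^k$, so that $\int_0^{\infty}\normalnorm{A^{1/2}e^{-sA}x}_p^2\,ds \ge e^{-2}N/2$. Hence $J$ is unbounded for $p > 2$; by duality (your $Q$ is bounded exactly when the same estimate holds for $A^*$ on $X^* = \ell_{p'}$) the map $Q$ is unbounded for $p < 2$. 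So on $L_p$-type spaces with $p \ne 2$ at least one of your two maps always fails, and no constant-chasing can repair the Bochner-space construction; it works as written only in the Hilbert space case, where it is McIntosh/Le Merdy's argument.

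The repair is not a sharper estimate but a different dilation space, and this changes every ingredient your sketch relies on. The paper itself gives no proof --- it cites Fr\"ohlich--Weis~\cite{FroWei06} and refers to~\cite{LeM07} for the square function techniques --- and the actual argument replaces $L_2(\IR;X)$ by the generalized (Gaussian) square function space $\gamma(L_2(\IR),X)$: there the bounded $H^{\infty}$-calculus with angle $< \frac{\pi}{2}$ does yield the boundedness of $J$, duality for $Q$ uses that UMD spaces have nontrivial type, and the translation group lifts to an \emph{isometric} $C_0$-group on $\gamma(L_2(\IR),X)$ because $\gamma$-norms are invariant under unitaries of the underlying scalar Hilbert space. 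One then lands in $L_2([0,1];X)$, as the statement requires, because UMD implies K-convexity, so the Gaussian subspace is complemented in $L_2(\Omega;X) \cong L_2([0,1];X)$ and the lifted group extends boundedly (not isometrically) by the identity on the complement --- which is precisely why only a \emph{loose} dilation is obtained. Your closing paragraph correctly senses that the passage from Rademacher/Gaussian square functions to the Bochner geometry of $L_2([0,1];X)$ is where the substance lies, but since your explicit $J$ and $Q$, the dilation identity, and the isometric identification $L_2(\IR;X) \cong L_2([0,1];X)$ all live in the Bochner setting that provably breaks down, the proposal has a genuine gap at its central step rather than a deferred-but-routine lemma.
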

			
			This shows that on UMD-spaces the existence of loose dilations in the class of UMD-spaces and of a bounded $H^{\infty}$-calculus are equivalent under the restriction $\omega_R(A) < \frac{\pi}{2}$. However, we will see in Section~\ref{sec:pisier} that there exists a semigroup generator $-A$ on a Hilbert space with $\omega_R(A) = \omega(A) = \frac{\pi}{2}$ that does not have a loose dilation in the class of all Hilbert spaces. So in general the existence of a dilation is a strictly stronger property than the existence of a bounded $H^{\infty}$-calculus.
				
	\section{Counterexamples I: The Schauder Multiplier Method}\label{sec:dilations}
					
		In this section we develop the most fruitful known method to construct systematically counterexamples: the Schauder multiplier method. This method was first used in~\cite{BaiCle91} and \cite{Ven93} in the context of sectorial operators to give examples of sectorial operators without bounded imaginary powers. After dealing with $H^{\infty}$-calculus and bounded imaginary powers, we present a self-contained example of a sectorial operator on $L_p$ which is not $\mathcal{R}$-sectorial.
				
			\subsection{Schauder Multipliers}
			
			We start our journey by giving the definition of Schauder multipliers and by studying its fundamental properties. After that we show how Schauder multipliers can be used to construct (analytic) semigroups. From now on we need some background from Banach space theory. We refer to \cite{AlbKal06}, \cite{FHH+11}, \cite{LinTza77} and~\cite{Sin70}.
			
				\begin{definition}[Schauder Multiplier]\index{Schauder multiplier}
					Let $(e_m)_{m \in \IN}$ be a Schauder basis for a Banach space $X$. For a sequence $(\gamma_m)_{m \in \IN} \subset \IC$ the operator $A$ defined by
						\begin{align*}
							D(A) & = \biggl\{ x = \sum_{m=1}^{\infty} a_m e_m: \sum_{m=1}^{\infty} \gamma_m a_m e_m \text{ exists} \biggr\} \\
							A \biggl( \sum_{m=1}^{\infty} a_m e_m \biggr) & = \sum_{m=1}^{\infty} \gamma_m a_m e_m
						\end{align*}
					is called the \emph{Schauder multiplier} associated to $(\gamma_m)_{m \in \IN}$.
				\end{definition}
				
				\subsubsection{Basic Properties of Schauder Multipliers}
				
				We now discuss some properties of Schauder multipliers whose proofs can be found in \cite[Section~9.1.1]{Haa06} and \cite{Ven93}.
				
				\begin{proposition}
					The Schauder multiplier $A$ associated to a sequence $(\gamma_m)_{m \in \IN}$ is a densely defined closed linear operator.
				\end{proposition}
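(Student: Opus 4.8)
The plan is to verify the three claimed properties—linearity, dense definition, and closedness—separately, with essentially all of the work concentrated in the last one. For linearity I would use the uniqueness of basis expansions: for $x = \sum_m a_m e_m$ and $y = \sum_m b_m e_m$ in $D(A)$ and a scalar $\mu$, the expansion of $x + \mu y$ is $\sum_m (a_m + \mu b_m) e_m$, and since $\sum_m \gamma_m a_m e_m$ and $\sum_m \gamma_m b_m e_m$ both converge, so does their combination $\sum_m \gamma_m (a_m + \mu b_m) e_m$; hence $x + \mu y \in D(A)$ and $A(x + \mu y) = Ax + \mu Ay$. For dense definition I would note that every finite linear combination $\sum_{m=1}^N a_m e_m$ lies in $D(A)$, because the defining series $\sum_{m=1}^N \gamma_m a_m e_m$ is then a finite sum and trivially converges; thus $D(A) \supseteq \linspan\{e_m : m \in \IN\}$, which is dense in $X$ by the very definition of a Schauder basis.

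The heart of the matter is closedness, and the tool I would bring in is the continuity of the coordinate functionals $e_m^* \in X^*$ associated with the basis, defined by $e_m^*(\sum_k a_k e_k) = a_m$. Their boundedness is the standard consequence of the uniform boundedness of the partial-sum projections $P_N x = \sum_{m=1}^N e_m^*(x) e_m$ (the basis constant). Given sequences with $x_n \to x$ and $Ax_n \to y$, I would pass to coordinates. Writing $a_m^{(n)} = e_m^*(x_n)$ and $a_m = e_m^*(x)$, continuity of each $e_m^*$ gives $a_m^{(n)} \to a_m$ for every $m$. On the other hand, the $m$-th coordinate of $Ax_n$ is exactly $e_m^*(Ax_n) = \gamma_m a_m^{(n)}$, so applying $e_m^*$ to $Ax_n \to y$ yields $e_m^*(y) = \lim_n \gamma_m a_m^{(n)} = \gamma_m a_m$. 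Since $y \in X$ admits the convergent expansion $y = \sum_m e_m^*(y) e_m$, this says precisely that $\sum_m \gamma_m a_m e_m$ converges with sum $y$. By the definition of $A$ this is exactly the assertion that $x \in D(A)$ and $Ax = y$, which proves closedness.

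I do not expect a serious obstacle here; the single point that requires care—and the reason closedness is not entirely formal—is that one must know the coordinate functionals are genuinely continuous, not merely well-defined as linear maps. This is where the Schauder structure of the basis enters, through the uniform bound on the projections $P_N$, and it is the only place the full strength of the hypothesis is used. Once that continuity is available, the closedness argument is a coordinate-wise interchange of limits, and it has the pleasant feature that it simultaneously produces the convergence of the series $\sum_m \gamma_m a_m e_m$ to $y$, so no separate verification of $x \in D(A)$ is needed.
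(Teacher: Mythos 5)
Your proof is correct: the paper itself omits the argument, deferring to \cite[Section~9.1.1]{Haa06} and \cite{Ven93}, and the proof given there is exactly yours --- density via finite linear combinations, and closedness by applying the (continuous) coordinate functionals $e_m^*$ to $x_n \to x$ and $Ax_n \to y$ to identify $e_m^*(y) = \gamma_m e_m^*(x)$, whereupon the basis expansion of $y$ itself witnesses the convergence of $\sum_m \gamma_m a_m e_m$. You also correctly isolate the one nontrivial ingredient, the continuity of the $e_m^*$ coming from the uniform boundedness of the partial-sum projections.
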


				A central problem in the theory of Schauder multipliers is to determine for a given Schauder basis $(e_m)_{m \in \IN}$ the set of all sequences $(\gamma_m)_{m \in \IN}$ for which the associated Schauder multiplier is bounded. In general, it is an extremely difficult problem to determine this space exactly. For example, the trigonometric basis is a Schauder basis for $L_p([0,1])$ for $p \in (1, \infty)$. In this particular case the above problem asks for a characterization of all bounded Fourier multipliers on $L_p$.
				
				However, some elementary general properties of this sequence space can be obtained easily. In what follows let $BV$ be the Banach space of all sequences with bounded variation.
				
				\begin{proposition}\label{prop:sm_prop}
					Let $(e_m)_{m \in \IN}$ be a Schauder basis for a Banach space $X$. Then there exists a constant $K \ge 0$ such that for every $(\gamma_m)_{m \in \IN} \in BV$ the Schauder multiplier $A$ associated to $(\gamma_m)_{m \in \IN}$ with respect to $(e_m)_{m \in \IN}$ is bounded and satisfies
						\[
							\norm{A} \le K \norm{(\gamma_m)_{m \in \IN}}_{BV}.
						\]
					Conversely, if $A$ is a bounded Schauder multiplier associated to some sequence $(\gamma_m)_{m \in \IN}$, then $(\gamma_m)_{m \in \IN}$ is bounded.			
				\end{proposition}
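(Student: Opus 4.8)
The plan is to reduce everything to the uniform boundedness of the canonical partial sum projections $P_n(\sum_{m} a_m e_m) = \sum_{m=1}^n a_m e_m$ attached to the Schauder basis, whose common bound $K_b \coloneqq \sup_n \norm{P_n}$ (the basis constant) is finite, combined with a summation by parts. The role of the bounded variation assumption is precisely to make the summation by parts produce a convergent series whose terms are controlled by the $P_n$.

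First I would prove the forward estimate for a finitely supported vector $x = \sum_{m=1}^N a_m e_m$. Writing $S_m \coloneqq P_m x$ so that $a_m e_m = S_m - S_{m-1}$ with $S_0 \coloneqq 0$, Abel summation gives for every $n \le N$
\[ \sum_{m=1}^n \gamma_m a_m e_m = \gamma_n S_n + \sum_{m=1}^{n-1} (\gamma_m - \gamma_{m+1}) S_m. \]
Since $\norm{S_m} \le K_b \norm{x}$ for all $m$, the triangle inequality yields
\[ \biggnorm{\sum_{m=1}^n \gamma_m a_m e_m} \le K_b \norm{x} \biggl( \abs{\gamma_n} + \sum_{m=1}^{n-1} \abs{\gamma_m - \gamma_{m+1}} \biggr) \le K_b \norm{(\gamma_m)_{m \in \IN}}_{BV} \norm{x}, \]
the bracketed quantity being dominated by the $BV$-norm (a different normalization of that norm only changes the final constant by an absolute factor).

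Next I would pass to an arbitrary $x = \sum_m a_m e_m \in X$ and simultaneously show $x \in D(A)$. The same Abel identity holds with $S_m = P_m x$, and letting $n \to \infty$ one has $S_n \to x$ and $\gamma_n \to \gamma_{\infty} \coloneqq \lim_m \gamma_m$ (a $BV$-sequence is Cauchy, hence convergent), so $\gamma_n S_n \to \gamma_{\infty} x$; moreover $\sum_m (\gamma_m - \gamma_{m+1}) S_m$ converges absolutely because $\sum_m \abs{\gamma_m - \gamma_{m+1}} \norm{S_m} \le K_b \norm{x} \sum_m \abs{\gamma_m - \gamma_{m+1}} < \infty$. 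Hence the partial sums $\sum_{m=1}^n \gamma_m a_m e_m$ converge, i.e.\ $x \in D(A)$, and passing to the limit in the finite estimate gives $\norm{Ax} \le K_b \norm{(\gamma_m)_{m \in \IN}}_{BV} \norm{x}$. This proves the first assertion with $K = K_b$.

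For the converse I would simply test $A$ on basis vectors. Each $e_m$ is finitely supported, so $A e_m = \gamma_m e_m$ directly from the definition of the Schauder multiplier, whence $\abs{\gamma_m} = \norm{A e_m} / \norm{e_m} \le \norm{A}$ for every $m$; thus $(\gamma_m)_{m \in \IN}$ is bounded by $\norm{A}$. The computation is elementary once the summation by parts is in place; the only steps needing care — and the points where the Schauder basis hypothesis is genuinely used — are the uniform bound $\norm{S_m} \le K_b \norm{x}$, which is what converts the telescoped coefficient differences into the $BV$-norm, and the verification that the telescoped series converges so that $x$ actually lies in $D(A)$. I do not expect any obstacle beyond correctly handling the boundary term $\gamma_n S_n$ and its limit $\gamma_{\infty} x$.
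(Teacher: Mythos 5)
Your proof is correct and is essentially the argument the paper intends: the paper omits the proof, deferring to \cite[Section~9.1.1]{Haa06} and \cite{Ven93}, where exactly this Abel-summation computation with the uniform bound $\norm{P_m x} \le K_b \norm{x}$ from the basis constant is carried out, including the passage to general $x$ via convergence of $\gamma_n S_n$ and of the telescoped series to conclude $D(A) = X$, and the converse by testing on the basis vectors. The only cosmetic caveat is the one you already flag yourself: depending on how $\norm{\cdot}_{BV}$ is normalized, the boundary term $\abs{\gamma_n}$ forces a constant like $2K_b$ rather than $K = K_b$ exactly, which is immaterial for the statement.
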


				\begin{remark}\label{rem:bv} In general the above result is optimal. For if $X = BV$, then $(e_m)_{m \in \IN_0}$ defined by $e_0$ as the constant sequence $\mathds{1}$ and $e_m = (\delta_{mn})_{n \in \IN}$ form a conditional basis of $BV$ and the multiplier associated to a sequence $(\gamma_m)_{m \in \IN_0}$ is bounded if and only if $(\gamma_m) \in BV$. 
				\end{remark}
				
				\subsubsection{Schauder Multipliers as Generators of Analytic Semigroups}
				
				Given an arbitrary Banach space $X$, it is difficult to guarantee, roughly spoken, the existence of interesting strongly continuous semigroups on this space. Of course, every bounded operator generates such a semigroup by means of exponentiation. Such an argument does in general not work to show the existence of $C_0$-semigroups with an unbounded generator. Indeed, on $L_{\infty}([0,1])$ a result by H.P.~Lotz~\cite[Theorem 3]{Lot85} shows that every generator of a strongly continuous semigroup is already bounded. 
				
				One therefore has to make additional assumptions on the Banach space. A very convenient and rather general assumption for separable Banach spaces is to require the existence of a Schauder basis for that space. Indeed, all classical separable Banach spaces have a Schauder basis. Moreover, for a long time it has been an open problem whether all separable Banach spaces have a Schauder basis (this was solved negatively by~P. Enflo~\cite{Enf73}). 
				
				The next proposition shows that Schauder bases allow us to construct systematically strongly continuous semigroups (with unbounded generators) on the underlying Banach spaces.
				
				\begin{proposition}\label{prop:sm_generator_semigroup}
					Let $(e_m)_{m \in \IN}$ be a Schauder basis for a Banach space $X$. Further let $(\gamma_m)_{m \in \IN}$ be an increasing sequence of positive real numbers. Then the Schauder multiplier associated to $(\gamma_m)_{m \in \IN}$ with respect to $(e_m)_{m \in \IN}$ is a sectorial operator with $\omega(A) = 0$. In particular, $-A$ generates an analytic $C_0$-semigroup $(T(z))_{z \in \Sigma_{\frac{\pi}{2}}}$.
				\end{proposition}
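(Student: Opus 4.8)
The plan is to identify the resolvent of $A$ explicitly as a Schauder multiplier and then read off sectoriality from the bounded-variation estimate of Proposition~\ref{prop:sm_prop}. Since $A$ acts diagonally with respect to $(e_m)_{m \in \IN}$ by multiplication with $\gamma_m$, one expects that for $\lambda \notin [0,\infty)$ the resolvent $R(\lambda, A)$ is the Schauder multiplier associated to the sequence $\bigl(\tfrac{1}{\lambda - \gamma_m}\bigr)_{m \in \IN}$. First I would verify this algebraically: the identity
	\[ \frac{\gamma_m}{\lambda - \gamma_m} = -1 + \frac{\lambda}{\lambda - \gamma_m} \]
shows that applying this multiplier maps $X$ into $D(A)$ and inverts $\lambda - A$ on both sides, so that $\lambda$ lies in $\rho(A)$ as soon as the multiplier is bounded (which the next step provides for every $\lambda \notin [0,\infty)$). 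In particular $\IC \setminus [0,\infty) \subseteq \rho(A)$ and hence $\sigma(A) \subseteq [0,\infty) \subseteq \overline{\Sigma_{\omega}}$ for every $\omega \in (0,\pi)$.

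The heart of the argument is a uniform bound on $\norm{\lambda R(\lambda,A)}$ away from the positive axis, which by Proposition~\ref{prop:sm_prop} reduces to bounding the $BV$-norm of the symbol $c_m(\lambda) \coloneqq \tfrac{\lambda}{\lambda - \gamma_m}$ uniformly for $\lambda$ with $\abs{\arg \lambda} \ge \theta$, where $\theta \in (0,\pi)$ is fixed. The geometric input I would establish first is the separation estimate $\abs{\lambda - \gamma} \ge c_\theta(\abs{\lambda} + \gamma)$, valid for all $\gamma \ge 0$ and all $\lambda$ with $\abs{\arg \lambda} \ge \theta$, with a constant $c_\theta > 0$; this follows by minimising $\tfrac{\abs{\lambda - \gamma}^2}{(\abs{\lambda} + \gamma)^2}$ over the ray $[0,\infty)$, the infimum being strictly positive because $\lambda$ stays off $[0,\infty)$. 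From this, $\sup_m \abs{c_m(\lambda)} \le c_\theta^{-1}$ is immediate. For the variation I would use
	\[ c_{m+1}(\lambda) - c_m(\lambda) = \lambda \, \frac{\gamma_{m+1} - \gamma_m}{(\lambda - \gamma_{m+1})(\lambda - \gamma_m)} \]
and bound the denominator by the separation estimate. Here the monotonicity hypothesis is decisive: because $\gamma_{m+1} - \gamma_m \ge 0$, the resulting terms telescope via
	\[ \frac{\gamma_{m+1} - \gamma_m}{(\abs{\lambda} + \gamma_{m+1})(\abs{\lambda} + \gamma_m)} = \frac{1}{\abs{\lambda} + \gamma_m} - \frac{1}{\abs{\lambda} + \gamma_{m+1}}, \]
so that $\sum_m \abs{c_{m+1}(\lambda) - c_m(\lambda)} \le c_\theta^{-2} \, \tfrac{\abs{\lambda}}{\abs{\lambda} + \gamma_1} \le c_\theta^{-2}$, uniformly in $\lambda$. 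Thus $\norm{(c_m(\lambda))_{m}}_{BV}$ is bounded independently of $\lambda$, and Proposition~\ref{prop:sm_prop} yields $\sup_{\abs{\arg \lambda} \ge \theta} \norm{\lambda R(\lambda, A)} < \infty$.

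With these two ingredients the defining resolvent estimate for sectoriality holds for every opening angle $\theta > 0$, giving $\omega(A) = 0$. It remains to check the structural requirements in the definition of a sectorial operator: closedness and density of the domain come from the basic properties of Schauder multipliers recorded above, injectivity follows since $\gamma_m a_m = 0$ forces $a_m = 0$ as $\gamma_m > 0$, and the range is dense because $e_m = A(\gamma_m^{-1} e_m)$ shows that every basis vector lies in the range. Finally, since $\omega(A) = 0 < \tfrac{\pi}{2}$, the standard generation theorem for analytic semigroups (representing $T(z)$ by a Cauchy integral of $e^{-z\lambda} R(\lambda, A)$ over a suitable contour, whose absolute convergence is guaranteed precisely by the resolvent bound just proved) shows that $-A$ generates a bounded analytic $C_0$-semigroup on the sector $\Sigma_{\frac{\pi}{2} - \omega(A)} = \Sigma_{\frac{\pi}{2}}$.

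The step I expect to be the main obstacle is the uniform $BV$-bound on the resolvent symbol, and more precisely the control of the variation $\sum_m \abs{c_{m+1}(\lambda) - c_m(\lambda)}$ independently of $\lambda$. Both the off-axis separation estimate and, crucially, the telescoping made possible by the monotonicity of $(\gamma_m)_{m \in \IN}$ enter here; without monotonicity the variation need not even be summable, and one would only recover boundedness of the multiplier rather than the uniform sectorial bound required for $\omega(A) = 0$.
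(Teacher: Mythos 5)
Your proposal is correct and follows essentially the same route as the proof the paper relies on (it refers to \cite[Section~9.1.1]{Haa06} and \cite{Ven93}): identify $R(\lambda,A)$ as the Schauder multiplier with symbol $\bigl(\frac{1}{\lambda-\gamma_m}\bigr)_{m\in\IN}$, and deduce the uniform estimate on $\norm{\lambda R(\lambda,A)}$ off every sector from Proposition~\ref{prop:sm_prop} together with the off-axis separation bound and the telescoping of the variation, which is exactly where the monotonicity of $(\gamma_m)_{m \in \IN}$ enters. Your treatment of the structural requirements (closedness, dense domain, injectivity, dense range) and the passage to the analytic semigroup via $\omega(A)=0<\frac{\pi}{2}$ is also the standard one, so there is nothing to add.
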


			\subsection{Sectorial Operators without a Bounded \texorpdfstring{$H^{\infty}$-}{Holomorphic }calculus}
			
				In this subsection we apply the so far developed methods to give examples of sectorial operators without a bounded $H^{\infty}$-calculus. The first example was given in~\cite{McIYag90}. The elegant approach of this section goes back to~\cite{Lan98} and \cite{Mer99}. 
				
				One can easily show that one cannot obtain examples of sectorial operators without a bounded $H^{\infty}$-calculus by using Schauder multipliers with respect to an unconditional basis. However, one can produce counterexamples from Schauder multipliers with respect to a conditional basis.
				
				\begin{theorem}\label{thm:sectorial_nohinfty}
					Let $(e_m)_{m \in \IN}$ be a conditional Schauder basis for a Banach space $X$. Then the Schauder multiplier $A$ associated to the sequence $(2^m)_{m \in \IN}$ is a sectorial operator with $\omega(A) = 0$ which does not have a bounded $H^{\infty}$-calculus.
				\end{theorem}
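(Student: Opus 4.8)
The sectoriality of $A$ together with $\omega(A) = 0$ is exactly the content of Proposition~\ref{prop:sm_generator_semigroup}, since $(2^m)_{m \in \IN}$ is an increasing sequence of positive reals; the whole point is therefore to show that $A$ admits no bounded $H^{\infty}$-calculus. I would begin by recording how the calculus acts. Each basis vector is an eigenvector, $A e_m = 2^m e_m$, so $R(\lambda, A) e_m = (\lambda - 2^m)^{-1} e_m$, and for $f \in H_0^{\infty}(\Sigma_{\sigma})$ Cauchy's integral formula gives $f(A) e_m = f(2^m) e_m$. Thus $f(A)$ is again a Schauder multiplier, namely the one associated to the symbol $(f(2^m))_{m \in \IN}$, and by the extension of the calculus to $H^{\infty}(\Sigma_{\sigma})$ the same identity persists for bounded holomorphic $f$.

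The strategy is to argue by contradiction: if $A$ had a bounded $H^{\infty}(\Sigma_{\sigma})$-calculus for some $\sigma \in (0,\pi)$, say $\norm{f(A)} \le C \norm{f}_{H^{\infty}(\Sigma_{\sigma})}$, I would use it to force the basis $(e_m)$ to be unconditional. Recall that $(e_m)$ is unconditional precisely when the sign multipliers $M_{\varepsilon} \colon \sum_m a_m e_m \mapsto \sum_m \varepsilon_m a_m e_m$, indexed by $\varepsilon \in \{-1,+1\}^{\IN}$, are uniformly bounded. By the previous paragraph $M_{\varepsilon} = f_{\varepsilon}(A)$ as soon as one can produce $f_{\varepsilon} \in H^{\infty}(\Sigma_{\sigma})$ with $f_{\varepsilon}(2^m) = \varepsilon_m$ for all $m$, and then $\norm{M_{\varepsilon}} \le C \norm{f_{\varepsilon}}_{H^{\infty}(\Sigma_{\sigma})}$. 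A uniform bound $\sup_{\varepsilon} \norm{f_{\varepsilon}}_{H^{\infty}(\Sigma_{\sigma})} < \infty$ would therefore make $(e_m)$ unconditional, contradicting the hypothesis.

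Everything thus hinges on the interpolation statement that the geometric sequence $(2^m)$ is an $H^{\infty}(\Sigma_{\sigma})$-interpolating sequence with uniform control; this is the main obstacle and the only genuinely analytic ingredient. I would handle it by a conformal change of variables: the power map $z \mapsto z^{\pi/(2\sigma)}$ sends $\Sigma_{\sigma}$ onto the right half-plane and carries $2^m$ to $\rho^m$ with $\rho = 2^{\pi/(2\sigma)} > 1$, a geometric sequence on the positive real axis. Such lacunary sequences have consecutive pseudohyperbolic distance $\frac{\rho-1}{\rho+1}$ bounded away from $0$ and satisfy the Carleson condition, hence are interpolating for $H^{\infty}$ of the half-plane; pulling back produces the desired $f_{\varepsilon}$ (indeed with $\varepsilon$ replaced by any sequence in the unit ball of $\ell^{\infty}$) with $H^{\infty}(\Sigma_{\sigma})$-norm controlled by a constant depending only on $\rho$, hence only on $\sigma$.

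Since $\rho > \sqrt{2}$ for every $\sigma \in (0,\pi)$, the interpolation constant stays finite across all admissible sectors, so the argument rules out a bounded $H^{\infty}(\Sigma_{\sigma})$-calculus for \emph{each} $\sigma \in (0,\pi)$ and hence shows that $A$ has no bounded $H^{\infty}$-calculus at all. A minor compatibility point to verify along the way is that the extended $H^{\infty}$-calculus still acts diagonally on the $e_m$, which follows from the dominated-convergence-type lemma used to define the extension.
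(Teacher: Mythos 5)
Your proposal is correct and follows essentially the same route as the paper: identify $f(A)$ as the Schauder multiplier with symbol $(f(2^m))_{m \in \IN}$, use interpolation at the geometrically separated points $2^m$ to realize sign multipliers (the paper realizes all of $\ell_{\infty}$) as operators $f(A)$ with uniformly bounded $H^{\infty}(\Sigma_{\sigma})$-norms, and contradict the conditionality of the basis. The only difference is that where the paper cites~\cite[Corollary~9.1.6]{Haa06} as a black box, you prove the interpolation step yourself via the conformal map $z \mapsto z^{\pi/(2\sigma)}$ and Carleson's theorem for lacunary sequences in the half-plane, which is a correct, self-contained substitute for (and essentially a proof of) the cited result.
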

				\begin{proof}
					By Proposition~\ref{prop:sm_generator_semigroup} everything is already shown except for the fact that $A$ does not have a bounded $H^{\infty}$-calculus. For this observe that for each $f \in H^{\infty}(\Sigma_{\sigma})$ for some $\sigma \in (0,\pi)$ the operator $f(A)$ is given by the Schauder multiplier associated to the sequence $(f(\gamma_m))_{m \in \IN}$. Now assume that $A$ has a bounded $H^{\infty}(\Sigma_{\sigma})$-calculus for some $\sigma \in (0, \pi)$. By~\cite[Corollary~9.1.6]{Haa06} on the interpolation of sequences by holomorphic functions, for every element in $\ell_{\infty}$ there exists an $f \in H^{\infty}(\Sigma_{\sigma})$ such that $(f(2^m))_{m \in \IN}$ is the desired sequence. This means that every element in $\ell_{\infty}$ defines a bounded Schauder multiplier. However, this means that $(e_m)_{m \in \IN}$ is unconditional in contradiction to our assumption.
				\end{proof}
				
				\begin{corollary}
					Let $X$ be a Banach space that admits a Schauder basis. Then there exists a sectorial operator $A$ with $\omega(A) = 0$ that does not have a bounded $H^{\infty}$-calculus.
				\end{corollary}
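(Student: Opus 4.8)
The plan is to deduce the statement at once from Theorem~\ref{thm:sectorial_nohinfty}. That theorem already produces a sectorial operator with vanishing sectorial angle and without a bounded $H^{\infty}$-calculus, \emph{provided} the underlying space is equipped with a conditional Schauder basis. The only gap between its hypothesis and the present one is that we are merely handed \emph{some} Schauder basis, which could happen to be unconditional; in that case the argument of the theorem breaks down, since a multiplier associated to a bounded sequence would then be automatically bounded. Hence the entire content of the corollary lies in the passage from an arbitrary basis to a conditional one.

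Before that, I would dispose of the trivial case: if $X$ is finite-dimensional, every operator is bounded with finite spectrum, so every sectorial operator has a bounded $H^{\infty}$-calculus and no counterexample can exist. Accordingly we assume $X$ to be infinite-dimensional, which is in any case the only interesting situation for Schauder bases. The key input I would then invoke is a classical theorem of Pe\l czy\'nski and Singer (see~\cite{Sin70}): every infinite-dimensional Banach space admitting a Schauder basis in fact admits a conditional Schauder basis---indeed uncountably many mutually non-equivalent normalized ones. Applying this to $X$ furnishes a conditional Schauder basis $(\tilde{e}_m)_{m \in \IN}$.

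It then remains only to feed this basis into Theorem~\ref{thm:sectorial_nohinfty}: the Schauder multiplier $A$ associated to the sequence $(2^m)_{m \in \IN}$ with respect to $(\tilde{e}_m)_{m \in \IN}$ is sectorial with $\omega(A) = 0$ and has no bounded $H^{\infty}$-calculus, which is precisely the assertion. The only genuine obstacle in the whole argument is thus the existence of a conditional basis; once that deep fact is granted, everything else is immediate.
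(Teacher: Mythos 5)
Your proposal is correct and follows essentially the same route as the paper: the paper's proof likewise consists of invoking the Pe\l czy\'nski--Singer theorem (cited there as \cite[Theorem 9.5.6]{AlbKal06}) to obtain a conditional basis and then applying Theorem~\ref{thm:sectorial_nohinfty}. Your explicit treatment of the finite-dimensional case is a harmless extra remark that the paper leaves implicit.
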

				\begin{proof}
					Every Banach space which admits a Schauder basis does also admit a conditional Schauder basis~\cite[Theorem 9.5.6]{AlbKal06}. Then the result follows directly from Theorem~\ref{thm:sectorial_nohinfty}.
				\end{proof}
								
				Next we give a concrete example of a sectorial operator of the above form which has boundary imaginary powers  but no bounded $H^{\infty}$-calculus. This goes back to G.~Lancien \cite{Lan98} (see also \cite{Mer99}).
				
				\begin{example}\label{exp:bip_nohinfty}
					We consider the trigonometric system $(e^{imz})_{m \in \IZ}$ enumerated as $(0,-1,1,-2, \ldots)$ which is a conditional basis of $L_p([0,2\pi])$ for $p \in (1, \infty) \setminus \{2\}$~\cite[Theorem~2.c.16]{LinTza79}. We can then consider the Schauder multiplier $A$ associated to the sequence $(2^m)_{m \in \IZ}$. As a consequence of the boundedness of the Hilbert transform on $L_p$ one can  consider the operator separately on the two complemented parts with respect to the decomposition 
						\[ L_p([0,2\pi]) = \overline{\linspan} \{ e^{imz}: m < 0 \} \oplus \overline{\linspan} \{ e^{imz}: m \ge 0 \}. \]
					Observe that $A$ has a bounded $H^{\infty}$-calculus if and only if both parts have a bounded $H^{\infty}$-calculus. It then follows from Proposition~\ref{prop:sm_prop} and Proposition~\ref{prop:sm_generator_semigroup} that $A$ is a sectorial operator with $\omega(A) = 0$ which by Theorem~\ref{thm:sectorial_nohinfty} (applied to the second part) does not have a bounded $H^{\infty}$-calculus. We now show that $A$ has bounded imaginary powers with $\omega_{\text{BIP}}(A) = 0$. For this we observe that
						\begin{align*}
							A^{it} \biggl(\sum_{m \in \IZ} a_m e^{imz} \biggr) & = \sum_{m \in \IZ} 2^{mit} a_m e^{imz} = \sum_{m \in \IZ} a_m \exp(imt \log 2)  e^{imz} \\
							& = \sum_{m \in \IZ} a_m \exp(im(t \log 2 + z)) = S(t \log 2) \biggl( \sum_{m \in \IZ} a_m e^{imz} \biggr),
						\end{align*}
					where $(S(t))_{t \in \IR}$ is the periodic shift group on $L_p([0,2\pi])$.			
				\end{example}
				
				We will study examples of the above type more systematically in Section~\ref{sec:monniaux}.
				
			\subsection{Sectorial Operators without BIP}
			
				Similarly to the case of the bounded $H^{\infty}$-calculus one can use Schauder multipliers to construct sectorial operators which do not have bounded imaginary powers. We start with a weighted version of Example~\ref{exp:bip_nohinfty} which gives an example of an $\mathcal{R}$-sectorial operator without bounded imaginary powers, a discrete version of the counterexample~\cite[Example~10.17]{KunWei04}. However, before we need to state some facts on harmonic analysis and $A_p$-weights. 
			
				It is a natural question to ask for which weights $w$ the trigonometric system is a Schauder basis for the space $L_p([0,2\pi],w)$. Indeed, a complete characterization of these weights is known. We identify the torus $\mathbb{T}$ with the interval $[0,2\pi)$ on the real line and functions in $L_p([0,2\pi])$ with their periodic extensions or with $L_p$-functions on the torus. 
				
				\begin{definition}[$A_p$-Weight]\index{Muckenhoupt weight}\index{$A_p$-weight}
					Let $p \in (1, \infty)$. A function $w\colon \IR \to [0, \infty]$ with $w(t) \in (0,\infty)$ almost everywhere is called an \emph{$A_p$-weight} if there exists a constant $K \ge 0$ such that for every compact interval $I \subset \IR$ with positive length one has
						\[ \biggl( \frac{1}{\abs{I}} \int_{I} w(t) \, dt \biggr) \biggl( \frac{1}{\abs{I}} \int_I w(t)^{-1/(p-1)} \, dt \biggr)^{p-1} \le K. \]
					The set of all $A_p$-weights is denoted by $\mathcal{A}_p(\mathbb{R})$\index{$\mathcal{A}_p$|see{$A_p$-weight}}. Moreover, we set in the periodic case
						\[ \mathcal{A}_p(\mathbb{T}) \coloneqq \{ w \in \mathcal{A}_p(\IR): w \text{ is } 2\pi \text{-periodic} \}. \]
				\end{definition}
				
				For a detailed treatment of these weights and their applications in harmonic analysis we refer to the monograph~\cite[Chapter~V]{Ste93}. As an example the $2\pi$-periodic extension of the function $t \mapsto \abs{t}^{\alpha}$ for $\alpha \in \IR$ lies in $\mathcal{A}_p(\IT)$ if and only if $\alpha \in (-1,p-1)$~\cite[Example~2.4]{BerGil03}. The characterization below can be found in~\cite[Proposition~2.3]{Nie09} and essentially goes back to methods developed by R.~Hunt, B.~Muckenhoupt and R.~Wheeden in~\cite{HMW73}.
				
				\begin{theorem}\label{thm:trigonmetric_ap_basis}
					Let $w\colon \IR \to [0,\infty]$ with $w(t) \in (0,\infty)$ almost everywhere be a $2\pi$-periodic weight and $p \in (1, \infty)$. Then the trigonometric system is a Schauder basis for $L_p([0,2\pi], w)$ with respect to the enumeration $(0,-1,1,-2,2, \ldots)$ of $\IZ$ if and only if $w \in \mathcal{A}_p(\mathbb{T})$.
				\end{theorem}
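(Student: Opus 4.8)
The plan is to reduce the Schauder basis property to a uniform bound on the partial sum operators, to identify these (via the modulation invariance of $L_p([0,2\pi],w)$) with the Riesz projection, and finally to invoke the Hunt--Muckenhoupt--Wheeden characterization of the weights for which the conjugate function is bounded. The only deep analytic input is the last one; everything before it is soft functional analysis.

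First I would recall the standard criterion from basis theory: a system with dense linear span is a Schauder basis if and only if its partial sum projections are uniformly bounded on the span. With the enumeration $(0,-1,1,-2,2,\dots)$ the partial sum after $2N+1$ terms is exactly the symmetric Fourier partial sum
\[ S_N f = \sum_{\abs{m} \le N} \widehat{f}(m)\, e^{imz}, \]
while an intermediate partial sum differs from some $S_N$ by a single-frequency projection $f \mapsto \widehat{f}(-N)e^{-iNz}$. Writing $M_k$ for the modulation $f \mapsto e^{ikz}f$ and $E_0 f = \widehat{f}(0)$ for the mean, this extra term equals $M_{-N}E_0 M_N$; since $\abs{e^{ikz}}\equiv 1$ each $M_k$ is an isometry of $L_p([0,2\pi],w)$, and $E_0$ is bounded precisely when $w$ and $w^{-1/(p-1)}$ are integrable over $[0,2\pi]$, which is forced by $w \in \mathcal{A}_p(\IT)$ (and, in the forward direction, is automatic since $E_0$ is a coordinate functional of the basis). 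Hence uniform boundedness of all partial sums is equivalent to $\sup_N \norm{S_N} < \infty$.

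The core step is to tie $S_N$ to the Riesz projection $P f = \sum_{m \ge 0}\widehat{f}(m)e^{imz}$. A direct computation with Fourier coefficients gives
\[ S_N = M_{-N}\, P\, M_N - M_{N+1}\, P\, M_{-(N+1)}, \]
because $M_{-N}PM_N$ is the projection onto frequencies $\ge -N$ and $M_{N+1}PM_{-(N+1)}$ the projection onto frequencies $\ge N+1$; using that the $M_k$ are isometries this yields $\norm{S_N}\le 2\norm{P}$. Conversely $M_N S_N M_{-N}$ is the projection onto the frequency band $[0,2N]$, which converges strongly to $P$ on the dense set of trigonometric polynomials, so by uniform boundedness $\norm{P}\le \sup_N\norm{S_N}$. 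Thus $\sup_N\norm{S_N}<\infty$ if and only if $P$ is bounded on $L_p([0,2\pi],w)$; and since, with the convention $\widehat{Hf}(m) = -i\,\sgn(m)\widehat{f}(m)$, one has $P = \tfrac12(I + iH) + \tfrac12 E_0$ for the conjugate function $H$, boundedness of $P$ is equivalent to boundedness of $H$.

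At this point the Hunt--Muckenhoupt--Wheeden theorem~\cite{HMW73} supplies exactly what is needed: the conjugate function is bounded on $L_p(\IT,w)$ if and only if $w \in \mathcal{A}_p(\IT)$. Combining both directions yields the claim, once density of the trigonometric polynomials in $L_p([0,2\pi],w)$ is checked; this is routine for $w \in \mathcal{A}_p(\IT)$, as local integrability of $w$ makes continuous functions dense and Fej\'er's theorem approximates them uniformly by trigonometric polynomials, while $\int_0^{2\pi} w < \infty$ controls $\norm{\cdot}_{L_p(w)}$ by the sup-norm. The genuinely hard ingredient is the Hunt--Muckenhoupt--Wheeden theorem, which I would cite; within the remaining reduction the main points to get right are the bookkeeping of the enumeration and the strong-convergence argument identifying $\lim_N M_N S_N M_{-N}$ with $P$.
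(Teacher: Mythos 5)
Your proposal is correct and takes essentially the same route as the paper, which offers no proof of its own but delegates exactly this argument to \cite[Proposition~2.3]{Nie09}, noting it ``essentially goes back to'' the methods of \cite{HMW73}: the standard reduction of the basis property to uniform partial-sum bounds, the modulation identity $S_N = M_{-N}PM_N - M_{N+1}PM_{-(N+1)}$ tying these to the Riesz projection, and the Hunt--Muckenhoupt--Wheeden characterization of the weights for which the conjugate function is bounded. Your bookkeeping (the single-frequency term $M_{-N}E_0M_N$, the relation $P = \tfrac12(I+iH)+\tfrac12 E_0$, and the density of trigonometric polynomials via $w \in L_1$ and Fej\'er) all checks out.
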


				Now we are ready to give the example.
				
				\begin{example}\label{ex:r_sectorial_without_bip}
					Let $p \in (1, \infty)$ and $w \in \mathcal{A}_p(\IT)$ be an $A_p$-weight. Then the trigonometric system $(e^{imz})_{m \in \IZ}$ is a Schauder basis for $L_p([0,2\pi],w)$ by Theorem~\ref{thm:trigonmetric_ap_basis}. Let $A$ again be the Schauder multiplier associated to the sequence $(2^m)_{m \in \IZ}$. One sees as in Example~\ref{exp:bip_nohinfty} that $A$ is a sectorial operator. It remains to show that $A$ is $\mathcal{R}$-sectorial. Notice that for $\lambda = a2^le^{i\theta} \in \IC \setminus [0, \infty)$ with $\abs{a} \in [1,2]$ one has for $x = \sum_{m \in \IZ} a_m e^{imz}$
						\begin{align*} 
							\lambda R(\lambda,A) x & = \sum_{m \in \IZ} \frac{\lambda}{\lambda - 2^m} a_m e^{imz} = \sum_{m \in \IZ} \frac{ae^{i\theta}}{ae^{i\theta} - 2^{m-l}} a_m e^{imz} \\
							& = \sum_{m \in \IZ} \frac{ae^{i\theta}}{ae^{i\theta} - 2^m} a_{m+l} e^{i(m+l)z}  = ae^{i \theta} R(ae^{i\theta},A) \biggl( \sum_{m \in \IZ} a_{m+l} e^{imz} \biggr) e^{ilz} \\
							& = e^{ilz} ae^{i\theta} R(a e^{i\theta},A) (x \cdot e^{-ilz})
						\end{align*}
					Consequently for $\lambda_k = a2^{l_k} e^{i \theta}$ with $k \in \{1,\ldots,n\}$ and $x_1, \ldots, x_n \in L_p([0,2\pi],w)$ one has
						\begin{align*}
							\MoveEqLeft \biggnorm{\sum_{k=1}^n r_k \lambda_k R(\lambda_k,A)x_k} = \biggnorm{\sum_{k=1}^n r_k e^{il_k z} ae^{i\theta} R(ae^{i\theta},A)(e^{-il_k z} x_k)} \\
							& \le 2 \abs{a} \norm{R(ae^{i\theta},A)} \biggnorm{\sum_{k=1}^n r_k e^{-il_k z} x_k} \le 8 \norm{R(ae^{i\theta},A)} \biggnorm{\sum_{k=1}^n r_k x_k}
						\end{align*}
					by Kahane's contraction principle. Now it is easy to check that for every $\theta_0 > 0$ the sequences $(\frac{ae^{i\theta}}{ae^{i\theta} - 2^{\pm m}})_{m \in \IN}$ satisfy the assumptions of Proposition~\ref{prop:sm_prop} uniformly in $\theta \in [\theta_0, 2\pi)$ for $\theta_0 > 0$ and in $\abs{a} \in [1,2]$. By \cite[Theorem~4.2 2)]{Wei01} and the boundedness of the Hilbert transform on $L_p([0,2\pi], w)$ this shows that $A$ is $\mathcal{R}$-analytic with $\omega_R(A) = 0$.
					
					By the same calculation as in Example~\ref{exp:bip_nohinfty} the operator $A^{it}$ for $t \in \IR$ is given by $S(t \log 2)$ on the dense set of trigonometric polynomials, where $(S(t))_{t \in \IR}$ is the periodic shift group. Notice however that for example for $w(t) = \abs{t}^{\alpha}$ for a suitable chosen $\alpha \in \IR$ such that $w \in \mathcal{A}_p(\IT)$ this group obviously does not leave $L_p([0,2\pi],w)$ invariant. Hence, $A$ does not have bounded imaginary powers.
				\end{example}
			
			\subsection{Sectorial Operators which are not \texorpdfstring{$\mathcal{R}$}{R}-Sectorial}
			
				We now present a self-contained example of a sectorial operator on $L_p$ which is not $\mathcal{R}$-sectorial based on~\cite{Fac13}. In order to do that we need to study some geometric properties of $L_p$-spaces.
				
				A key role in what follows is played by $L_p$-functions which stay away from zero in a sufficiently large set. More precisely, for $p \in [1,\infty)$ and $\epsilon > 0$ we consider 
				\[ M^p_{\epsilon} \coloneqq \left\{ f \in L_p([0,1]): \lambda \left( \left\{ x \in [0,1] : \abs{f(x)} \ge \epsilon \norm{f}_p \right\} \right) \ge \epsilon \right\}. \]
				
				Functions in these sets have a very important summability property which is comparable to the $L_2$-case. For the proofs of the next two lemmata we follow closely the main ideas in~\cite[\textsection 21]{Sin70}. 
				
				\begin{lemma}\label{lem:unconditional}
					For $p \in [2,\infty)$ and $\epsilon > 0$ let $(f_m)_{m \in \IN} \subset L_p([0,1])$ be a sequence in $M_{\epsilon}^p$ such that $\sum_{m=1}^{\infty} f_m$ converges unconditionally in $L_p([0,1])$. Then one has $\sum_{m=1}^{\infty} \norm{f_m}_p^2 < \infty$.
				\end{lemma}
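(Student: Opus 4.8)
The plan is to use the unconditional convergence of $\sum_m f_m$ together with Khintchine's inequality to control the square function $(\sum_m \abs{f_m}^2)^{1/2}$ in $L_p$, and then to exploit the defining property of $M_{\epsilon}^p$ to turn that control into summability of the norms $\norm{f_m}_p^2$. The hypothesis $p \ge 2$ will enter at exactly one, decisive, place.

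First I would record the standard consequence of unconditional convergence: since $\sum_m \theta_m f_m$ converges in $L_p$ for every sign sequence $(\theta_m) \in \{-1,1\}^{\IN}$, the uniform smallness of tails yields a constant $M < \infty$ with $\norm{\sum_{m=1}^N \theta_m f_m}_p \le M$ for all $N$ and all signs. Substituting the Rademacher functions $r_m$ (which take values in $\{-1,1\}$ almost everywhere) for the signs and integrating gives
\[ \int_0^1 \biggnorm{\sum_{m=1}^N r_m(t) f_m}_p^p \, dt \le M^p. \]
By Fubini's theorem and the lower Khintchine inequality applied pointwise in the variable $x$ (with lower constant $A_p > 0$), the left-hand side dominates $A_p^p \int_0^1 (\sum_{m=1}^N \abs{f_m(x)}^2)^{p/2} \, dx$. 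Letting $N \to \infty$ and setting $C \coloneqq M/A_p$ produces the crucial square-function bound
\[ \int_0^1 \biggl( \sum_{m=1}^{\infty} \abs{f_m(x)}^2 \biggr)^{p/2} dx \le C^p. \]

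Second, I would feed in the membership $f_m \in M_{\epsilon}^p$. Writing $E_m \coloneqq \{ x : \abs{f_m(x)} \ge \epsilon \norm{f_m}_p \}$, we have $\lambda(E_m) \ge \epsilon$ and $\abs{f_m(x)}^2 \ge \epsilon^2 \norm{f_m}_p^2$ on $E_m$, so pointwise $\sum_m \abs{f_m(x)}^2 \ge \epsilon^2 \sum_m \norm{f_m}_p^2 \Ind_{E_m}(x)$; inserting this into the square-function bound yields
\[ C^p \ge \epsilon^p \int_0^1 \biggl( \sum_m \norm{f_m}_p^2 \Ind_{E_m}(x) \biggr)^{p/2} dx. \]
For the final inequality I would apply Jensen's inequality on the probability space $([0,1],\lambda)$ to $g \coloneqq \sum_m \norm{f_m}_p^2 \Ind_{E_m} \ge 0$: since $p \ge 2$ the map $t \mapsto t^{p/2}$ is convex, so $\int_0^1 g^{p/2} \ge (\int_0^1 g)^{p/2}$, while $\int_0^1 g = \sum_m \norm{f_m}_p^2 \lambda(E_m) \ge \epsilon \sum_m \norm{f_m}_p^2$. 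Combining the three displays gives $C^p \ge \epsilon^{3p/2} (\sum_m \norm{f_m}_p^2)^{p/2}$, whence $\sum_m \norm{f_m}_p^2 \le C^2 \epsilon^{-3} < \infty$, as claimed.

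I expect no deep obstacle here; the two inequalities doing the real work — the lower Khintchine bound and Jensen — are routine, as is the uniform bound on signed partial sums. The point worth emphasising instead is conceptual: one must pass to the square function, where unconditional convergence gives an \emph{upper} estimate via Khintchine, and then recover a matching \emph{lower} estimate from the purely geometric $M_{\epsilon}^p$ property via Jensen. The assumption $p \ge 2$ is used precisely to make $t \mapsto t^{p/2}$ convex in the last step, which is exactly why the conclusion must fail for $p < 2$.
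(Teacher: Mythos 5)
Your proof is correct, but it takes a genuinely different route from the paper's. The paper never forms the $L_p$ square function: since $p \ge 2$ and $[0,1]$ has total mass one, it first notes $\norm{f}_2 \le \norm{f}_p$, so the series converges unconditionally in $L_2([0,1])$ as well; there the Rademacher average of the signed partial sums is computed \emph{exactly} by orthogonality, $\sum_{m=1}^N \norm{f_m}_2^2 = \int_0^1 \normalnorm{\sum_{m=1}^N r_m(t) f_m}_2^2 \, dt \le K^2$, and the hypothesis $f_m \in M_{\epsilon}^p$ is then used once, term by term, via $\norm{f_m}_2^2 \ge \int_{\abs{f_m} \ge \epsilon \norm{f_m}_p} \abs{f_m}^2 \ge \epsilon^3 \norm{f_m}_p^2$. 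You instead stay in $L_p$, replacing exact $L_2$-orthogonality by the lower Khintchine inequality (yielding the square-function bound $\int_0^1 (\sum_m \abs{f_m(x)}^2)^{p/2}\,dx \le C^p$) and replacing the termwise comparison by a pointwise estimate on the sets $E_m$ followed by Jensen. Both arguments rest on the same two pillars --- uniform boundedness of all signed partial sums, and the quantitative lower bound encoded in $M_{\epsilon}^p$ --- and amusingly produce the same constant $\epsilon^{-3}$. The paper's version is more elementary (no Khintchine, no Jensen, and $p \ge 2$ enters only through the embedding $L_p \hookrightarrow L_2$), while yours isolates the square function $(\sum_m \abs{f_m}^2)^{1/2}$ as the central object, which is the more robust viewpoint if one had in mind lattice-valued or vector-valued variants.

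One incidental claim in your closing remark is false: the conclusion does \emph{not} fail for $p \in [1,2)$. In that range $L_p([0,1])$ has cotype $2$, so unconditional convergence alone already forces $\sum_m \norm{f_m}_p^2 < \infty$, and the hypothesis $f_m \in M_{\epsilon}^p$ becomes superfluous. What breaks for $p < 2$ is only your last step --- Jensen reverses because $t \mapsto t^{p/2}$ is then concave --- not the statement itself. The restriction $p \ge 2$ in the lemma marks where the assertion has content beyond cotype (for $p \ge 2$ one only gets $\sum_m \norm{f_m}_p^p < \infty$ for free, and $M_{\epsilon}^p$ is exactly what upgrades this to square summability).
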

				\begin{proof}
					Since $p \in [2,\infty)$, it follows from Hölder's inequality that for all $f \in L_p([0,1])$ one has $\norm{f}_2 \le \norm{f}_p$. This shows that the series $\sum_{m=1}^{\infty} f_m$ converges unconditionally in $L_2([0,1])$ as well. By the unconditionality of the series there exists a $K \ge 0$ such that $\norm{\sum_{m=1}^{\infty} \epsilon_m f_m}_2 \le K$ for all $(\epsilon_m)_{m \in \IN} \in \{-1,1\}^{\IN}$. Now, for all $N \in \IN$ one has
						\[ \sum_{m=1}^N \norm{f_m}_2^2 = \int_0^1 \biggnorm{\sum_{m=1}^N r_m(t) f_m}_2^2 \, dt \le K^2. \]
					Hence, $\sum_{m=1}^{\infty} \norm{f_m}_2^2 < \infty$. Notice that the assumption $f_m \in M_{\epsilon}^p$ implies that for all $m \in \IN$
						\[ \norm{f_m}_2^2 \ge \int_{\abs{f_m} \ge \epsilon \norm{f_m}_p} \abs{f_m(x)}^2 \, dx \ge \epsilon^3 \norm{f_m}_p^2. \]
					Together with the summability shown above this yields $\sum_{m=1}^{\infty} \norm{f_m}_p^2 < \infty$.				
				\end{proof}
				
				The next lemma shows that unconditional basic sequences formed out of elements in $M_{\epsilon}^p$ behave like Hilbert space bases.
				
				\begin{lemma}\label{lem:unconditional_series}
					For $p \in [2,\infty)$ let $(e_m)_{m \in \IN}$ be an unconditional normalized basic sequence in $L_p([0,1])$ for which there exists an $\epsilon > 0$ such that $e_m \in M_{\epsilon}^p$ for all $m \in \IN$. Then
						\[ \sum_{m=1}^{\infty} a_m e_m \text{ converges} \qquad \Leftrightarrow \qquad (a_m)_{m \in \IN} \in \ell_2. \]
				\end{lemma}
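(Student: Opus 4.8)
The plan is to establish the two implications separately. The forward implication, that convergence of $\sum_{m=1}^\infty a_m e_m$ forces $(a_m)_{m \in \IN} \in \ell_2$, drops out of Lemma~\ref{lem:unconditional} almost for free once one notices that the sets $M_\epsilon^p$ are invariant under scaling. The reverse implication is the more substantial one and will rest on the fact that $L_p([0,1])$ has type $2$ for $p \in [2,\infty)$, combined with the unconditionality of $(e_m)_{m \in \IN}$.

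First I would dispose of the forward direction. We may assume $\epsilon \le 1$, since otherwise $M_\epsilon^p = \emptyset$ and there is nothing to prove. Suppose $\sum_{m=1}^\infty a_m e_m$ converges. As $(e_m)_{m \in \IN}$ is an unconditional basic sequence, the series converges unconditionally. Setting $f_m \coloneqq a_m e_m$, the key observation is that $\{x : \abs{f_m(x)} \ge \epsilon \norm{f_m}_p\} = \{x : \abs{e_m(x)} \ge \epsilon \norm{e_m}_p\}$ whenever $a_m \neq 0$, so that $f_m \in M_\epsilon^p$ for every $m$ (the zero terms lie in $M_\epsilon^p$ because $\epsilon \le 1$). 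Lemma~\ref{lem:unconditional} then yields $\sum_{m=1}^\infty \norm{f_m}_p^2 < \infty$, and since $\norm{f_m}_p = \abs{a_m} \norm{e_m}_p = \abs{a_m}$ by normalization, this is precisely the statement $(a_m)_{m \in \IN} \in \ell_2$.

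For the reverse direction I would show that the partial sums form a Cauchy sequence. Fix $M \le N$ and abbreviate $g_m \coloneqq a_m e_m$. Unconditionality gives a constant $C$ with $\norm{\sum_{m=M}^N g_m}_p^p \le C^p \int_0^1 \norm{\sum_{m=M}^N r_m(t) g_m}_p^p \, dt$. Applying Fubini and Khintchine's inequality in the variable $t$ fibrewise in $x$, followed by the triangle inequality in $L_{p/2}([0,1])$ (legitimate since $p/2 \ge 1$), I would bound the right-hand side by a multiple of $\bigl(\sum_{m=M}^N \norm{g_m}_p^2\bigr)^{p/2} = \bigl(\sum_{m=M}^N \abs{a_m}^2\bigr)^{p/2}$; this chain of estimates is exactly the type $2$ inequality for $L_p$. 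Since $(a_m)_{m \in \IN} \in \ell_2$, the tail $\sum_{m=M}^N \abs{a_m}^2$ tends to $0$, so the partial sums are Cauchy and the series converges.

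The only place requiring genuine care is the reverse direction, where one must correctly interleave the three estimates: passing from the fixed-sign sum to the Rademacher average via unconditionality, then applying Khintchine fibrewise in $x$, and finally using the triangle inequality in $L_{p/2}$ to pull the $\ell_2$-norm of the coefficients out past the $L_p$-norm. The restriction $p \ge 2$ enters decisively at this last step, since it is what makes $L_{p/2}$ a normed space and thus renders the triangle inequality available; this is precisely why the statement is confined to $p \in [2,\infty)$. The forward direction, by contrast, is essentially a corollary of Lemma~\ref{lem:unconditional}, its whole content being the scale-invariance of $M_\epsilon^p$.
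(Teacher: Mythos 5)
Your proof is correct and follows essentially the same route as the paper: the forward implication is read off from Lemma~\ref{lem:unconditional} applied to $f_m = a_m e_m$ (using the scale-invariance of $M_{\epsilon}^p$, which the paper leaves implicit), and the reverse implication is the same Cauchy-sequence argument combining unconditionality with the type~$2$ inequality for $L_p$, $p \in [2,\infty)$. The only cosmetic difference is that you derive the type~$2$ estimate inline via Khintchine and the triangle inequality in $L_{p/2}$, whereas the paper simply cites that $L_p([0,1])$ has type~$2$.
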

				\begin{proof}
					Assume that the expansion $\sum_{m=1}^{\infty} a_m e_m$ converges. Since $(e_m)_{m \in \IN}$ is an unconditional basic sequence, the series $\sum_{m=1}^{\infty} a_m e_m$ converges unconditionally in $L_p([0,1])$. By Lemma~\ref{lem:unconditional}, one has
					\[ \sum_{m=1}^{\infty} \abs{a_m}^2 = \sum_{m=1}^{\infty} \norm{a_m e_m}_p^2 < \infty. \]
					Conversely, we have to show that the expansion converges for all $(a_m)_{m \in \IN} \in \ell_2$. One has $\norm{\sum_{m=1}^{N} a_m e_m} \le K \norm{\sum_{m=1}^{N} \epsilon_m a_m e_m}$ for all $(\epsilon_m)_{m \in \IN} \in \{-1,1\}^{N}$ and all $N \in \IN$, where $K \ge 0$ denotes the unconditional basis constant of $(e_m)_{m \in \IN}$. Now, since for $p \ge 2$ the space $L_p([0,1])$ has type 2, we have for all $N, M \in \IN$					
						\[ \biggnorm{\sum_{m=M}^N a_m e_m}_p \le K \int_0^1 \biggnorm{\sum_{m=M}^N r_m(t) a_m e_m}_p \, dt \le K C \biggl( \sum_{m=M}^N \abs{a_m}^2 \biggr)^{1/2} \]
					for some constant $C > 0$. From this it is immediate that the sequence of partial sums $(\sum_{m=1}^N a_m e_m)_{N \in \IN}$ is Cauchy in $L_p([0,1])$.		
				\end{proof}
								
				For the following counterexample on $L_p$-spaces our starting point is a particular basis given by the Haar system.
				
				\begin{definition}
					The \emph{Haar system}\index{Haar!system} is the sequence $(h_n)_{n \in \IN}$ of functions defined by $h_1 = 1$ and for $n = 2^k + s$ (where $k = 0,1,2, \ldots$ and $s = 1,2, \ldots, 2^k$) by
						\begin{equation*}
							h_n(t) = \Ind_{[\frac{2s - 2}{2^{k+1}}, \frac{2s - 1}{2^{k+1}})}(t) - \Ind_{[\frac{2s - 1}{2^{k+1}}, \frac{2s}{2^{k+1}})}(t) = \begin{cases}
									1 & \text{if } t \in [\frac{2s - 2}{2^{k+1}}, \frac{2s - 1}{2^{k+1}}) \\
									-1 & \text{if } t \in [\frac{2s - 1}{2^{k+1}}, \frac{2s}{2^{k+1}}) \\
									0 & \text{otherwise}
								\end{cases}.
						\end{equation*}
				\end{definition}
				
				The Haar basis is an unconditional Schauder basis for $L_p([0,1])$ for $p \in (1, \infty)$ (see \cite[Proposition~6.1.3 \& Theorem~6.1.6]{AlbKal06}).
				
				\begin{remark}
					Note that the Haar system is not normalized in $L_p([0,1])$ for $p \in [1, \infty)$. Of course, we can always work with $(h_m / \norm{h_m}_{p})_{m \in \IN}$ instead which is a normalized basis. It is however important to note that the normalization constant $\norm{h_m}_p = 2^{-k/p}$ depends on $p$ and we can therefore not simultaneously normalize $(h_m)_{m \in \IN}$ on the $L_p$-scale. This crucial point was overlooked in~\cite{Fac13}.
				\end{remark}
				
				The following proposition is used to transfer the $\mathcal{R}$-boundedness of a sectorial operator to the boundedness of a single operator. This approach is closely motivated by the work~\cite{AreBu03}.
				
				\begin{proposition}\label{prop:resolvent_r_associated_operator}
				Let $A$ be an $\mathcal{R}$-sectorial operator. Then there exists a constant $C \ge 0$ such that for all $(q_n)_{n \in \IN} \subset \IR_{-}$ the associated operator
					\[ \mathcal{R}\colon \sum_{n=1}^{N} r_n x_n \mapsto \sum_{n=1}^{N} r_n q_n R(q_n,A)x_n \]
				defined on the finite Rademacher sums extends to a bounded operator on $\Rad(X)$ with operator norm at most $C$.
				\end{proposition}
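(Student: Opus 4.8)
The plan is to recognize the operator $\mathcal{R}$ as nothing more than a reformulation of the defining inequality \eqref{eq:R-ineq} for the $\mathcal{R}$-bounded resolvent family. First I would use the $\mathcal{R}$-sectoriality of $A$ to fix an angle $\omega \in (\omega(A), \pi)$ for which $C \coloneqq \mathcal{R}\{\lambda R(\lambda, A): \lambda \notin \overline{\Sigma_\omega}\} < \infty$. The geometric point is that since $\omega < \pi$, the negative real axis is disjoint from $\overline{\Sigma_\omega}$; hence for every $q \in \IR_-$ the operator $q R(q, A)$ belongs to the family $\{\lambda R(\lambda, A): \lambda \notin \overline{\Sigma_\omega}\}$. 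In particular, for any sequence $(q_n)_{n \in \IN} \subset \IR_-$ the operators $T_n \coloneqq q_n R(q_n, A)$ all lie in this single $\mathcal{R}$-bounded set.

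With this in hand the estimate is immediate: applying \eqref{eq:R-ineq} to the finite subfamily $\{T_1, \ldots, T_N\}$ and arbitrary $x_1, \ldots, x_N \in X$ yields
\[ \biggnorm{\sum_{n=1}^N r_n q_n R(q_n, A) x_n}_{L_p([0,1];X)} \le C \biggnorm{\sum_{n=1}^N r_n x_n}_{L_p([0,1];X)}, \]
with the constant $C$ independent of $N$ and of the particular choice of $(q_n)$. This is precisely the assertion that $\mathcal{R}$ has operator norm at most $C$ on finite Rademacher sums.

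Before concluding I would check that $\mathcal{R}$ is well-defined on the finite Rademacher sums: if $\sum_{n=1}^N r_n x_n = 0$ in $L_1([0,1];X)$, then testing against $r_n$ and using the orthogonality of the Rademacher functions forces $x_n = 0$ for each $n$, so the coefficients are uniquely determined by the function and the assignment is unambiguous. Finally, the uniform bound above lets $\mathcal{R}$ extend by continuity to the closure $\Rad(X)$ of the finite Rademacher sums. Here one should note that although $\Rad(X)$ is defined as a closure in $L_1([0,1];X)$, the Kahane--Khintchine inequality makes all the $L_p$-norms ($p \in [1,\infty)$) equivalent on the span of the Rademacher sums, so the space $\Rad(X)$ and the resulting operator norm are independent of which $p$ is used in \eqref{eq:R-ineq}.

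Since the whole argument is a direct translation of the definition of $\mathcal{R}$-boundedness, I do not expect a substantial obstacle; the only points demanding a modicum of care are the geometric observation that $\IR_- \subset \IC \setminus \overline{\Sigma_\omega}$ and the verification that the choice of exponent $p$ in the ambient space $L_p([0,1];X)$ is immaterial.
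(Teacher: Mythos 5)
Your proof is correct and takes essentially the same route as the paper: the paper likewise observes that $\mathcal{R}$-sectoriality gives $C \coloneqq \mathcal{R}\{\lambda R(\lambda,A): \lambda \in \IR_{-}\} < \infty$ (since $\IR_{-}$ lies outside $\overline{\Sigma_{\omega}}$ for $\omega < \pi$) and then applies the defining inequality of $\mathcal{R}$-boundedness directly to finite Rademacher sums. Your extra checks --- well-definedness via orthogonality of the Rademacher functions, the density extension to $\Rad(X)$, and the $p$-independence via Kahane--Khintchine --- are routine details the paper leaves implicit.
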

				\begin{proof}
					If $A$ is $\mathcal{R}$-sectorial, one has $C \coloneqq \mathcal{R}\{ \lambda R(\lambda, A): \lambda \in \IR_{-} \} < \infty$. Hence, for all finite Rademacher sums we have by the definition of $\mathcal{R}$-boundedness
						\[ \biggnorm{\sum_{n=1}^N r_n q_n R(q_n,A)x_n} \le C \biggnorm{\sum_{n=1}^N r_n x_n}. \qedhere \]
				\end{proof}
				
				One now uses the freedom in the choice of the sequence $(q_n)_{n \in \IN}$. This is done in the following elementary lemma. We will see its usefulness very soon. 
				
				\begin{lemma}\label{lem:maximize_resolvent_sequence}
					For $\gamma_m > \gamma_{m-1} > 0$ consider the function $d(t) \coloneqq t [(t+\gamma_{m-1})^{-1} - (t+\gamma_{m})^{-1}]$ on $\IR_{+}$. Then $d$ has a maximum bigger than $\frac{1}{2} \frac{\gamma_m - \gamma_{m-1}}{\gamma_m + \gamma_{m-1}}$.
				\end{lemma}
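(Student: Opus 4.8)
The plan is to reduce $d$ to a single rational function, locate its maximum by elementary calculus, and then compare the exact maximal value with the claimed lower bound. First I would combine the two summands over a common denominator, obtaining
\[ d(t) = \frac{t(\gamma_m - \gamma_{m-1})}{(t+\gamma_{m-1})(t+\gamma_m)} = (\gamma_m - \gamma_{m-1}) \cdot \frac{t}{t^2 + (\gamma_{m-1}+\gamma_m)t + \gamma_{m-1}\gamma_m}. \]
Since the prefactor $\gamma_m - \gamma_{m-1}$ is a positive constant, it suffices to maximize $g(t) := t/\bigl((t+\gamma_{m-1})(t+\gamma_m)\bigr)$ on $\IR_{+}$.

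Next I would differentiate. A direct computation gives $g'(t) = (\gamma_{m-1}\gamma_m - t^2)/\bigl((t+\gamma_{m-1})(t+\gamma_m)\bigr)^2$, so the unique critical point in $\IR_{+}$ is the geometric mean $t^{\ast} = \sqrt{\gamma_{m-1}\gamma_m}$, and $g$ increases before $t^{\ast}$ and decreases after, so this is the global maximum. Substituting $t^{\ast}$ (it is convenient to write $a = \sqrt{\gamma_{m-1}}$ and $b = \sqrt{\gamma_m}$, so that the denominator factors as $ab(a+b)^2$) yields the clean closed form
\[ \max_{t > 0} d(t) = d(t^{\ast}) = \frac{\gamma_m - \gamma_{m-1}}{(\sqrt{\gamma_{m-1}} + \sqrt{\gamma_m})^2} = \frac{\sqrt{\gamma_m} - \sqrt{\gamma_{m-1}}}{\sqrt{\gamma_m} + \sqrt{\gamma_{m-1}}}, \]
where the last step uses the factorization $\gamma_m - \gamma_{m-1} = (\sqrt{\gamma_m} - \sqrt{\gamma_{m-1}})(\sqrt{\gamma_m} + \sqrt{\gamma_{m-1}})$.

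It then remains to verify the strict inequality $\frac{\sqrt{\gamma_m}-\sqrt{\gamma_{m-1}}}{\sqrt{\gamma_m}+\sqrt{\gamma_{m-1}}} > \tfrac{1}{2} \frac{\gamma_m - \gamma_{m-1}}{\gamma_m+\gamma_{m-1}}$. I would normalize by setting $x := \gamma_{m-1}/\gamma_m \in (0,1)$, which turns both sides into functions of $\sqrt{x}$; after cancelling the common positive factor $1 - \sqrt{x}$ (legitimate since $x < 1$) and clearing the positive denominators, the claim collapses to $2(1+x) > (1+\sqrt{x})^2$, that is, to $(1 - \sqrt{x})^2 > 0$, which holds precisely because $\gamma_{m-1} < \gamma_m$ forces $x \ne 1$.

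There is no serious obstacle here, as the lemma is elementary; the only thing to watch is the bookkeeping in the two algebraic simplifications, namely recognizing that the denominator at $t^{\ast}$ is a perfect multiple of $(a+b)^2$, and spotting the substitution $x = \gamma_{m-1}/\gamma_m$ that reduces the final comparison to the trivially true $(1-\sqrt{x})^2 > 0$. Should one wish to bypass the maximization entirely, it would also suffice to evaluate $d$ at the single test point $t = \sqrt{\gamma_{m-1}\gamma_m}$, since any value of $d$ is a lower bound for its maximum; this shortens the argument at the cost of not identifying the true maximizer.
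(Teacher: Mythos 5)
Your proof is correct, but it takes a genuinely different route from the paper's. The paper never maximizes $d$ itself: it uses the mean value theorem to minorize, namely $\frac{1}{t+\gamma_{m-1}} - \frac{1}{t+\gamma_m} \ge (\gamma_m - \gamma_{m-1})\frac{1}{(t+\gamma_m)^2}$, observes that the minorant $t \mapsto (\gamma_m-\gamma_{m-1})\,t/(t+\gamma_m)^2$ peaks at $t=\gamma_m$, and then simply evaluates $d(\gamma_m) = \frac{1}{2}\,\frac{\gamma_m - \gamma_{m-1}}{\gamma_m + \gamma_{m-1}}$ exactly --- an identity which is precisely what is consumed downstream (in the proof of Theorem~\ref{thm:counterexample_mrp_lp} one takes $\gamma_m = 2^m$ and $t = \gamma_{2m}$ to get the value $\frac{1}{6}$ on the nose). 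You instead carry out the full optimization: writing $d(t) = (\gamma_m - \gamma_{m-1})\, t/\bigl((t+\gamma_{m-1})(t+\gamma_m)\bigr)$, locating the true maximizer at the geometric mean $t^{\ast} = \sqrt{\gamma_{m-1}\gamma_m}$, computing the sharp value $(\sqrt{\gamma_m}-\sqrt{\gamma_{m-1}})/(\sqrt{\gamma_m}+\sqrt{\gamma_{m-1}})$, and reducing the comparison to $(1-\sqrt{x})^2>0$ with $x = \gamma_{m-1}/\gamma_m$; all of these computations check out. What your version buys: the exact maximum, and an honestly \emph{strict} inequality --- note that the paper's test point gives $d(\gamma_m)$ \emph{equal} to the stated bound, so read literally its argument only yields ``$\ge$''; strictness needs the additional (easy) remark that $\gamma_m$ is not the maximizer of $d$, which your calculus step supplies automatically. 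What the paper's version buys: brevity, and the exact evaluation at $t=\gamma_m$ that the application actually uses. Your closing observation that one could bypass the optimization by evaluating $d$ at a single test point is in fact exactly the paper's strategy, only with the test point $\gamma_m$ in place of $\sqrt{\gamma_{m-1}\gamma_m}$.
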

				\begin{proof}
					By the mean value theorem we have for some $\xi \in (\gamma_{m-1}, \gamma_m)$ and all $t > 0$ that
						\[ 
							\frac{1}{t + \gamma_{m-1}} - \frac{1}{t + \gamma_{m}} = (\gamma_m - \gamma_{m-1}) \frac{1}{(t + \xi)^2} \ge (\gamma_m - \gamma_{m-1}) \frac{1}{(t + \gamma_m)^2}.
						\]
					One now easily verifies that the function $t \mapsto (\gamma_m - \gamma_{m-1}) \frac{t}{(t+\gamma_m)^2}$ has a unique maximum for $t = \gamma_m$. In particular one has
						\[ \max_{t > 0} d(t) \ge d(\gamma_m) = \frac{1}{2} \frac{\gamma_m - \gamma_{m-1}}{\gamma_m + \gamma_{m-1}}. \qedhere \]
				\end{proof}
				
				We can now give examples of sectorial operators on $L_p$ which are not $\mathcal{R}$-sectorial.
								
				\begin{theorem}\label{thm:counterexample_mrp_lp}
					For $p \in (2, \infty)$ there exists a sectorial operator $A$ on $L_p([0,1])$ with $\omega(A) = 0$ which is not $\mathcal{R}$-sectorial.
				\end{theorem}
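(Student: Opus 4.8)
The plan is to build the operator as a Schauder multiplier with respect to a carefully chosen conditional basis of $L_p([0,1])$ assembled from blocks of the Haar system, so that $\mathcal{R}$-sectoriality is obstructed by the interplay between the conditionality of the basis and the $\ell_2$-summability forced by Lemmata~\ref{lem:unconditional} and~\ref{lem:unconditional_series}. First I would partition the Haar functions into consecutive finite blocks $B_1, B_2, \ldots$ and, on each block, replace the (unconditional) Haar vectors by a conditional rearrangement or perturbation whose spans consist of functions lying in a fixed set $M_\epsilon^p$. The point of staying inside $M_\epsilon^p$ is that, by Lemma~\ref{lem:unconditional_series}, any \emph{unconditional} subsequence built from such functions must behave like an $\ell_2$-basis; conversely, by controlling the conditional structure within a block one can make the unconditionality constants of the blocks blow up. I would then let $A$ be the Schauder multiplier assigning to the vectors in block $B_k$ a common eigenvalue $\gamma_k$, chosen as an increasing sequence of positive reals (so that Proposition~\ref{prop:sm_generator_semigroup} immediately yields that $A$ is sectorial with $\omega(A)=0$).

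The heart of the argument is to contradict $\mathcal{R}$-sectoriality. Here I would invoke Proposition~\ref{prop:resolvent_r_associated_operator}: if $A$ were $\mathcal{R}$-sectorial, then the operator $\mathcal{R}\colon \sum_n r_n x_n \mapsto \sum_n r_n\, q_n R(q_n,A) x_n$ would be bounded on $\Rad(L_p)$ uniformly over all choices of $(q_n)\subset \IR_-$. The strategy is to choose the $q_n$ adversarially, one per block, using Lemma~\ref{lem:maximize_resolvent_sequence} to make the resolvent $q_n R(q_n,A)$ act on block $B_{n}$ essentially as the \emph{difference} of two consecutive spectral projections, with a lower bound on the multiplier gap of order $\frac{\gamma_n - \gamma_{n-1}}{\gamma_n + \gamma_{n-1}}$. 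By spacing the eigenvalues so that these gaps stay bounded below, the operator $\mathcal{R}$ reproduces, up to constants, the conditional block structure of the basis. The required contradiction then comes from confronting two facts: the $\mathcal{R}$-bound would force an $\ell_2$-type square-function estimate across the blocks (via the $M_\epsilon^p$ membership and Lemma~\ref{lem:unconditional}), whereas the conditional nature of the basis makes the corresponding partial-sum or signed-sum norms grow without bound as the number of blocks increases.

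The main obstacle I anticipate is the construction of the conditional basis itself: one must produce, inside $L_p([0,1])$ for $p\in(2,\infty)$, a normalized Schauder basis whose members all lie in a single $M_\epsilon^p$, whose block spans are spectrally well-separated enough for Lemma~\ref{lem:maximize_resolvent_sequence} to bite, and yet whose conditionality constant grows in a controlled, quantitative way. The tension is genuine: Lemma~\ref{lem:unconditional_series} says that genuinely unconditional $M_\epsilon^p$-sequences are $\ell_2$-like and hence harmless, so the counterexample lives precisely in the regime where conditionality is \emph{strong} but the vectors are \emph{not} too flat. I would reconcile this by keeping each individual block unconditional (so the $M_\epsilon^p$ estimates apply within a block) while letting the \emph{inter}-block transitions carry all the conditionality, much as in the trigonometric and $A_p$-weighted examples of Example~\ref{exp:bip_nohinfty} and Example~\ref{ex:r_sectorial_without_bip}. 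Once the basis is in hand, verifying sectoriality via Proposition~\ref{prop:sm_generator_semigroup} and deriving the contradiction via Proposition~\ref{prop:resolvent_r_associated_operator} together with the two summability lemmata should be largely routine bookkeeping.
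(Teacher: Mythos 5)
Your proposal reproduces the paper's architecture --- a Schauder multiplier built from a conditional perturbation of the Haar system, sectoriality via Proposition~\ref{prop:sm_generator_semigroup}, and a contradiction via Proposition~\ref{prop:resolvent_r_associated_operator} together with Lemma~\ref{lem:maximize_resolvent_sequence} and the $M_\epsilon^p$ lemmata --- but it defers exactly the step that constitutes the proof. The paper's construction is concrete: choose disjointly supported normalized Haar functions $h_{m_k}$ (an unconditional sequence equivalent to the standard $\ell_p$-basis), a permutation $\pi$ of the even integers with $\pi(4k)=m_k$, and the block perturbation $f_{2m-1}=h_{2m-1}$, $f_{2m}=h_{\pi(2m)}+h_{2m-1}$, with multiplier $2^m$ on $f_m$. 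The contradiction is then a \emph{transference of convergence}, not a square-function estimate: one shows $(h_{\pi(4m)})$ and $(h_{4m+1})$ are inequivalent basic sequences --- the first is $\ell_p$-like by disjointness of supports, while suitable \emph{normalized block sums} $\tilde{b}_k$ of the second lie in a fixed $M_\epsilon^p$ and are therefore $\ell_2$-like by Lemma~\ref{lem:unconditional_series} --- whence there exists $(a_m)$ with $\sum_m a_m h_{\pi(2m)}$ convergent but $\sum_m a_m h_{2m-1}$ divergent. Feeding $x=\sum_m r_m a_m h_{\pi(2m)}$ into $\mathcal{R}$ with $q_m=-2^{2m}$ (so that Lemma~\ref{lem:maximize_resolvent_sequence} yields the fixed gap $\frac16$) extracts the divergent series $\sum_m a_m h_{2m-1}$ from a convergent one, which is the contradiction. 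Your plan never produces the pair of inequivalent sequences nor the coefficient sequence $(a_m)$; the closing claim that the remainder is ``routine bookkeeping'' conceals the entire content of the theorem.

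Two specific missteps would also derail the plan as written. First, assigning a \emph{common} eigenvalue $\gamma_k$ to all vectors of a block $B_k$ makes $q R(q,A)$ act on the span of $B_k$ as the scalar $q/(q-\gamma_k)$, so the operator $\mathcal{R}$ is blind to any conditionality placed \emph{within} a block; the conditional coupling must join directions carrying \emph{distinct} eigenvalues, which is precisely why the paper gives the two members of each conditional pair $(f_{2m-1},f_{2m})$ the consecutive values $2^{2m-1}\neq 2^{2m}$. Relatedly, $q_nR(q_n,A)$ is nowhere close to a ``difference of two consecutive spectral projections''; Lemma~\ref{lem:maximize_resolvent_sequence} only bounds from below the difference of the two \emph{scalar} multiplier values at consecutive eigenvalues, which suffices in the paper because the residual lands on the explicit vector $h_{2m-1}$. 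Second, your requirement that all basis members lie in a single $M_\epsilon^p$ is both unattainable and self-defeating: the normalized Haar functions satisfy $\normalabs{h_n}=2^{k/p}$ on a support of measure $2^{-k}$, so they escape every fixed $M_\epsilon^p$ as $k\to\infty$; and if every normalized unconditional subsequence of your system lay in a fixed $M_\epsilon^p$, Lemma~\ref{lem:unconditional_series} would force them all to be $\ell_2$-like, destroying the $\ell_p$-versus-$\ell_2$ dichotomy the contradiction feeds on. In the paper, $M_\epsilon^p$-membership is checked for the block sums $\tilde{b}_k$, not for the basis vectors, while the $\ell_p$-behaviour comes from disjoint supports --- the coexistence of these two geometries inside one coupled basis is exactly the construction your proposal leaves open.
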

				\begin{proof}
					Until the rest of the proof let $(h_m)_{m \in \IN}$ denote the normalized Haar system on $L_p([0,1])$. Choose a subsequence $(m_k)_{k \in \IN} \subset 2\IN$ such that the functions $h_{m_k}$ have pairwise disjoint supports. Then $(h_{m_k})_{k \in \IN}$ is an unconditional basic sequence equivalent to the standard basis of $\ell_p$. Indeed, for any finite sequence $a_1, \ldots, a_N$ we have by the disjointness of the supports
					\[ \norm{\sum_{k=1}^N a_k h_{m_k}}_p^p = \sum_{k=1}^N \norm{a_k h_{m_k}}_p^p = \sum_{k=1}^N \abs{a_k}^p. \] 
				Choose a permutation of the even numbers such that $\pi(4k) = m_k$. We now define a new system $(f_m)_{m \in \IN}$ as
						\begin{equation*}
							f_m \coloneqq \begin{cases} h_{\pi(m)} \\ h_{\pi(m)} + h_{\pi(m-1)} \end{cases} = \begin{cases} h_{m} & m \text{ odd} \\ h_{\pi(m)} + h_{m-1} & m \text{ even}. \end{cases}
						\end{equation*}
					Notice that by the unconditionality of the Haar basis $(h_{\pi(m)})_{m \in \IN}$ is a Schauder basis of $L_p([0,1])$ as well. As a block perturbation of the normalized basis $(h_{\pi(m)})_{m \in \IN}$ the sequence $(f_m)_{m \in \IN}$ is a basis for $L_p([0,1])$ as well~\cite[Ch.~I, §4, Proposition~4.4]{Sin70}. Further, let $A$ be the closed linear operator on $L_p([0,1])$ given by
						\begin{align*}
							D(A) = \biggl\{ x = \sum_{m=1}^{\infty} a_m f_m: \sum_{m=1}^{\infty} 2^m a_m f_m \text{ exists} \biggr\} \\
							A \biggl(\sum_{m=1}^{\infty} a_m f_m \biggr) = \sum_{m=1}^{\infty} 2^m a_m f_m.
						\end{align*}
					Proposition~\ref{prop:sm_generator_semigroup} shows that $A$ is sectorial with $\omega(A) = 0$.					
					The basic sequences $(h_{\pi(4m)})_{m \in \IN}$ and $(h_{4m + 1})_{m \in \IN}$ are not equivalent: assume that the two basic sequences are equivalent. Then on the one hand for $(h_{4m + 1})_{m \in \IN}$ the block basic sequence
					 	\[ b_k = \sum_{\substack{m: 4m + 1 \\ \in [2^k+1, 2^{k+1}]}} h_{4m + 1}  \]
					satisfies for $k \ge 2$ by the disjointness of the summands 
					 	\[ \norm{b_k}_p^p = \sum_{\substack{m: 4m + 1 \\ \in [2^k+1,2^{k+1}]}} \norm{h_{4m + 1}}_p^p = \sum_{\substack{m: 4m + 1 \\ \in [2^k+1,2^{k+1}]}} 1 = \frac{1}{4} \cdot 2^k = 2^{k-2}. \]
					Moreover, on the non-vanishing part $b_k$ satisfies $\abs{b_k(t)} = 2^{k/p}$ for $k \ge 2$. Hence, for the normalized block basic sequence $(\tilde{b}_k)_{k \ge 2} = (\frac{b_k}{\norm{b_k}_p})_{k \ge 2}$ one has $\normalabs{\tilde{b}_k(t)} = 2^{2/p}$. Therefore we have				
						\[ \lambda \left( \left\{ t \in [0,1]: \normalabs{\tilde{b}_k(t)} \ge \epsilon \normalnorm{\tilde{b}_k}_p \right\} \right) = \lambda \left( \left\{ t \in [0,1]: \normalabs{\tilde{b}_k(t)} \ge \epsilon \right\} \right) = \frac{1}{4} \]
					for $\epsilon \le 2^{2/p}$. In particular for $\epsilon \le \frac{1}{4}$ we have $\tilde{b}_k \in M_{\epsilon}^p$ for all $k \ge 2$. By Lemma~\ref{lem:unconditional_series} this implies that $(\tilde{b}_k)_{k \ge 2}$ is equivalent to the standard basis in $\ell_2$.
					
					  Since we have assumed that the basic sequence $(h_{\pi(4k)})_{k \in \IN}$ is equivalent to $(h_{4k+1})_{k \in \IN}$, the block basic sequence $(c_k)_{k \ge 2}$ defined by
					  	\[ c_k = \norm{b_k}_p^{-1} \sum_{\substack{m: 4m + 1 \\ \in [2^k+1,2^{k+1}]}} h_{\pi(4m)} \]
					  is seminormalized. Recall that $(h_{\pi(4m)})_{m \in \IN}$ is equivalent to the standard basis of $\ell_p$. Since all semi-normalized block basic sequences of $\ell_p$ are equivalent to the standard basis of $\ell_p$~\cite[Lemma~2.1.1]{AlbKal06}, the sequence $(c_k)_{k \ge 2}$ is equivalent to the standard basis of $\ell_p$. Altogether we have shown that the standard basic sequences of $\ell_p$ and $\ell_2$ are equivalent, which is obviously wrong.
					  
					  In particular, the above arguments show that there is a sequence $(a_m)_{m \in \IN}$ which converges with respect to $(h_{\pi(2m)})_{m \in \IN}$ but not with respect to $(h_{2m+1})_{m \in \IN}$. Now assume that $A$ is $\mathcal{R}$-sectorial. Let $(q_m)_{m \in \IN} \subset \IR_{-}$ be a sequence to be chosen later. Then it follows from Proposition~\ref{prop:resolvent_r_associated_operator} that the operator $\mathcal{R}\colon \Rad(L_p([0,1])) \to \Rad(L_p([0,1]))$ associated to the sequence $(q_n)_{n \in \IN}$ is bounded. We now show that
						\begin{equation} 
							x = \sum_{m=1}^{\infty} a_m h_{\pi(2m)} r_m \label{eq:ce_argument}
						\end{equation}
					converges in $\Rad(L_p([0,1]))$. Indeed, for a fixed $\omega \in [0,1]$ the infinite series $\sum_{m=1}^{\infty} a_m r_m(\omega) h_{\pi(2m)}$ converges by the unconditionality of the basic sequence $(h_{\pi(2m)})_{m \in \IN}$ as $r_m(\omega) \in \{-1, 1\}$. Hence, the above series defines a measurable function as the pointwise limit of measurable functions. Moreover, if $K$ denotes the unconditional constant of $(h_{\pi(2m)})_{m \in \IN}$, one has for each $\omega \in [0,1]$
						\begin{equation} 
							\biggnorm{\sum_{m=1}^{\infty} r_m(\omega) a_m h_{\pi(2m)}} \le K \biggnorm{\sum_{m=1}^{\infty} a_m h_{\pi(2m)}}. \label{eq:ce_l1_convergence} 
						\end{equation}
					This shows that the series \eqref{eq:ce_argument} is in $L_1([0,1]; L_p([0,1]))$. Using an analogous estimate as \eqref{eq:ce_l1_convergence} one sees that the sequence of partial sums $\sum_{m=1}^{N} a_m h_{\pi(2m)} r_m$ converges to $\sum_{m=1}^{\infty} a_m h_{\pi(2m)} r_m$ in $\Rad(L_p([0,1]))$. We now apply $\mathcal{R}$ to $x$. Because of $h_{\pi(2m)} = f_{2m} - f_{2m-1}$ we obtain
					\begin{align*}
						g & \coloneqq \mathcal{R}(x) = \mathcal{R} \biggl( \sum_{m=1}^{\infty} a_m (f_{2m} - f_{2m-1}) r_m \biggr) \\
						& = \sum_{m=1}^{\infty} \frac{a_m q_m}{q_m - \gamma_{2m}} f_{2m} - \frac{a_m q_m}{q_m - \gamma_{2m-1}} f_{2m-1} \\
						& = \sum_{m=1}^{\infty} \frac{a_m q_m}{q_m - \gamma_{2m}} (h_{\pi(2m)} + h_{2m-1}) - \frac{a_m q_m}{q_m - \gamma_{2m-1}} h_{2m-1} \\
						& = \sum_{m=1}^{\infty} \frac{a_m q_m}{q_m - \gamma_{2m}} h_{\pi(2m)} + a_m q_m \biggl( \frac{1}{q_m - \gamma_{2m}} - \frac{1}{q_m - \gamma_{2m-1}} \biggr) h_{2m-1}.
					\end{align*}
					We now want to choose $(q_m)_{m \in \IN}$ in such a way that the last term in the bracket is big. Notice that if we set $\gamma_{m} = 2^m$, then by Lemma~\ref{lem:maximize_resolvent_sequence} for $t = \gamma_{2m}$ one has $t [ (t + \gamma_{2m-1})^{-1} - (t + \gamma_{2m})^{-1} ] = \frac{1}{6}$. Hence, for the choice $q_m = -\gamma_{2m}$ we obtain
						\[
							\mathcal{R}(x) = \sum_{m=1}^{\infty} \frac{1}{2} a_m h_{\pi(2m)} - \frac{1}{6} a_m h_{2m-1}.
						\]

					Then after choosing a subsequence $(N_k)$ there exists a set $N \subset [0,1]$ of measure zero such that
						\begin{align}
							\sum_{m=1}^{N_k} \frac{1}{2} a_m r_m(\omega) h_{\pi(2m)} - \frac{1}{6} a_m r_m(\omega) h_{2m-1} \xrightarrow[k \to \infty]{} g(\omega) \quad \text{for all } \omega \in N^c. \label{eq:ce_after_riesz}
						\end{align}
					Applying the coordinate functionals for $(h_m)_{m \in \IN}$ to \eqref{eq:ce_after_riesz} shows that for $\omega \in N^c$ the unique coefficients $(h^*_m(g(\omega)))$ of the expansion of $g(\omega)$ with respect to $(h_m)_{m \in \IN}$ satisfy $h^*_{2m-1}(g(\omega)) = -\frac{a_m}{6} r_m(\omega)$. Since $(h_m)_{m \in \IN}$ is unconditional
					\begin{align*}
						\sum_{m=1}^{\infty} a_m r_m(\omega) h_{2m-1} \quad \text{and therefore} \quad \sum_{m=1}^{\infty} a_m h_{2m-1} \quad \text{converge.}
					\end{align*}
					This contradicts the choice of $(a_m)_{m \in \IN}$ and therefore $A$ cannot be $\mathcal{R}$-sectorial.  
				\end{proof}
				
				Note that by taking the adjoint operators $A^*$ of the above counterexamples one obtains counterexamples on the range $p \in (1,2)$. Further, the above argument works for every Banach space that admits an unconditional normalized non-symmetric basis~\cite{Fac14}. This allows one to prove the following result by N.J.~Kalton \& G.~Lancien~\cite{KalLan00}.
				
				\begin{theorem}\label{thm:Kalton-Lancien}
					Let $X$ be a Banach space that admits an unconditional basis. Then every negative generator of an analytic semigroup is $\mathcal{R}$-sectorial if and only if $X$ is isomorphic to a Hilbert space.
				\end{theorem}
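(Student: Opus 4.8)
The equivalence consists of an easy and a hard implication, and I would treat them separately.

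For the implication that a space isomorphic to a Hilbert space admits only $\mathcal{R}$-sectorial analytic generators, I would argue straight from the coincidence of the two boundedness notions recorded after the definition of $\mathcal{R}$-boundedness: by \cite[Proposition~1.13]{AreBu02} a subset of $\mathcal{B}(X)$ is $\mathcal{R}$-bounded if and only if it is norm bounded, precisely because $X$ is isomorphic to a Hilbert space. If $-A$ generates an analytic semigroup, then $A$ is sectorial of angle $\omega(A) < \frac{\pi}{2}$, so the family $\{ \lambda R(\lambda,A) : \lambda \not\in \overline{\Sigma_{\omega + \epsilon}} \}$ is norm bounded for every $\epsilon > 0$ by the definition of sectoriality. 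Norm boundedness now upgrades for free to $\mathcal{R}$-boundedness, and hence $A$ is $\mathcal{R}$-sectorial. This direction uses nothing beyond the excerpt.

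For the converse I would argue by contraposition: assuming $X$ has an unconditional basis but is \emph{not} isomorphic to a Hilbert space, I would construct a sectorial operator with $\omega(A) = 0$ that is not $\mathcal{R}$-sectorial; by Proposition~\ref{prop:sm_generator_semigroup} any such operator is automatically a negative generator of an analytic semigroup. The engine is the coupling construction from the proof of Theorem~\ref{thm:counterexample_mrp_lp}, which according to \cite{Fac14} works on any Banach space carrying an unconditional basis that contains two disjointly supported, normalized block sequences $(u_m)_m$ and $(v_m)_m$ which are \emph{not} equivalent. One builds a block perturbation $(f_m)_m$ of the basis coupling the two sequences --- arranged, as in Theorem~\ref{thm:counterexample_mrp_lp}, so that each $u_m$ is a difference $f_{2m} - f_{2m-1}$ of basis vectors --- and takes the Schauder multiplier $A$ with eigenvalues $\gamma_m = 2^m$. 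Feeding a Rademacher sum $\sum_m a_m u_m r_m$, convergent in $\Rad(X)$ whenever $(a_m)$ is admissible for $(u_m)$, into the resolvent operator of Proposition~\ref{prop:resolvent_r_associated_operator} with the choice $q_m = -2^{2m} \in \IR_{-}$, Lemma~\ref{lem:maximize_resolvent_sequence} keeps the arising off-diagonal coefficient uniformly bounded below; hence $\mathcal{R}$-sectoriality of $A$ would force $\sum_m a_m v_m$ to converge as well. Choosing $(a_m)$ admissible for $(u_m)$ but not for $(v_m)$ --- possible exactly because $(u_m)$ and $(v_m)$ are inequivalent --- produces the contradiction. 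The problem thus reduces to producing, on every non-Hilbertian $X$ with an unconditional basis, such a pair of inequivalent disjoint block sequences.

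This reduction is the Banach-space-theoretic heart of the matter and is the step I expect to be the main obstacle. By the Lindenstrauss--Zippin theorem the only spaces whose unconditional basis is unique up to equivalence and permutation --- in particular, all of whose unconditional bases are symmetric, so that any two disjoint normalized block sequences are equivalent --- are $c_0$, $\ell_1$ and $\ell_2$. For $X$ isomorphic to none of these three the required inequivalent block sequences can be extracted (this extraction, carried out in \cite{Fac14}, is precisely where non-Hilbertianity is spent), and the construction of the previous paragraph applies. The case $X \cong \ell_2$ is the Hilbert case, where there is nothing to prove. The two genuinely exceptional spaces $c_0$ and $\ell_1$ lie outside the coupling scheme, since all their disjoint normalized block sequences are mutually equivalent; for these one falls back on the original, more hands-on argument of Kalton and Lancien~\cite{KalLan00}, which does not proceed through a comparison of two block sequences. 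Assembling the generic case with these two boundary cases completes the contrapositive and hence the theorem.
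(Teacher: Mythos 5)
Your overall architecture agrees with the paper's, which in fact gives no detailed proof here: the easy direction is implicit in the earlier remark that $\mathcal{R}$-boundedness and norm boundedness coincide exactly on spaces isomorphic to Hilbert spaces (\cite[Proposition~1.13]{AreBu02}), and for the hard direction the paper merely records that the coupling argument of Theorem~\ref{thm:counterexample_mrp_lp} \enquote{works for every Banach space that admits an unconditional normalized non-symmetric basis}, citing \cite{Fac14}, and that this yields the result of \cite{KalLan00}. Your first paragraph is correct, and your recognition that the Banach-space-theoretic reduction is where the real work lies, as well as your observation that $c_0$ and $\ell_1$ fall outside the coupling scheme and must be handled by other means, are both consistent with what the paper's citations actually carry.

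There is, however, a genuine gap in your formulation of the coupling hypothesis. You require two inequivalent, disjointly supported, normalized \emph{block} sequences $(u_m)$ and $(v_m)$, but this is not sufficient to run the construction \emph{on $X$}: the coupled system $(f_m)$ must be a Schauder basis of the whole space, or at least span a complemented subspace, in order for Proposition~\ref{prop:sm_generator_semigroup} to produce a sectorial operator on $X$. In the proof of Theorem~\ref{thm:counterexample_mrp_lp} this is automatic because both coupled sequences, $(h_{\pi(2m)})$ and $(h_{2m-1})$, are \emph{subsequences} of a single unconditional basis, so that $(f_m)$ is a block perturbation of the reordered basis $(h_{\pi(m)})$ and hence a basis of $L_p([0,1])$. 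For arbitrary disjoint block sequences the closed span $\overline{\linspan}\{u_m, v_m : m \in \IN\}$ need not be complemented in $X$ (outside $\ell_p$-like spaces, unconditionality of the underlying basis does not complement spans of blocks), so your multiplier is only defined on a subspace and gives no counterexample on $X$ itself; this is exactly why the hypothesis quoted from \cite{Fac14} is non-symmetry of the basis --- inequivalence of the basis under a permutation, which produces inequivalent disjointly \emph{indexed} subsequences --- rather than inequivalence of block sequences. Relatedly, your appeal to Lindenstrauss--Zippin proves less than you claim: failure of uniqueness of the unconditional basis up to equivalence and permutation yields two inequivalent unconditional bases of $X$, not directly a single basis with two inequivalent subsequences (both bases could a priori be symmetric), and bridging this --- together with the subsymmetric-but-non-symmetric case --- is precisely the work done in \cite{Fac14}, which you defer to but mischaracterize as an extraction of block sequences. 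Since the paper itself only cites \cite{Fac14} and \cite{KalLan00} at these points, your proposal matches its route in outline, but the two reductions just named would fail as you literally state them.
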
 
				
				Note that on $L_{\infty}([0,1])$ by a result of H.P.~Lotz~\cite[Theorem 3]{Lot85} every negative generator of a $C_0$-semigroup is already bounded and therefore $\mathcal{R}$-sectorial. However, the following questions are open~\cite[p.~68]{Kal01}.
				
				\begin{problem}
					Does Theorem~\ref{thm:Kalton-Lancien} hold in the bigger class of all Banach spaces admitting a Schauder basis / of all separable Banach spaces?
				\end{problem}
				
				For partial results in this direction see~\cite{KalLan02}.

	\section{Counterexamples II: Using Monniaux's Theorem}\label{sec:monniaux} %
	
		In this section we present an alternative method to construct counterexamples. This method is based on a theorem of S.~Monniaux. We consider the following straightforward analogue of sectorial operators on strips. For details see~\cite[Ch.~4]{Haa06}.
		
		\begin{definition}
			For $\omega > 0$ let $H_{\omega} \coloneqq \{ z \in \IC: \abs{\Im z} < \omega \}$ be the \emph{horizontal strip} of height $2\omega$. A closed densely defined operator $B$ is called a \emph{strip type operator} of height $\omega > 0$ if $\sigma(B) \subset \overline{H_{\omega}}$ and
				\begin{equation*} 
					\sup \{ \norm{R(\lambda, B)}: \abs{\Im \lambda} \ge \omega + \epsilon \} < \infty \qquad \text{for all } \epsilon > 0. \label{eq:strip_condition}\tag{$H_{\omega}$}
				\end{equation*}
			Further, we define the \emph{spectral height} of $B$ as $\omega_{st}(B) \coloneqq \inf \{ \omega > 0: \eqref{eq:strip_condition} \text{ holds} \}$.				\end{definition}
		
		Recall that if $A$ is a sectorial operator with bounded imaginary powers, then $t \mapsto A^{it}$ is a strongly continuous group. Conversely, one may ask which $C_0$-groups can be written in this form. The following theorem of S.~Monniaux~\cite{Mon99} gives a very satisfying answer to this question (for an alternative proof see \cite[Section~4]{Haa07}).
				
		\begin{theorem}\label{thm:monniaux}\label{theorem!Monniaux}
			Let $X$ be a UMD-space. Then there is an one-to-one correspondence
			\[ 
				\bigg\{ \parbox[c][2em][c]{0.42\textwidth}{\begin{center} $A$ sectorial operator with BIP and $\omega_{\mathrm{BIP}}(A) < \pi$ \end{center}} \bigg\}
				\xlongleftrightarrow[e^{B}]{\log A}
				\bigg\{ \parbox[c][2em][c]{0.40\textwidth}{\begin{center} $B$ strip type operator with $iB \sim C_0$-group of type $< \pi$ \end{center}} \bigg\}.
			\]
		\end{theorem}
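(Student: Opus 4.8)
The plan is to realise both maps through the holomorphic functional calculi for sectorial and for strip type operators, which are intertwined by the conformal map $\exp\colon H_{\omega}\to\Sigma_{\omega}$ (a biholomorphism for $\omega<\pi$ whose inverse is the principal branch of $\log$). Concretely, for $f$ holomorphic on $\Sigma_{\omega}$ one expects $f(e^{B})=(f\circ\exp)(B)$ and for $g$ holomorphic on $H_{\omega}$ one expects $g(\log A)=(g\circ\log)(A)$; applying this to $f(\lambda)=\lambda^{it}$ and its pullback $g(z)=e^{itz}$ gives the single identity that drives everything, namely
\[ A^{it}=e^{itB}=U(t), \]
where $(U(t))_{t\in\IR}$ is the group generated by $iB$. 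This already shows why the two sides match: $A$ has BIP precisely when $iB=i\log A$ generates a $C_0$-group (which is the cited generation theorem of Haase), and the exponential type of that group equals $\omega_{\mathrm{BIP}}(A)$, so the constraints $\omega_{\mathrm{BIP}}(A)<\pi$ and ``type $<\pi$'' correspond.

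For the forward map $A\mapsto B\coloneqq\log A$ I would argue without invoking UMD. Since $\log$ is holomorphic with at most logarithmic growth on a sector, $\log A$ is a well-defined closed operator, and by the cited result $iB$ generates the $C_0$-group $(A^{it})_{t}$, whose type is $\omega_{\mathrm{BIP}}(A)<\pi$. To see that $B$ is of strip type of height $\omega_{\mathrm{BIP}}(A)$, I would use the Laplace representation of the resolvent of a group generator: if $\norm{U(t)}\le Me^{\theta\abs{t}}$, then for $\abs{\Im\nu}>\theta$ one has $\nu\in\rho(B)$ together with the bound $\norm{R(\nu,B)}\le M/(\abs{\Im\nu}-\theta)$, obtained by integrating $U$ against the appropriate exponential (via the identification $i\nu\leftrightarrow iB$). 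This is exactly $(H_{\omega})$ for every $\omega>\omega_{\mathrm{BIP}}(A)$, so $\omega_{st}(B)=\omega_{\mathrm{BIP}}(A)<\pi$.

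The converse map $B\mapsto A\coloneqq e^{B}$ is where the UMD hypothesis is essential. Defining $e^{B}$ through the extended functional calculus for strip operators (the function $e^{z}$ being holomorphic but unbounded on the strip, exactly as $\log$ was on the sector), the identity $A^{it}=U(t)$ gives BIP of $A$ with $\omega_{\mathrm{BIP}}(A)=$ type of $(U(t))<\pi$ for free. The genuine work is to show that $A=e^{B}$ is \emph{sectorial}. I would reconstruct its resolvent from the group via a Mellin--Dunford integral of the schematic form
\[ (\lambda+A)^{-1}=\frac{1}{2\pi}\int_{-\infty}^{\infty}\frac{\pi}{\sin\pi(c+is)}\,\lambda^{c-1}e^{is\log\lambda}\,A^{-c}U(-s)\,ds\qquad(0<c<1), \]
in which the kernel decays like $e^{-\pi\abs{s}}$ while $\norm{U(-s)}$ grows only like $e^{\theta\abs{s}}$ with $\theta<\pi$; this is precisely why the bound $<\pi$ is forced and makes the integral converge. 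Turning this convergence into the \emph{uniform} sectorial estimate $\sup_{\lambda\notin\overline{\Sigma_{\omega}}}\norm{\lambda R(\lambda,A)}<\infty$ is the main obstacle, since a naive norm bound on the integrand is not uniform in $\lambda$. The standard remedy, and the point at which UMD enters, is to pass through the transference principle of Coifman--Weiss: the relevant expressions in the group $(U(t))$ are controlled by Fourier multipliers on $L_{p}(\IR;X)$ associated to the above kernels, and the boundedness of these operator-valued multipliers follows from the vector-valued Mikhlin theorem, equivalently from the boundedness of the vector-valued Hilbert transform, which is exactly the UMD property.

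Finally I would check that the two maps are mutually inverse. The relations $\log(e^{B})=B$ and $e^{\log A}=A$ follow from $\log\circ\exp=\mathrm{id}_{H_{\omega}}$ and $\exp\circ\log=\mathrm{id}_{\Sigma_{\omega}}$ together with the composition rule for the functional calculi; injectivity on each side is automatic since $A$ is recovered from the group $(A^{it})_{t}$ (its generator being $i\log A$) and $B$ from $(U(t))_{t}$. I expect the only delicate point of this last step to be the justification of the composition rule across the two calculi, which is routine once the estimates of the previous paragraph are in place. The principal difficulty of the whole theorem is thus concentrated in the sectoriality of $e^{B}$ and its UMD-dependent resolvent bound.
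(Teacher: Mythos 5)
Your forward direction is correct and, as you rightly observe, needs no UMD assumption: BIP gives the $C_0$-group $(A^{it})_{t \in \IR}$ with generator $i\log A$, and the Laplace representation of the resolvent of a group generator yields the strip estimate $(H_{\omega})$ for every $\omega$ above the group type. The converse direction, however, is where the entire weight of the theorem sits, and there your argument has a genuine gap: the displayed Mellin integral is circular, since its integrand contains $A^{-c}$ --- a fractional power of the very operator you are trying to construct. To make this non-circular one must either define the candidate powers $A^{-c-is}$ directly from the group by analytic continuation (the analytic generator in the sense of Cioranescu--Zsid\'o, which is the route taken in Monniaux's paper \cite{Mon99}), or, as in Haase's alternative proof \cite{Haa07}, first prove that on a UMD space the generator $B$ of a group of type $\theta < \pi$ has a bounded $H^{\infty}$-calculus on strips of height $> \theta$ (this is where transference and the vector-valued Mikhlin theorem actually do their work) and then obtain sectoriality of $A = e^{B}$ by applying that calculus to the functions $z \mapsto \lambda(\lambda - e^{z})^{-1}$, which are bounded on $H_{\omega}$ uniformly in $\lambda \notin \overline{\Sigma_{\omega'}}$, $\omega' > \omega$. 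You correctly locate the role of UMD (Coifman--Weis transference plus vector-valued Mikhlin, i.e.\ boundedness of the vector-valued Hilbert transform), but the uniform estimate $\sup_{\lambda}\normalnorm{\lambda R(\lambda,A)} < \infty$, together with the verification that your candidate family satisfies the resolvent identity and that $A^{it} = U(t)$, is left as a pointer --- and that \emph{is} the cited theorem, not a routine remedy. For comparison, the paper does not reprove any of this: its proof is a few lines, quoting Monniaux's theorem \cite[Theorem~4.3]{Mon99} for surjectivity, obtaining $B = \log A$ from the uniqueness of the generator of $(U(t))_{t \in \IR}$ --- which neatly sidesteps the cross-calculus composition rule $\log(e^{B}) = B$ that you flag as delicate --- and using $A = e^{\log A}$ \cite[Corollary~4.2.5]{Haa06} for injectivity.

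There is also one concrete factual error: you assert $\omega_{st}(B) = \omega_{\mathrm{BIP}}(A)$, but your Laplace-transform argument only gives $\omega_{st}(B) \le \omega_{\mathrm{BIP}}(A)$, and equality fails in general. Indeed, the fractional-power examples behind Remark~\ref{rem:bip_angle_bigger} (after Haase \cite{Haa03}) produce sectorial operators with $\omega(\tilde{A}) < \omega_{\mathrm{BIP}}(\tilde{A}) < \pi$ on UMD spaces; since $\omega_{st}(\log \tilde{A}) \le \omega(\tilde{A})$ (see \cite[Section~4.2]{Haa06}), the strip height of $B = \log \tilde{A}$ is then strictly smaller than the type of the group $(\tilde{A}^{it})_{t \in \IR}$. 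This does not damage the correspondence --- the right-hand side only requires some finite strip height together with group type $< \pi$ --- but the equality claim should be deleted.
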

		\begin{proof}
			For the surjectivity let $B$ be a strip type operator such that $iB$ generates a $C_0$-group $(U(t))_{t \in \IR}$ of type $< \pi$. Then by Monniaux's theorem~\cite[Theorem~4.3]{Mon99} there exists a sectorial operator $A$ with bounded imaginary powers such that $A^{it} = U(t)$ for all $t \in \IR$. Moreover, $(U(t))_{t \in \IR}$ is generated by $i \log A$. It then follows from the uniqueness of the generator that $B = \log A$.
			
			For the injectivity assume that $\log A = \log B$ for two sectorial operators from the left-hand side. Then by \cite[Corollary 4.2.5]{Haa06} one has $A = e^{\log A} = e^{\log B} = B$.
		\end{proof}
		
		\begin{remark}\label{rem:bip_angle_bigger}
			In \cite{Haa03} M.~Haase shows that for every strip type operator $B$ with $\omega_{st}(B) < \pi$ such that $iB$ generates a $C_0$-group $(U(t))_{t \in \IR}$ of arbitrary type there exists a sectorial operator $A$ with $A^{it} = U(t)$ for all $t \in \IR$. If one chooses $B$ as above such that $(U(t))_{t \in \IR}$ has group type bigger than $\pi$ (which is possible on some UMD-spaces) one sees that there exists a sectorial operator $A$ with $\omega_{\text{BIP}}(A) > \pi$. By taking suitable fractional powers of $A$ one then obtains a sectorial operator $\tilde{A}$ with $\omega(\tilde{A}) < \omega_{\text{BIP}}(\tilde{A}) < \pi$.
		\end{remark}
		
		Because of the above results, for a moment, we restrict our attention to a UMD-space $X$. A particular class of sectorial operators with bounded imaginary powers are those with a bounded $H^{\infty}$-calculus. Recall that a sectorial operator $A$ on $X$ with a bounded $H^{\infty}$-calculus satisfies $\omega_R(A) = \omega_{\text{BIP}}(A) = \omega_{H^{\infty}}(A)$ by Theorem~\ref{thm:hinfty_implies_rsectorial}. In particular one has $\omega_{\text{BIP}}(A) < \pi$. For sectorial operators with a bounded $H^{\infty}$-calculus one can formulate an analogous correspondence which essentially follows from an unpublished result of N.J.~Kalton \& L.~Weis. 
		
		In the following for a $C_0$-group $(U(t))_{t \in \IR}$ on some Banach space we call the infimum of those $\omega > 0$ for which $\R{e^{-\omega \abs{t}} U(t): t \in \IR} < \infty$ the $\mathcal{R}$-group type\index{group!$\mathcal{R}$-type} of $(U(t))_{t \in \IR}$.
		
		\begin{theorem}\label{thm:monniaux_for_hinfty}
			Let $X$ be a Banach space with Pisier's property $(\alpha)$. Then there is an one-to-one correspondence
				\[
					\bigg\{ \parbox[c][2em][c]{0.42\textwidth}{\begin{center} $A$ sectorial operator with bounded $H^{\infty}$-calculus \end{center}} \bigg\}
					\xlongleftrightarrow[e^{B}]{\log A}
					\bigg\{ \parbox[c][2em][c]{0.40\textwidth}{\begin{center} $B$ strip type operator with $iB \sim$ $C_0$-group of $\mathcal{R}$-type $< \pi$ \end{center}} \bigg\}.
				\]
		\end{theorem}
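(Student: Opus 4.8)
The plan is to mimic the proof of Theorem~\ref{thm:monniaux}, replacing the ordinary group type by the $\mathcal{R}$-group type and using Pisier's property $(\alpha)$ to pass between the two. The two maps are again $A \mapsto \log A$ and $B \mapsto e^B$, and one identifies the sectorial and the strip calculi through the substitution $\lambda = e^z$: if $A = e^B$ then $A^{it} = e^{itB} = U(t)$, and for $f \in H^{\infty}(\Sigma_{\sigma})$ the function $g(z) \coloneqq f(e^z)$ lies in $H^{\infty}(H_{\sigma})$ with $f(A) = g(B)$, so that a bounded $H^{\infty}(\Sigma_{\sigma})$-calculus for $A$ is the same as a bounded $H^{\infty}(H_{\sigma})$-calculus for $B$. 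Granting that both maps land in the asserted classes, the bijectivity follows exactly as in Theorem~\ref{thm:monniaux} from the identity $e^{\log A} = A$~\cite[Corollary~4.2.5]{Haa06} together with the uniqueness of the generator; hence all the content lies in the two statements that the maps respect the classes.

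First I would treat the direction $A \mapsto \log A$. If $A$ has a bounded $H^{\infty}$-calculus then in particular $A$ has BIP with $\omega_{\mathrm{BIP}}(A) \le \omega_{H^{\infty}}(A) < \pi$, so $B = \log A$ is a strip type operator and $iB$ generates the $C_0$-group $(A^{it})_{t \in \IR}$, placing $B$ in the right-hand class of Theorem~\ref{thm:monniaux}. To upgrade ordinary type to $\mathcal{R}$-type I would fix $\sigma' \in (\omega_{H^{\infty}}(A), \pi)$ and apply Theorem~\ref{thm:h_infty_generates_R_bounded_sets} to the functions $f_t(\lambda) = \lambda^{it}$. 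On $\Sigma_{\sigma'}$ one has $\abs{\lambda^{it}} = e^{-t \arg \lambda} \le e^{\sigma' \abs{t}}$, hence $\norm{e^{-\sigma' \abs{t}} f_t}_{H^{\infty}(\Sigma_{\sigma'})} \le 1$ for all $t$. By Theorem~\ref{thm:h_infty_generates_R_bounded_sets} the set $\{ e^{-\sigma' \abs{t}} A^{it} : t \in \IR \}$ is then $\mathcal{R}$-bounded, so the $\mathcal{R}$-group type of $(A^{it})$ is at most $\sigma'$; letting $\sigma' \downarrow \omega_{H^{\infty}}(A)$ gives $\mathcal{R}$-type $< \pi$.

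The converse direction $B \mapsto e^B$ is the essential one. Let $B$ be a strip type operator such that $iB$ generates a $C_0$-group $(U(t))_{t \in \IR}$ of $\mathcal{R}$-group type $\theta < \pi$; since $\mathcal{R}$-boundedness implies norm boundedness, its ordinary type is also at most $\theta$, and by Haase's construction (Remark~\ref{rem:bip_angle_bigger}) there is a sectorial operator $A = e^B$ with $A^{it} = U(t)$, so that $A$ has BIP and $\log A = B$. It remains to produce a bounded $H^{\infty}$-calculus. Using $\lambda = e^z$ as above, for $f \in H^{\infty}_0(\Sigma_{\sigma})$ with $\theta < \sigma < \pi$ and $g(z) = f(e^z)$ one would represent $g(B)$ as an operator-valued Fourier integral $g(B) = \frac{1}{2\pi} \int_{\IR} m(t) U(t) \, dt$ against the group, the symbol $m$ carrying a factor $e^{-\theta \abs{t}}$ coming from the holomorphy of $g$ on a wider strip. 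For a general bounded $f$ this symbol is only bounded rather than integrable, so the norm bound $\norm{e^{-\theta \abs{t}} U(t)} \le C$ does not control $g(B)$ by $\norm{f}_{\infty}$. Instead one invokes the $\mathcal{R}$-boundedness of $\{ e^{-\theta \abs{t}} U(t) : t \in \IR \}$ together with property $(\alpha)$ to run an operator-valued Mikhlin-type multiplier theorem --- this is the unpublished result of N.J.~Kalton \& L.~Weis --- obtaining $\norm{g(B)} \le C' \norm{f}_{\infty}$ uniformly and thus a bounded $H^{\infty}(\Sigma_{\sigma})$-calculus for every $\sigma > \theta$.

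The main obstacle is exactly this last step. The forward direction and the bijectivity are formal consequences of Theorem~\ref{thm:h_infty_generates_R_bounded_sets} and the calculus identities, but turning the $\mathcal{R}$-boundedness of the exponentially damped group into boundedness of the full $H^{\infty}$-calculus is where the geometry of $X$ enters in an essential way: plain boundedness of $(e^{-\theta \abs{t}} U(t))$ would only yield a calculus on a smaller function class, and it is the interplay of $\mathcal{R}$-boundedness with property $(\alpha)$ --- which converts $\mathcal{R}$-bounded operator families into bounded operator-valued Fourier multipliers --- that closes the argument and, as a byproduct, matches the $\mathcal{R}$-type $\theta$ with $\omega_{H^{\infty}}(A)$.
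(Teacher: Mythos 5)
Your proposal is correct and follows essentially the same route as the paper: the forward direction via Theorem~\ref{thm:h_infty_generates_R_bounded_sets} applied to the uniformly bounded family $e^{-\sigma'\abs{t}}\lambda^{it}$, the converse resting on the unpublished Kalton--Weis result (proved in~\cite[Theorem~6.5]{Haa11}) that $\mathcal{R}$-type $< \pi$ yields a bounded strip calculus, transferred to the sector through $\lambda = e^z$ as in \cite[Proposition~5.3.3]{Haa06}, and bijectivity exactly as in Theorem~\ref{thm:monniaux}. Your detour through Haase's BIP construction before invoking the multiplier result is a harmless redundancy, since the strip calculus for $B$ already gives sectoriality and the $H^{\infty}$-calculus of $e^{B}$ directly.
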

		\begin{proof}
			Let $A$ be a sectorial operator with a bounded $H^{\infty}$-calculus. Then it follows from Theorem~\ref{thm:h_infty_generates_R_bounded_sets} and the fact that the norm of $\lambda \mapsto \lambda^{it}$ in $H^{\infty}(\Sigma_{\sigma})$ is bounded by $\exp(\abs{t} \sigma)$ for $t \in \IR$ that $\{ e^{-\abs{t} \sigma} A^{it}: t \in \IR \}$ is $\mathcal{R}$-bounded for all $\sigma \in (\omega_{H^{\infty}}(A), \pi)$. In particular $(A^{it})_{t \in \IR}$ is of $\mathcal{R}$-type $< \pi$.
			 			
			Conversely, let $B$ be from the right hand side. It then follows from an unpublished result in~\cite{KalWei14} (see \cite[Theorem~6.5]{Haa11} for a proof, here one has to additionally use the equivalence of $\mathcal{R}$- and $\gamma$-boundedness for Banach spaces with finite cotype) that the $\mathcal{R}$-type assumption implies that $B$ has a bounded $H^{\infty}$-calculus on some strip of height smaller than $\pi$. By \cite[Proposition~5.3.3]{Haa06}, the operator $e^B$ is 	sectorial and has a bounded $H^{\infty}$-calculus.
			
			The one-to-one correspondence then follows as in the proof of Theorem~\ref{thm:monniaux}.	
		\end{proof}
		
		From the above theorems it follows immediately that on $L_p$ for $p \in (1,\infty) \setminus \{2\}$ there exist sectorial operators with bounded imaginary powers which do not have a bounded $H^{\infty}$-calculus.
		
		\begin{corollary}\label{cor:hinfty_correspondence}
			Let $p \in (1,\infty) \setminus \{2\}$. Then there exists a sectorial operator $A$ on $L_p(\IR)$ with $\omega(A) = \omega_{\mathrm{BIP}}(A) = 0$ which does not have a bounded $H^{\infty}$-calculus.
		\end{corollary}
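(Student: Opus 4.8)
The plan is to play the two correspondences of Theorem~\ref{thm:monniaux} and Theorem~\ref{thm:monniaux_for_hinfty} against each other. The space $L_p(\IR)$ is a UMD-space with Pisier's property $(\alpha)$, so both apply: a sectorial operator $A$ with $\omega_{\mathrm{BIP}}(A) < \pi$ corresponds to a strip type operator $B = \log A$ for which $iB$ generates the $C_0$-group $(A^{it})_{t \in \IR}$ of type $< \pi$, and $A$ additionally has a bounded $H^{\infty}$-calculus \emph{precisely} when that group is of $\mathcal{R}$-type $< \pi$. Since boundedness and $\mathcal{R}$-boundedness genuinely differ on $L_p(\IR)$ for $p \neq 2$, the strategy is to produce a $C_0$-group on $L_p(\IR)$ of type $0$ whose $\mathcal{R}$-type is nonetheless $\geq \pi$; passing it through Monniaux's correspondence then yields the asserted operator.

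For the group I would take the translation group $U(t)f = f(\,\cdot + t)$, a $C_0$-group of isometries and hence of type $0 < \pi$. Setting $B := \tfrac{1}{i}\tfrac{d}{dx}$, the generator of $(U(t))$ divided by $i$, the boundedness of the group makes $B$ a strip type operator of height $0$. By the surjectivity part of Theorem~\ref{thm:monniaux} there is then a sectorial operator $A = e^{B}$ with bounded imaginary powers satisfying $A^{it} = U(t)$ for all $t \in \IR$. As $(U(t))$ is isometric we obtain $\omega_{\mathrm{BIP}}(A) = 0$, and therefore $\omega(A) \le \omega_{\mathrm{BIP}}(A) = 0$, i.e.\ $\omega(A) = 0$.

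It remains to rule out a bounded $H^{\infty}$-calculus, which by the contrapositive of Theorem~\ref{thm:monniaux_for_hinfty} amounts to showing that $(U(t))$ is not of $\mathcal{R}$-type $< \pi$. I will in fact show the stronger statement that $\{U(t) : t \in [0,1]\}$ already fails to be $\mathcal{R}$-bounded, forcing the $\mathcal{R}$-type to be infinite. Fix a bump $g$ supported on an interval of length $\delta$ and take $n \approx 1/\delta$. For $p > 2$ choose disjointly supported translates $f_k = g(\,\cdot - c - k\delta)$ and times $t_k = k\delta \in [0,1]$, so that all $U(t_k)f_k$ collapse onto the single bump $g(\,\cdot - c)$; for $1 < p < 2$ take instead $f_k = g$ and $t_k = k\delta$, so that the $U(t_k)f_k$ land on disjoint supports. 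In both cases the disjointness of supports together with Khintchine's inequality yields
\[
	\frac{\norm{\sum_{k=1}^n r_k\, U(t_k) f_k}_{L_p([0,1];L_p)}}{\norm{\sum_{k=1}^n r_k\, f_k}_{L_p([0,1];L_p)}} \;\approx\; n^{\abs{1/p - 1/2}} \;\xrightarrow[n \to \infty]{}\; \infty ,
\]
with all times $t_k$ confined to $[0,1]$. Hence $\{U(t) : t \in [0,1]\}$, and a fortiori $\{e^{-\omega\abs{t}}U(t) : t \in \IR\}$ for every $\omega$, is not $\mathcal{R}$-bounded, so $(U(t))$ has $\mathcal{R}$-type $\infty$ and $A$ cannot possess a bounded $H^{\infty}$-calculus.

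The conceptual point to get right — and the main obstacle — is that it is the $\mathcal{R}$-type, not the ordinary type, of the group which must be large: an isometric group has type $0$, yet on $L_p(\IR)$ with $p \neq 2$ it can fail to be $\mathcal{R}$-bounded even on a compact time interval. The only real subtlety in the estimate above is to arrange the bad configuration so that the $t_k$ stay bounded; if one instead lets them escape to infinity, an exponential weight would tame the $\mathcal{R}$-bound and the argument would collapse. Keeping $t_k \in [0,1]$ is exactly what pushes the $\mathcal{R}$-type past every finite threshold, in particular past $\pi$. This is the continuous-line counterpart of the periodic shift that already appeared in Example~\ref{exp:bip_nohinfty}.
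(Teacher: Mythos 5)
Your proposal is correct and follows essentially the same route as the paper: the shift group on $L_p(\IR)$, Monniaux's correspondence (Theorem~\ref{thm:monniaux}) to produce $A$ with $\omega(A) \le \omega_{\mathrm{BIP}}(A) = 0$, and the failure of $\mathcal{R}$-boundedness of $\{U(t): t \in [0,1]\}$ combined with Theorem~\ref{thm:monniaux_for_hinfty} to exclude a bounded $H^{\infty}$-calculus. The only difference is that you carry out the Khintchine/disjoint-supports computation explicitly (correctly, in both ranges of $p$), where the paper simply cites \cite[Example~2.12]{KunWei04}.
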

		\begin{proof}
			Let $(U(t))_{t \in \IR}$ be the shift group on $L_p(\IR)$. It follows from the Khintchine inequality that $\{ U(t): t \in [0,1] \}$ is not $\mathcal{R}$-bounded~\cite[Example~2.12]{KunWei04}. By Theorem~\ref{thm:monniaux} there exists a sectorial operator $A$ with bounded imaginary powers such that $A^{it} = U(t)$ for all $t \in \IR$. Then one has $\omega(A) \le \omega_{\text{BIP}}(A) = 0$. However, by construction, $A^{it}$ is not $\mathcal{R}$-bounded on $[0,1]$ and therefore Theorem~\ref{thm:monniaux_for_hinfty} implies that $A$ cannot have a bounded $H^{\infty}$-calculus.
		\end{proof}
		
		Note that the constructed counterexample is exactly the same as in Example~\ref{exp:bip_nohinfty} which was obtained by different methods except for the fact that we worked in Example~\ref{exp:bip_nohinfty} with the periodic shift. Of course, we could have started with the same periodic shift in Corollary~\ref{cor:hinfty_correspondence}. 
			
		\subsection{Some Results on Exotic Banach Spaces}
		
			In this subsection we want to investigate shortly sectorial operators on exotic Banach spaces. In the past twenty years Banach spaces were constructed whose algebra of operators has an extremely different structure from those of the well-known classical Banach spaces. The most prominent examples are probably the hereditarily indecomposable Banach spaces.
			
			\begin{definition}[Hereditarily Indecomposable Banach Space (H.I.)]\index{H.I. space|see{hereditarily indecomposable Banach Space}}\index{hereditarily indecomposable Banach space}\index{Banach space!hereditarily indecomposable}
				A Banach space $X$ is called \emph{indecomposable} if it cannot be written as the sum of two closed infinite-dimensional subspaces. Further $X$ is called \emph{hereditarily indecomposable (H.I.)} if every infinite-dimensional closed subspace of $X$ is indecomposable.
			\end{definition}
			
			It is a deep result of B.~Maurey and T.~Gowers that such (separable) spaces do actually exist~\cite{GowMau93}. We are now interested in the properties of $C_0$-semigroups on such spaces. We will use the following theorem proved in~\cite[Theorem~2.3]{RaeRic96}.
			
			\begin{theorem}\label{thm:group_hi}
				Let $X$ be a H.I. Banach space. Then every $C_0$-group on $X$ has a bounded generator.
			\end{theorem}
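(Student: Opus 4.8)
The engine of the proof is the Gowers--Maurey structure theorem: on a complex hereditarily indecomposable space $X$ every bounded operator $T$ can be written uniquely as $T = \chi(T)\,\Id + S$ with $\chi(T) \in \IC$ and $S$ strictly singular. Thus $\mathcal{B}(X) = \IC\,\Id \oplus \mathcal{S}(X)$, where $\mathcal{S}(X)$ denotes the closed two-sided ideal of strictly singular operators, and $\chi\colon \mathcal{B}(X) \to \IC$ is a bounded unital algebra homomorphism with $\Ker \chi = \mathcal{S}(X)$. I will also use two standard facts about strictly singular operators: they are inessential, so that $\Id + S$ is Fredholm of index $0$ for every $S \in \mathcal{S}(X)$; and they are Riesz operators, so that the spectrum of any $S \in \mathcal{S}(X)$ is countable with $0$ as its only possible accumulation point, every nonzero spectral value being an eigenvalue of finite multiplicity.

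Let $B$ be the generator of the given $C_0$-group $(U(t))_{t \in \IR}$ and fix some $\lambda_0$ in the (nonempty) resolvent set $\rho(B)$; write $R \coloneqq R(\lambda_0, B) = c\,\Id + S$ with $c = \chi(R)$. The main case is $c \neq 0$. Here $R = c(\Id + c^{-1}S)$ is Fredholm of index $0$, since $c^{-1}S$ is strictly singular. On the other hand $R$ is injective, being the inverse of the injective operator $\lambda_0 - B$. An injective Fredholm operator of index $0$ is surjective, hence invertible in $\mathcal{B}(X)$; but $R(\lambda_0, B)$ is invertible precisely when its range $D(B)$ equals $X$, i.e.\ precisely when $B$ is bounded. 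This settles the theorem in the main case.

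It remains to exclude the degenerate case $c = 0$, i.e.\ $R(\lambda_0, B) \in \mathcal{S}(X)$. By the resolvent identity and the ideal property this forces $R(\lambda, B) \in \mathcal{S}(X)$ for all $\lambda \in \rho(B)$, so every resolvent is a Riesz operator; consequently $\sigma(B)$ is purely discrete, consisting of finite-multiplicity eigenvalues with no finite accumulation point, so that $\abs{\xi_n} \to \infty$ along any enumeration if $\sigma(B)$ is infinite. I would rule this out using the group structure. Since $\norm{U(t)} \le M e^{w\abs{t}}$ for suitable $M, w$, the spectrum lies in the strip $\{\abs{\Re z} \le w\}$, so the real parts of the $\xi_n$ stay bounded while $\abs{\Im \xi_n} \to \infty$. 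Writing $U(t) = \chi(U(t))\,\Id + S(t)$, the spectral inclusion $e^{t\xi_n} \in \sigma(U(t))$ together with $\sigma(U(t)) = \chi(U(t)) + \sigma(S(t))$ (again clustering at a single point) forces, for each fixed $t$, that $e^{t\xi_n}$ converges; separating modulus and argument shows $\Re \xi_n$ converges and $e^{it \Im \xi_n}$ converges for all $t$ in an interval, and a short integral estimate then forces $\Im \xi_n$ to converge --- contradicting $\abs{\Im \xi_n} \to \infty$. Hence $\sigma(B)$ is finite, and one argues similarly to exclude the remaining quasinilpotent behaviour of the resolvent, so that $c = 0$ cannot occur.

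The positive case $c \neq 0$ is entirely routine once the Gowers--Maurey decomposition is available; the genuine obstacle is the degenerate case $c = 0$, where one must exploit that $(U(t))$ is a \emph{group} and not merely a semigroup, in order to prevent the generator from having a discrete spectrum escaping to infinity inside a vertical strip (or an empty/quasinilpotent spectral part, which a nilpotent semigroup such as the truncated shift would otherwise permit).
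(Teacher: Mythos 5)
Your overall architecture is sound, and it is worth noting that it is necessarily independent of the paper, which does not prove this theorem at all but simply cites \cite[Theorem~2.3]{RaeRic96}; a self-contained argument is therefore a genuinely different route. Your main case is complete and correct: writing $R(\lambda_0,B) = c\Id + S$ with $S$ strictly singular, $c \neq 0$ makes the resolvent Fredholm of index $0$, injectivity then gives invertibility, hence $D(B) = \rg R(\lambda_0,B) = X$ and $B$ is bounded by the closed graph theorem. In the degenerate case $c = 0$, your treatment of an infinite spectrum is essentially right but needs one more line where you claim that $e^{t\xi_n} \in \sigma(U(t))$ ``forces $e^{t\xi_n}$ to converge'': a sequence in a set clustering only at $\chi(U(t))$ need not converge, since a single isolated value $\mu \neq \chi(U(t))$ could a priori recur infinitely often. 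This is ruled out because the $\xi_n$ are eigenvalues of $B$ (via the Riesz resolvent), eigenvectors of $B$ at distinct $\xi_n$ are linearly independent and satisfy $U(t)x_n = e^{t\xi_n}x_n$, and $\Ker(U(t)-\mu)$ is finite-dimensional since $U(t)-\mu = (\chi(U(t))-\mu)\Id + S(t)$ is Fredholm. With that inserted, your Riemann--Lebesgue estimate (convergence of $\int_a^b e^{it\Im\xi_n}\,dt$ to a nonzero-modulus limit versus decay of order $1/\abs{\Im\xi_n}$) does contradict $\abs{\Im \xi_n} \to \infty$, so $\sigma(B)$ must be finite.

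The genuine gap is the final step. ``One argues similarly to exclude the remaining quasinilpotent behaviour'' is not an argument, and your sequence technique cannot be adapted: once $\sigma(B)$ is finite (or empty) there are no eigenvalue sequences to feed into it. What is actually needed is: (i) the Riesz spectral projection $P$ for the finitely many finite-multiplicity eigenvalues is of finite rank and, being a function of the resolvent, commutes with the group, so $X_2 = \Ker P$ is an invariant closed complement which is infinite-dimensional (an H.I.\ space in the complex Gowers--Maurey setting is infinite-dimensional), and the restricted generator $B|_{X_2}$ has \emph{empty} spectrum; and (ii) the classical fact that the generator of a $C_0$-group on a nonzero Banach space has nonempty spectrum. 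Fact (ii) is false for semigroups (your truncated shift example) and its proof for groups is of a different nature from anything in your sketch: one uses the two Laplace-transform representations of $R(\lambda,B)$ on the half-planes $\Re\lambda > w$ and $\Re\lambda < -w$ together with a Phragm\'en--Lindel\"of/Liouville argument on the vertical strip (multiply by $e^{c\lambda^2}$ and let $c \to 0$), or alternatively the weak spectral mapping theorem for $C_0$-groups. This lemma is standard and citable, so your proof closes once it is invoked explicitly --- but as written the argument is incomplete precisely at the point you yourself identify as the genuine obstacle.
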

			
			The above result can be directly used to show the following result on operators with bounded imaginary powers.
			
			\begin{corollary}\label{cor:bip_hi}
				Let $A$ be a sectorial operator with bounded imaginary powers on a H.I. Banach space. Then $A$ is bounded.
			\end{corollary}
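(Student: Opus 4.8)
The plan is to exploit the bridge between bounded imaginary powers and $C_0$-groups together with the structural rigidity of H.I.\ spaces captured in Theorem~\ref{thm:group_hi}. Since $A$ is a sectorial operator with bounded imaginary powers, the family $(A^{it})_{t \in \IR}$ is a $C_0$-group on $X$ whose generator is $i \log A$, exactly as recalled right after the definition of BIP (see \cite[Corollary~3.5.7]{Haa06}). This $C_0$-group is precisely the kind of object that Theorem~\ref{thm:group_hi} constrains on a H.I.\ space, so the corollary should drop out by feeding this group into that theorem.

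Concretely, I would first invoke Theorem~\ref{thm:group_hi}: because $X$ is hereditarily indecomposable, every $C_0$-group on $X$ has a bounded generator. Applying this to $(A^{it})_{t \in \IR}$ immediately yields that $i \log A$, and hence $\log A$, is a bounded operator on $X$. The remaining task is to transfer this information back to $A$ itself.

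For that last step I would use the functional-calculus identity $A = e^{\log A}$ (cf.\ \cite[Corollary~4.2.5]{Haa06}, already exploited in the proof of Theorem~\ref{thm:monniaux}). Once $\log A$ is known to be an everywhere-defined bounded operator, its exponential $e^{\log A}$ is given by the everywhere-convergent power series $\sum_{n \ge 0} (\log A)^n / n!$ and is therefore bounded; combining this with $A = e^{\log A}$ gives the boundedness of $A$.

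The main obstacle — indeed essentially the only nonformal point — is this final transfer: one must be certain that the strip type operator $\log A$, a priori only defined via the holomorphic functional calculus, is the \emph{same} operator as the bounded generator produced by Theorem~\ref{thm:group_hi} (so that its boundedness is literal, not merely up to closure), and that the relation $A = e^{\log A}$ remains valid in this regime so that the bounded exponential genuinely recovers $A$. Everything preceding it is a direct chaining of the quoted results.
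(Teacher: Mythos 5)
Your proposal is correct and takes essentially the same route as the paper: both identify $(A^{it})_{t \in \IR}$ as a $C_0$-group with generator $i\log A$, invoke Theorem~\ref{thm:group_hi} to conclude that $\log A$ is bounded, and then recover $A = e^{\log A}$ as the (power-series) exponential of a bounded operator. The identification issues you flag at the end are precisely what the cited results \cite[Corollary~3.5.7, Corollary~4.2.5]{Haa06} settle, so no genuine gap remains.
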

			\begin{proof}
				Let $A$ be as in the assertion. Note that $(A^{it})_{t \in \IR}$ is a $C_0$-group with generator $i \log A$. By~Theorem~\ref{thm:group_hi} $\log A$ is a bounded operator. This implies that $e^{\log A} = A$ is bounded.
			\end{proof}
			
			In particular on H.I. Banach spaces the structure of sectorial operators with a bounded $H^{\infty}$-calculus is rather trivial.
			
			\begin{corollary}\label{cor:regularity_hi}
				Let $A$ be an invertible sectorial operator on a H.I. Banach space. Then the following assertions are equivalent.
				
				\begin{equiv_enum}
					\item\label{cor:regularity_hi_i} $A$ is a bounded operator.
					\item\label{cor:regularity_hi_ii} $A$ has bounded imaginary powers.
					\item\label{cor:regularity_hi_iii} $A$ has a bounded $H^{\infty}$-calculus.
				\end{equiv_enum}
			\end{corollary}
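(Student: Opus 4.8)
The plan is to prove the three assertions equivalent by closing the cycle \ref{cor:regularity_hi_i} $\Rightarrow$ \ref{cor:regularity_hi_iii} $\Rightarrow$ \ref{cor:regularity_hi_ii} $\Rightarrow$ \ref{cor:regularity_hi_i}. Two of these arrows are already at hand and require no H.I.\ structure. The implication \ref{cor:regularity_hi_iii} $\Rightarrow$ \ref{cor:regularity_hi_ii} is the general fact recorded above that a bounded $H^{\infty}$-calculus forces bounded imaginary powers with $\omega_{\mathrm{BIP}}(A) \le \omega_{H^{\infty}}(A)$, valid on any Banach space. The implication \ref{cor:regularity_hi_ii} $\Rightarrow$ \ref{cor:regularity_hi_i} is precisely Corollary~\ref{cor:bip_hi}, where the hereditary indecomposability enters (through Theorem~\ref{thm:group_hi}) to turn the $C_0$-group $(A^{it})_{t \in \IR}$ into one with a bounded generator. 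Thus the only implication demanding a genuine argument is \ref{cor:regularity_hi_i} $\Rightarrow$ \ref{cor:regularity_hi_iii}.

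For \ref{cor:regularity_hi_i} $\Rightarrow$ \ref{cor:regularity_hi_iii} I would use that boundedness together with the standing invertibility hypothesis confines the spectrum. Since $A$ is bounded, $\sigma(A)$ is compact; since $A$ is invertible, $0 \notin \sigma(A)$; and sectoriality places $\sigma(A)$ inside $\overline{\Sigma_{\omega(A)}}$. Hence for any $\sigma \in (\omega(A), \pi)$ the set $\sigma(A)$ is a compact subset of $\Sigma_{\sigma} \setminus \{0\}$. The idea is then to replace the unbounded defining contour $\partial \Sigma_{\sigma'}$ in $f(A) = \frac{1}{2\pi i} \int_{\partial \Sigma_{\sigma'}} f(\lambda) R(\lambda, A) \, d\lambda$, valid for $f \in H_0^{\infty}(\Sigma_{\sigma})$, by a bounded positively oriented Jordan contour $\Gamma \subset \Sigma_{\sigma}$ encircling $\sigma(A)$ exactly once. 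By Cauchy's theorem and the decay of $f$ at $0$ and $\infty$ the value $f(A)$ is unchanged, so the $H_0^{\infty}$-calculus coincides with the ordinary Dunford--Riesz calculus of the bounded operator $A$. Estimating along $\Gamma$ then gives
\[ \norm{f(A)} \le \frac{1}{2\pi} \operatorname{length}(\Gamma) \Bigl( \sup_{\lambda \in \Gamma} \norm{R(\lambda, A)} \Bigr) \norm{f}_{H^{\infty}(\Sigma_{\sigma})}. \]
Because $\Gamma$ is a compact subset of the resolvent set, the supremum is finite and independent of $f$, so $f \mapsto f(A)$ is bounded on $H_0^{\infty}(\Sigma_{\sigma})$; that is exactly a bounded $H^{\infty}(\Sigma_{\sigma})$-calculus.

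The main obstacle, such as it is, lies entirely in this last step, and it is bookkeeping rather than analysis: one has to check that bounded-plus-invertible genuinely confines $\sigma(A)$ to an annular compact region of the sector, and that the integral over the bounded contour $\Gamma$ agrees with the original definition on $H_0^{\infty}(\Sigma_{\sigma})$. Both are routine consequences of the holomorphy of the resolvent on the resolvent set and of the growth bound built into the definition of $H_0^{\infty}(\Sigma_{\sigma})$; no delicate estimate or use of the H.I.\ geometry is required here. Assembling the three implications then yields the claimed equivalence.
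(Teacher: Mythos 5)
Your proof is correct and follows essentially the same route as the paper: the same cycle \ref{cor:regularity_hi_i} $\Rightarrow$ \ref{cor:regularity_hi_iii} $\Rightarrow$ \ref{cor:regularity_hi_ii} $\Rightarrow$ \ref{cor:regularity_hi_i}, with the last two arrows handled exactly as in the paper (general theory plus Corollary~\ref{cor:bip_hi}). The only difference is that the paper dismisses \ref{cor:regularity_hi_i} $\Rightarrow$ \ref{cor:regularity_hi_iii} as easily verified, while you correctly spell out the standard verification (invertibility and boundedness confine $\sigma(A)$ to a compact subset of the sector, so the $H_0^{\infty}$-integral collapses to a Dunford--Riesz integral over a bounded contour, yielding the norm bound).
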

			\begin{proof}
				The implication \ref{cor:regularity_hi_i} $\Rightarrow$ \ref{cor:regularity_hi_iii} can easily directly be verified and holds for every Banach space, \ref{cor:regularity_hi_iii} $\Rightarrow$ \ref{cor:regularity_hi_ii} also holds on every Banach space as discussed before and \ref{cor:regularity_hi_ii} $\Rightarrow$ \ref{cor:regularity_hi_i} follows from Corollary~\ref{cor:bip_hi}.
			\end{proof}

			Note that since every Banach space contains a basic sequence~\cite[Corollary~1.5.3]{AlbKal06}, there exist H.I. Banach spaces that admit Schauder bases.  Then by Proposition~\ref{prop:sm_generator_semigroup} on these spaces there exist semigroups with unbounded generators which cannot have bounded imaginary powers. In particular the structure of semigroups on these spaces is not trivial. We do not know how $\mathcal{R}$-sectoriality behaves in these spaces.

	\section{Counterexamples III: Pisier's Counterexample to the Halmos Problem}\label{sec:pisier}
	
		We now present a counterexample to the last implication left open, namely that there exists a $C_0$-semigroup with generator $-A$ and $\omega_{H^{\infty}}(A) = \frac{\pi}{2}$ which does not have a loose dilation. The key ingredient here is Pisier's counterexample to the Halmos problem~\cite{Pis97} (for a more elementary approach see~\cite{DavPau97}). He constructed a Hilbert space $H$ and an operator $T \in \mathcal{B}(H)$ that is polynomially bounded, i.e.\ for some $K \ge 0$ one has $\norm{p(T)} \le K \sup_{\abs{z} \le 1} \abs{p(z)}$ for all polynomials $p$, but is not similar to a contraction, i.e.\ there does not exist any invertible $S \in \mathcal{B}(H)$ such that $S^{-1}TS$ is a contraction.
		
		\begin{theorem}\label{thm:semigroup_hinfty_no_dilation}
			There exists a generator $-A$ of a $C_0$-semigroup $(T(t))_{t \ge 0}$ on some Hilbert space with $\omega_{H^{\infty}}(A) = \frac{\pi}{2}$ such that $(T(t))_{t \ge 0}$ does not have a loose dilation in the class of all Hilbert spaces.
		\end{theorem}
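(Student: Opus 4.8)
The plan is to realise Pisier's operator $T$ as the cogenerator of the sought semigroup and to translate the discrete dichotomy (polynomially bounded versus similar to a contraction) into the continuous one ($H^{\infty}$-calculus versus loose dilation) by means of the Cayley transform $m(z) = \frac{1+z}{1-z}$, which maps the unit disk $\mathbb{D}$ conformally onto the right half-plane $\Sigma_{\pi/2}$. Concretely, let $T$ be Pisier's polynomially bounded operator on a Hilbert space $H$ which is not similar to a contraction; after a harmless reduction we may assume that $1$ is not an eigenvalue of $T$ or $T^{*}$ and that $T$ is absolutely continuous, so that $A \coloneqq (I+T)(I-T)^{-1}$ defines a sectorial operator for which $-A$ generates a bounded $C_0$-semigroup $(T(t))_{t \ge 0}$ whose cogenerator is $T$ and whose spectrum lies in $\overline{\Sigma_{\pi/2}}$.

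First I would establish the upper bound $\omega_{H^{\infty}}(A) \le \frac{\pi}{2}$. Since $T$ is polynomially bounded and absolutely continuous, its polynomial calculus extends to a bounded $H^{\infty}(\mathbb{D})$-calculus. For $\sigma > \frac{\pi}{2}$ and $f \in H^{\infty}(\Sigma_{\sigma})$ the function $f \circ m$ lies in $H^{\infty}(\mathbb{D})$ with $\norm{f \circ m}_{H^{\infty}(\mathbb{D})} \le \norm{f}_{H^{\infty}(\Sigma_{\sigma})}$, and the composition rule for the two calculi gives $f(A) = (f \circ m)(T)$. Hence $\norm{f(A)} \le C \norm{f}_{H^{\infty}(\Sigma_{\sigma})}$, so that $A$ has a bounded $H^{\infty}(\Sigma_{\sigma})$-calculus for every $\sigma > \frac{\pi}{2}$.

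The heart of the argument is to rule out a loose dilation of $(T(t))_{t \ge 0}$ in the class of Hilbert spaces. Assume there were one, say $T(t) = QU(t)J$ with bounded $J \colon H \to K$, $Q \colon K \to H$ and a bounded $C_0$-group $(U(t))_{t \in \IR}$ on a Hilbert space $K$. A bounded $C_0$-group on a Hilbert space is similar to a unitary one, so we may take $(U(t))$ unitary and write $A_U$ for its skew-adjoint sectorial generator, whose $H^{\infty}(\Sigma_{\pi/2})$-calculus is completely contractive by the spectral theorem. On the left rays of $\partial \Sigma_{\sigma'}$ the resolvent of $A$ is a Laplace transform of the semigroup, whence $R(\lambda, A) = Q R(\lambda, A_U) J$; inserting this into the Cauchy integral yields $f(A) = Q\, f(A_U)\, J$ for all $f \in H_0^{\infty}(\Sigma_{\sigma'})$. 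Because every bounded operator between Hilbert spaces is automatically completely bounded with $\norm{\cdot}_{\mathrm{cb}} = \norm{\cdot}$, amplifying this identity shows that the $H_0^{\infty}$-calculus of $A$ is completely bounded, and the convergence lemma propagates the completely bounded estimate to the full bounded $H^{\infty}$-calculus. Transporting back through $m$ and using that $\sigma(A) \subset \overline{\Sigma_{\pi/2}}$, one obtains $\norm{p(T)}_{\mathrm{cb}} = \norm{(p \circ m^{-1})(A)}_{\mathrm{cb}} \le C \norm{p}_{\infty}$ for every polynomial $p$; that is, $T$ is completely polynomially bounded. By Paulsen's similarity theorem $T$ would then be similar to a contraction, contradicting Pisier. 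Hence $(T(t))_{t \ge 0}$ has no loose dilation in the class of Hilbert spaces.

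Finally the precise angle drops out: were $\omega_{H^{\infty}}(A) < \frac{\pi}{2}$, the theorem of Fröhlich and Weis would furnish a loose dilation of $(T(t))$ into $L_2([0,1]; H)$, again a Hilbert space, contradicting the previous step. Together with the upper bound this forces $\omega_{H^{\infty}}(A) = \frac{\pi}{2}$. The main obstacle is the third step: mere boundedness of the $H^{\infty}$-calculus (equivalently, polynomial boundedness of $T$) is fully consistent with Pisier's example, so the contradiction can only come from \emph{upgrading} the transferred estimate to a completely bounded one. This is exactly where one must run the transference at the matricial level and exploit that on Hilbert spaces the gap between bounded and completely bounded maps collapses, and where Paulsen's characterisation linking complete polynomial boundedness to similarity to a contraction becomes indispensable.
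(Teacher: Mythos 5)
Your proposal is correct, and it shares the paper's skeleton: Pisier's polynomially bounded $T$ not similar to a contraction, the Cayley transform $A = (I+T)(I-T)^{-1}$ with $-A$ generating a bounded semigroup whose cogenerator is $T$ (Le Merdy), unitarization of the dilating group, a completely bounded transference estimate extracted from the dilation identity, and Paulsen's similarity theorem to reach the contradiction. The execution of the transference step, however, is genuinely different. The paper never passes through the $H^{\infty}$-calculus of the dilated group: it introduces the unital subalgebra $\mathcal{A} \subset L_{\infty}([0,\infty))$ generated by the exponentials $e^{-it\cdot}$, shows by Stone's theorem and a direct matricial computation with finite sums $\sum_k a_k^{(ij)} e^{-it_k\cdot}$ that the homomorphism $u\colon e^{-it\cdot} \mapsto T(t)$ is completely bounded with constant $\norm{Q}\norm{J}$, applies Paulsen's theorem on completely bounded homomorphisms to make the \emph{semigroup} simultaneously similar to a semigroup of contractions, and only then transfers the similarity to $T$ via the cogenerator correspondence from Sz.-Nagy--Foias. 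You instead push the dilation identity through the Laplace transform to get $R(\lambda,A) = Q R(\lambda,A_U) J$ on the open left half-plane, hence $f(A) = Q f(A_U) J$ for $f \in H_0^{\infty}(\Sigma_{\sigma'})$ via the Cauchy integral, amplify, and pull back through the Cayley map to conclude that $T$ is \emph{completely polynomially bounded}, closing with Paulsen's criterion applied directly to $T$; you also obtain the lower bound $\omega_{H^{\infty}}(A) \ge \frac{\pi}{2}$ by a self-referential use of the Fr\"ohlich--Weis theorem, where the paper simply has $\omega_{H^{\infty}}(A) \ge \omega(A) = \frac{\pi}{2}$ from Le Merdy's Proposition~4.8. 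What each buys: your route avoids the cogenerator-similarity transfer and the operator space structure on $\mathcal{A}$, but needs the composition rule $p(T) = (p \circ m^{-1})(A)$ for the extended calculus and a boundary argument (an $M_n$-valued maximum principle on the sector, or letting $\sigma \downarrow \frac{\pi}{2}$) to convert the sector estimate into the disk norm; the paper's computation with finite exponential sums is more elementary on the function-theoretic side.

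Three small repairs. First, the slogan \textquotedblleft every bounded operator between Hilbert spaces is completely bounded\textquotedblright{} is not what your argument actually uses (nor is it meaningful as stated); what you use is only that multiplication by the fixed operators is completely bounded, i.e.\ $\normalnorm{\diag(Q)\,[f_{ij}(A_U)]\,\diag(J)} \le \norm{Q}\norm{J}\,\normalnorm{[f_{ij}(A_U)]}$, combined with complete contractivity of the calculus of the normal operator $A_U$ via the spectral theorem. Second, \textquotedblleft after a harmless reduction\textquotedblright{} undersells the point that sectoriality of $A$ and boundedness of the generated semigroup genuinely depend on the concrete Foguel--Hankel structure of Pisier's $T$; this is exactly the content of Le Merdy's Proposition~4.8, which the paper cites for this purpose. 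Third, $A_U$ need not be injective (its generator may have kernel, corresponding to $m = 0$ in the spectral representation), so $f(A_U)$ should be read as the Cauchy integral $\frac{1}{2\pi i}\int_{\partial\Sigma_{\sigma'}} f(\lambda)R(\lambda,A_U)\,d\lambda$, which vanishes on the kernel part for $f \in H_0^{\infty}$; the estimate survives unchanged.
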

		\begin{proof}
			Let $T$ and $H$ be as above from Pisier's counterexample to the Halmos problem. It is explained in~\cite[Proposition~4.8]{Mer98} that the concrete structure of $T$ allows one to define $A = (I+T)(I-T)^{-1}$ which turns out to be a sectorial operator with $\omega(A) = \frac{\pi}{2}$. Moreover, it is shown that $-A$ generates a bounded $C_0$-semigroup $(T(t))_{t \ge 0}$ on $H$. Further, it follows from the polynomial boundedness of $T$ with a conformal mapping argument that $A$ has a bounded $H^{\infty}$-calculus with $\omega_{H^{\infty}}(A) = \frac{\pi}{2}$~\cite[Remark~4.4]{Mer98}. Now assume that $(T(t))_{t \ge 0}$ has a loose dilation in the class of all Hilbert spaces. Then it follows from Dixmier's unitarization theorem \cite[Theorem~9.3]{Pau02} that $(T(t))_{t \ge 0}$ has a loose dilation to a unitary $C_0$-group $(U(t))_{t \in \IR}$ on some Hilbert space $K$, i.e.\ there exist bounded operators $J\colon H \to K$ and $Q\colon K \to H$ such that
				\[
					T(t) = QU(t)J \qquad \text{for all } t \ge 0.
				\] 
			Now let $\mathcal{A}$ be the unital subalgebra of $L_{\infty}([0, \infty))$ generated by the functions $x \mapsto e^{-itx}$ for $t \ge 0$, where we identify elements in $L_{\infty}([0,\infty))$ with multiplication operators on the Hilbert space $L_2([0,\infty))$. This gives $\mathcal{A}$ the structure of an operator space. We now show that the algebra homomorphism
				\begin{align*}
					u\colon \mathcal{A} \to \mathcal{B}(H), \qquad e^{-it\cdot} \mapsto T(t)
				\end{align*} 
			is completely bounded with respect to this operator space structure for $\mathcal{A}$. Indeed, observe that by Stone's theorem on unitary groups and the spectral theorem for self-adjoint operators there exists a measure space $\Omega$ and a measurable function $m\colon \Omega \to \IR$ such that after unitary equivalence $U(t)$ is the multiplication operator with respect to the function $e^{-itm}$ for all $t \in \IR$. Now for $n \in \IN$ let $[f_{ij}] \in M_n(\mathcal{A})$ with $f_{ij} = \sum_{k=1}^N a^{(ij)}_k e^{-i t_k \cdot}$. Then one has
				\begin{align*}
					\MoveEqLeft \norm{u_n([f_{ij}])}_{M_n(\mathcal{B}(X))} = \biggnorm{\big[\sum_{k=1}^N a_k^{(ij)} T(t_k) \big]}_{M_n(\mathcal{B}(X))} \\
					& = \biggnorm{\big[ Q \sum_{k=1}^N a_k^{(ij)} U(t_k) J \big]}_{M_n(\mathcal{B}(X))} \le \norm{Q} \norm{J} \biggnorm{\big[ \sum_{k=1}^N a_k^{(ij)} e^{-i t_k m} \big]}_{M_n(\mathcal{B}(L_2(\Omega)))} \\
					& \le \norm{J} \norm{Q} \sup_{x \in \IR} \biggnorm{\big[ \sum_{k=1}^N a_k^{(ij)} e^{-it_k x} \big]}_{M_n} = \norm{J} \norm{Q} \norm{[f_{ij}]}_{M_n(L_{\infty}[0, \infty))}.
				\end{align*}
			Here we have used the identification of the $C^*$-algebras $M_n(L_{\infty}(\Omega)) \simeq L^{\infty}(\Omega; M_n)$ for all $n \in \IN$. We deduce from Theorem \cite[Theorem~9.1]{Pau02} that $(T(t))_{t \ge 0}$ is similar to a semigroup of contractions. However, since by construction $T$ is the cogenerator of $(T(t))_{t \ge 0}$, this holds if and only if $T$ is similar to a contraction~\cite[III,8]{SFBK10}. This is a contradiction to our choice of $T$.
		\end{proof}
		
		\printbibliography
		
\end{document}